\newtheorem{lemma}{Lemma}[section]
\newtheorem{theorem}[lemma]{Theorem}
\begin{document}
\title[Turbulent solutions of the binormal flow and the 1D cubic NLS]{Turbulent solutions of the binormal flow and\\ the 1D cubic Schr\"odinger equation}
\author[V. Banica]{Valeria Banica}
    \address[V. Banica]{Sorbonne Universit\'e, Universit\'e Paris Cit\'e, CNRS, INRIA, Laboratoire Jacques-Louis Lions, LJLL, F-75005 Paris, France, Valeria.Banica@sorbonne-universite.fr} 
\author[L. Vega]{Luis Vega}
\address[L. Vega]{BCAM-UPV/EHU Department of Mathematics, University of Basque Country, Apdo 644, 48080 Bilbao, Spain, luis.vega@ehu.es} 
\date{\today}

\maketitle

\begin{abstract}
In the last three decades there has been an intense activity on the exploration of turbulent phenomena of dispersive equations, as for instance the growth of Sobolev norms since the work of Bourgain in the 90s. In general the 1D cubic Schr\"odinger equation has been left aside because of its complete integrability. In a series of papers of the last six years  that we survey here for the special issue of the ICMP 2024 (\cite{BanicaVega2020},\cite{BanicaVega2020bis},\cite{BanicaVega2022},\cite{BanicaVega2022bis},\cite{BanicaVega2024},\cite{BanicaEceizabarrenaNahmodVega2024},\cite{BanicaLucaTzvetkovVega2024}), we considered, together with the 1D cubic Schr\"odinger equation, the binormal flow, which is a geometric flow explicitly related to it. We displayed rigorously a large range of complex behavior as creation of singularities and unique continuation, Fourier growth, Talbot effects, intermittency and multifractality, justifying in particular some previous numerical observations. To do so we constructed a class of well-posedness for the 1D cubic Schr\"odinger equation included in the critical Fourier-Lebesgue space $\mathcal FL^\infty$ and in supercritical Sobolev spaces with respect to scaling. Last but not least we recall that the binormal flow is a classical model for the dynamics of a vortex filament in a 3D fluid or superfluid, and that vortex motions are a key element of turbulence. 
\end{abstract}

\date\today
\maketitle

\tableofcontents

\section{Introduction}

\subsection{Vortex filaments dynamics and the binormal flow}
The evolution of vortex filaments is a key element in fluid and superfluid turbulence.  
We consider the classical model of the binormal flow (BF, or LIA from ``local induction approximation" or VFE from ``vortex filament equation"), which is the formally derived model for one vortex filament dynamics in a 3D fluid governed by Euler equations, also used in superfluids. If the vorticity at time $t$ is concentrated along an arclength-parametrized curve $\chi(t)$ in $\mathbb R^3$, its evolution in time is expected to evolve according to the binormal flow:
\begin{equation}\label{VFE}\chi_t=\chi_x\wedge\chi_{xx}.
\end{equation}
By using the Frenet system (see Appendix \ref{section-Hasimoto}), that characterizes, in terms of the curvature and torsion, the tangent and normal vectors, together with their vectorial product which is called the binormal vector, the equation writes as:
$$\chi_t=c\,b,$$
where $c$ stands for the curvature and $b$ for the binormal vector. This explains the binormal flow name.  The model was derived formally by using Biot-Savart integral by Da Rios in 1906 following the works of Levi-Civita (\cite{DaRios1906}). This was justified rigorously by the ``if" theorem of Jerrard and Seis in 2017 (\cite{JerrardSeis2017}). More precisely, under the assumption that the vorticity is concentrated
along a smooth curve $\tilde\chi(t)$ in $\mathbb R^3$ for all times $t\in[0,T]$ for some $T$, and for initial velocity field having controlled excess of the kinetic energy relative to the initial curve, they prove that $\tilde\chi(t)$ evolves to leading order
by binormal curvature flow. The proof relies on estimates on the
distance of the Hamiltonian-Poisson structures of Euler equation and of the binormal flow, together with stability
estimates obtained previously by Jerrard and Smets (\cite{JerrardSmets2015}).
Understanding when the vorticity propagates its initial structure of being concentrated along a curve is still a very difficult open problem. 

We shall recall here only a representative few advances in this direction in the last five years. Concerning the Navier-Stokes equation the Cauchy problem was proved by Bedrossian, Germain and Harrop-Griffiths (\cite{BedrossianGermainHarropGriffiths2023}) to be locally well-posed for 
an initial filament data  with no symmetry assumptions, but for small times not allowing to observe the binormal flow and to pass to the vanishing viscosity limit.  Vanishing viscosity limit was proved by Gallay and Sverak (\cite{GallaySverak2024}) for the particular case of axisymmetric vortex rings. The binormal flow dynamic was recovered by Fontelos and Vega (\cite{FontelosVega2023}) for Giga-Miyakawa solutions with initial filament data with no symmetry assumptions, but this regime does not allow for passing to the vanishing viscosity limit for positive times. 
In Euler equations Donati, Lacave and Miot (\cite{DonatiLacaveMiot2024}), and previously D\'avila, Del Pino, Musso and Wei (\cite{DavilaDelPinoMussoWei2022}) and Cao and Wan (\cite{CaoWan2024}) by other methods, constructed solutions with vorticity concentrated on helices, which are particular solutions with helical symmetry of the binormal flow. Thus we are still facing a gap towards the case of no-symmetry vortex filaments in Euler equations.

\subsection{Links between the binormal flow and Schr\"odinger equations} We shall present now the link between the binormal flow and Schr\"odinger equations that is at the basis of our results. A detailed presentation is done in \S \ref{section-Hasimoto}. 
It is easy to see that if $\chi$ is a binormal flow solution then its tangent vector $T = \chi_x$ satisfies
\begin{equation}\label{Smap}T_t=T_x\wedge T_x+T\wedge T_{xx}=T\wedge T_{xx}.\end{equation}
This is the 1D Schr\"odinger map equation with values in the sphere $\mathbb S^2$, that coincides with the 1D Heisenberg continuous model derived in ferro-magnetic theory by Landau and Lifshitz in 1935 (\cite{LandauLifshitz1935}). 
Moreover, solutions of the Schr\"odinger map, and thus corresponding solutions of the binormal flow equation, are related, via the Hasimoto transformation, to the 1D 
focusing cubic Schr\"odinger equation (NLS) on the line:
\begin{equation}\label{CubicNLS}
iu_t+u_{xx}+ |u|^2u=0. 
\end{equation} 
Indeed Hasimoto discovered in 1972 (\cite{Hasimoto1972}) the following transform based on the Frenet system of curves given by tangent, normal and binormal vector. If $\chi$ is a binormal flow solution with non-vanishing curvature, it is easy to show\footnote{This is due to the fact that if $\chi(t,x)$ is a binormal flow solution then, by computing and identifying the crossed second derivatives of the tangent and the normal vector $T_{tx}=T_{xt},n_{tx}=n_{xt}$, it follows that its curvature and torsion $(c,\tau)$ satisfy the system (called intrinsic equations):
$$\left\{\begin{array}{ll}c_t=-2c_x\,\tau-c\,\tau_x,\\ 
\tau_t=\left(\frac{c_{xx}-c\,\tau^2}{c}+\frac {c^2}2\right)_x. \end{array}\right.$$
} that the following function, called filament function:
$$u(t,x):=c(t,x)e^{i\int_0^x\tau(t,s)ds},
$$
where $c,\tau$ are the curvature and torsion of the curve, satisfies the 1D Schr\"odinger equation
 \begin{equation}\label{Hasimoto}
iu_t+u_{xx}+\left(|u|^2-f\right)u=0,
\end{equation}
where $f$ is a space independent function determined by $(c,\tau)(t,0)$. The Hasimoto transform is thus assigning a solution of 1D Schr\"odinger equation to a solution of the binormal flow. 
In particular Hasimoto's transform can be seen as an inverse Madelung transform\footnote{From the intrinsic equations it follows that if $\chi$ solves the binormal flow then $c^2$ and $2\tau$ satisfy a Euler-Korteweg type equation. Thus Hasimoto's transform assigns to such Euler-Korteweg solutions a solution of 1D cubic Schr\"odinger equation. In particular, by a change of phase, Hasimoto's transform assigns a solution of \eqref{Hasimoto} with potential $f=1$, that is Gross-Pitaevskii equation.
Conversely, Madelung transform ensures that if $u$ is a solution of Gross-Pitaesvskii equation, then introducing $\rho,v$ such that $u(t,x)=\sqrt{\rho(t,x)}e^{i\theta(t)}e^{i\int_0^x v(t,s)ds}$, i.e. $\rho:=|u|^2$ and $v:=2\nabla_x Arg\, u$, then $(\rho,v)$ solve an Euler type equation with the extra quantum pressure, i.e. a Euler-Korteweg type equation.}. Conversely, for any function $f$ depending only on the time variable, for instance $f\equiv 0$, from a smooth solution $u$ of \eqref{Hasimoto}, Hasimoto gave a method to construct frames whose first vector is a solution of \eqref{Smap} and thus a solution of the binormal flow \eqref{VFE}. 

The non-vanishing curvature condition was removed by Koiso in 1997 (\cite{Koiso1997}) by considering instead of the filament function the complexified normal developement of the curve given by the coefficients appearing in the derivatives of parallel transport frames. 
For these frames the other vectors than the tangent are relatively parallel in the sense that their variation is in the direction of the tangent vector, see Bishop's article "There are more than one way to frame a curve" (\cite{Bishop1975}). In \S \ref{section-Hasimoto} we shall present in a detailed way the Hasimoto construction in this parallel transport frames framework. 

We note that using Hasimoto's approach one may try to find binormal flow solutions that generate singularities in finite time by considering smooth solutions of the 1D cubic Schr\"odinger equation that generate singularities in finite time. However, half of the job is to find a 1D cubic NLS solution with some precise description, smooth for example on $t>0$ and generating a singularity at $t=0$, and half of the job is to describe geometrically the associated binormal flow solution on $t>0$ in order to understand its behavior at $t=0$.

\subsection{Self-similar type solutions generating one singularity}\label{section-ssbf}
An important class of solutions of the binormal flow are the self-similar solutions, that are smooth curves which develop in finite time a singularity in the shape of a corner. More precisely, arclength parametrized curves solutions of binormal flow are preserved by the rescaling $\lambda^{-1}\chi(\lambda^{2} t,\lambda x)$. Therefore self-similar solutions of binormal flow are searched as
$\chi(t,x)=\sqrt{t}G(\frac x{\sqrt{t}})$. The profile curve $G(s)=\chi(1,s)$ and its Frenet frame satisfy:
$$\frac{G(s)}2-\frac{sG_s(s)}{2}=G_s\wedge G_{ss}\Rightarrow -\frac{sT_s(s)}{2}=T\wedge T_{ss}\Rightarrow -\frac{scn}2=T\wedge(c_sn-c^2T+c\tau b),$$
so this determines the curvature and torsion of the profile, $\tau(s)=\frac s2,c_s=0$. Going back to the self-similar variables, the self-similar solutions form a 1-parameter family $\{\chi_a\}_{a\in\mathbb R^{*+}}$ with curvature $c_a(t,x)=\frac a{\sqrt{t}}$ and torsion $\tau_a(t,x)=\frac x{2t}$. Note that the filament function is $a\frac{e^{i\frac{x^2}{4t}}}{\sqrt{t}}$ with initial value  $a\delta_0$, and it satisfies  \eqref{Hasimoto} with $f(t)=\frac{a^2}{t}$.
These solutions were known and used by physicists from the 80s in the framework of reconnection of vortex filaments in ferromagnetics and superfluids (Schwarz \cite{Schwarz1985}, Lakshmanan and Daniel \cite{LakshmananDaniel1981}, Buttke \cite{Buttke1988}). They were rigorously studied by Guti\'errez, Rivas and Vega in 2003 (\cite{GutierrezRivasVega2003}), who proved that a corner is generated at $t=0$ and the tangent vector of the profile curve $G_a$ has a limit at infinity:
$$\exists A^\pm_{a}\in\mathbb S^2,\quad T_a(1,x)\overset{x\rightarrow\pm\infty}{\longrightarrow} A^\pm_a.$$ 
Moreover, they 
proved that the value $\theta_a$ of the angle of the corner is related to the parameter $a$ by the nonlinear formula:
\begin{equation}\label{angle}
\sin\frac {\theta_a} 2=e^{-\frac{a^2}{2}}.
\end{equation}
This type of dynamics can be observed in fluids passing over a triangular obstacle and in fluid and superfluid vortex reconnection, see Figure 1. We also recall that the Constantin-Fefferman-Majda blow-up criterium (\cite{ConstantinFeffermanMajda1996}) on the variation of the direction of vorticity writes  $\int_0^t\|\nabla(\frac{\omega}{|\omega|})(\tau)\|_{L^\infty_x}^2d\tau=\infty$. Here, having in mind that the tangent vector models the direction of vorticity, see also evidence in this sense in Theorem 1 in \cite{FontelosVega2023}, we have 
$$\int_0^t\|\partial_x T_a(\tau)\|_{L^\infty}^2d\tau=\int_0^t\|c_a(\tau)\|_{L^\infty}^2d\tau=\int_0^t\frac {a^2}{\tau}d\tau=\infty.$$

\begin{center}
 \includegraphics[height=1.1in]{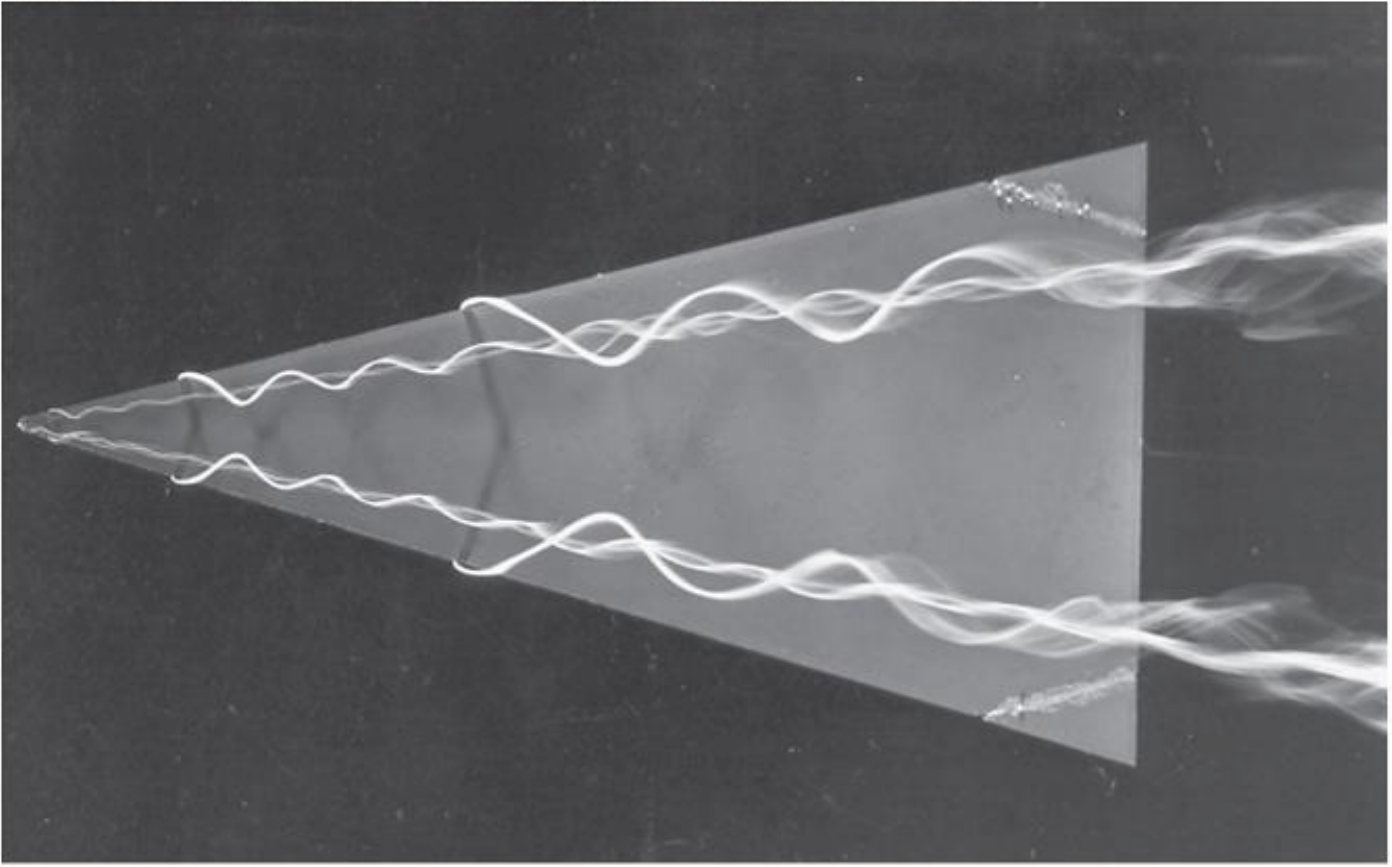} \quad\quad  \includegraphics[height=1.1in]{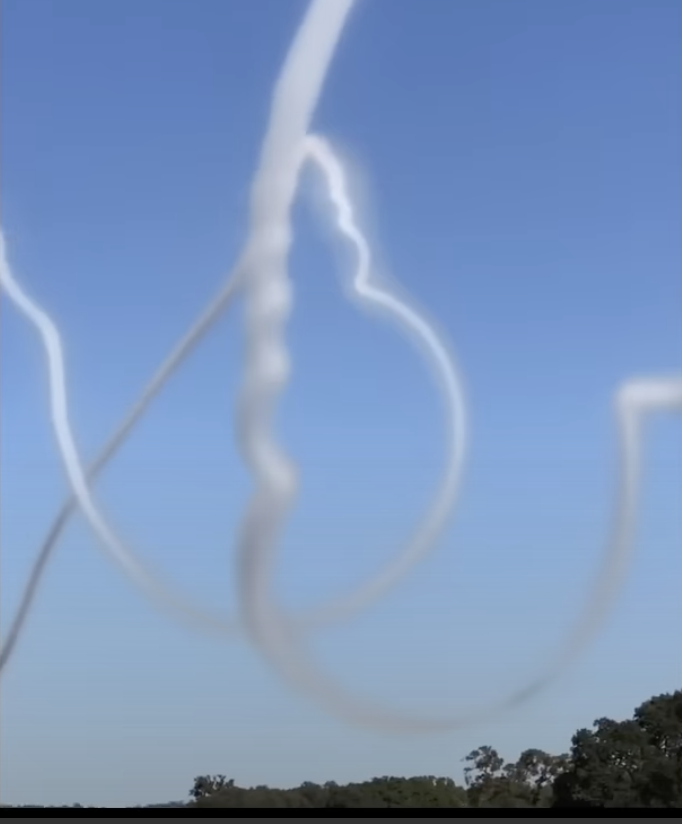}\quad\quad  \includegraphics[height=1.1in]{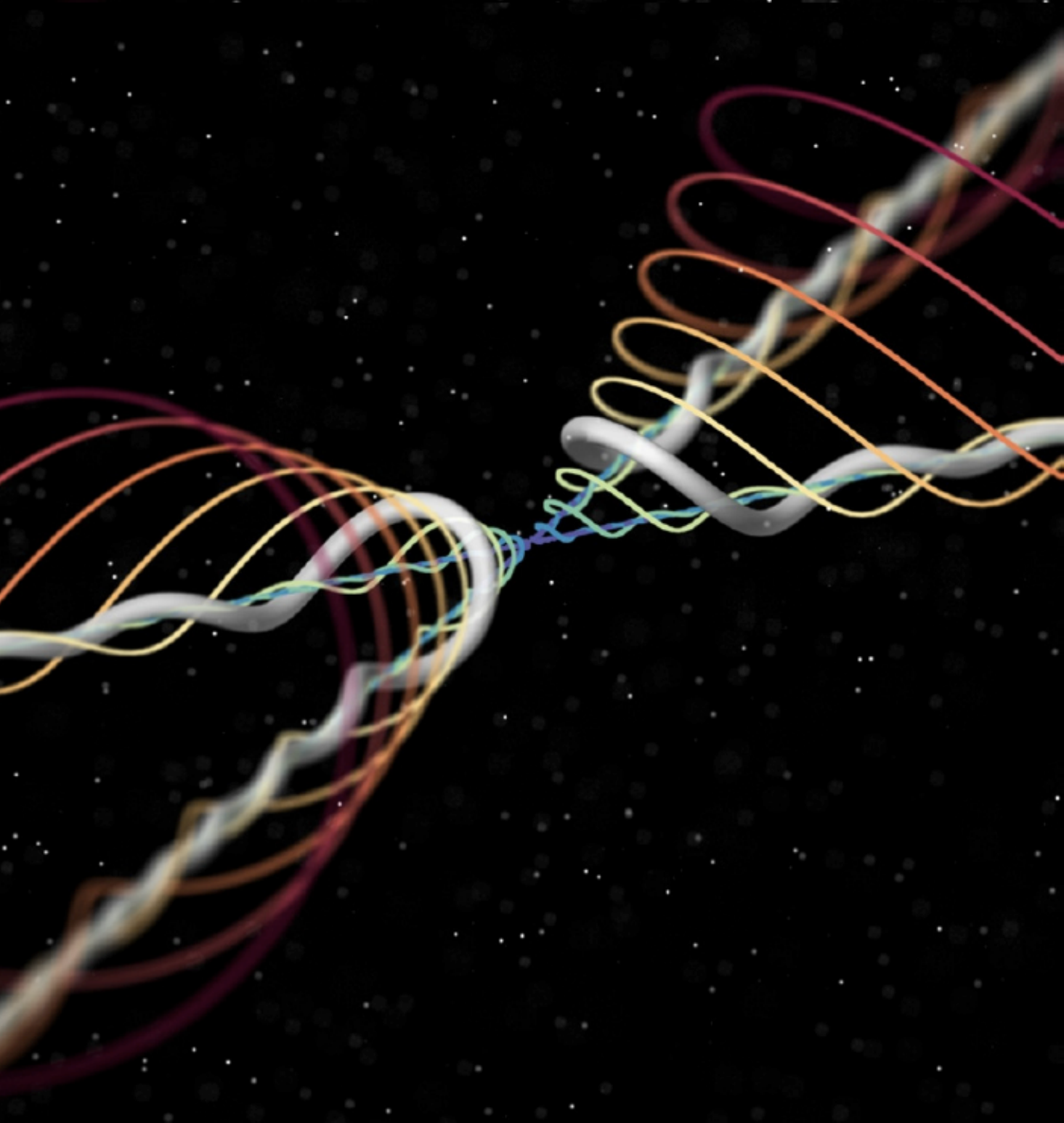}  \quad\quad  \includegraphics[height=1in]{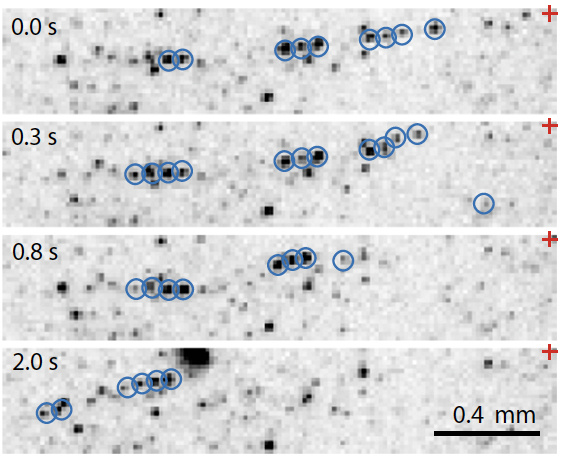}\\
{\small{Figure 1. Left: vortices in a fluid flowing over a triangular obstacle, Werl\'e, 1963.
  Middle left: picture of trail vortices after reconnection 2024. 
  Middle right: Numerical simulation of binormal flow selfsimilar solution (courtesy of Enrico Fonda 2014).
  Right: Direct observation of Kelvin waves excited by quantized vortex reconnection,
 Fonda et al. 2014 (\cite{FondaMeichleOuelletteLathrop2014}, see the two experiment movies on \href{https://www.pnas.org/doi/suppl/10.1073/pnas.1312536110}{https://www.pnas.org/doi/suppl/10.1073/pnas.1312536110}).}}
 \end{center}

 In a series of previous articles from 2009 to 2015 (\cite{BanicaVega2009},\cite{BanicaVega2012},\cite{BanicaVega2013},\cite{BanicaVega2015}, see also \cite{Guerin2023}) we understood more generally the behaviour of curves generating one corner in finite time and smoothing immediately after. As a first step we constructed solutions for \eqref{Hasimoto} with $f(t)=\frac{a^2}{t}$ that are small perturbations of $a\frac{e^{i\frac{x^2}{4t}}}{\sqrt{t}}$. Then these perturbations, that after pseudo-conformal transformation are long range scattering NLS solution, allowed us to construct via Hasimoto's method binormal flow solutions for $t> 0$, for which we determinate  the behavior at time $t=0$, and that we can continue for $t<0$ in a unique way in the Hasimoto's framework. 

\subsection{Turbulent features of solutions generating several singularities}  
In the last three decades there has been an intense activity on the analysis of turbulent behaviors of solutions of dispersive equations, as for instance the growth of Sobolev norms since the work of Bourgain in the 90s (\cite{Bourgain1995},\cite{Bourgain1996},\cite{Bourgain1999}).   In our works we have investigated turbulent dynamics through the 1D cubic Schr\"odinger completely integrable equation and its geometric version, the binormal flow, which as we have seen is an equation connected to fluids and superfluids. In the following we shall describe our framework. 

As we have recalled in the previous subsection, an important class of solutions of the binormal flow are the self-similar solutions, that are smooth curves which develop in finite time a singularity in the shape of a corner. 
Making interact several corner singularities is a natural question that has been investigated first by physicists.  Noncircular jets as square jets were studied since the 80s for the turbulent features they produce. For instance experiments were done by Todoya and Hussain (\cite{TodoyaHussain1989}), and numerics by Grinstein and De Vore (\cite{GrinsteinDeVore1996}), see Figure 2. 

\begin{center}
\includegraphics[height=1.2in]{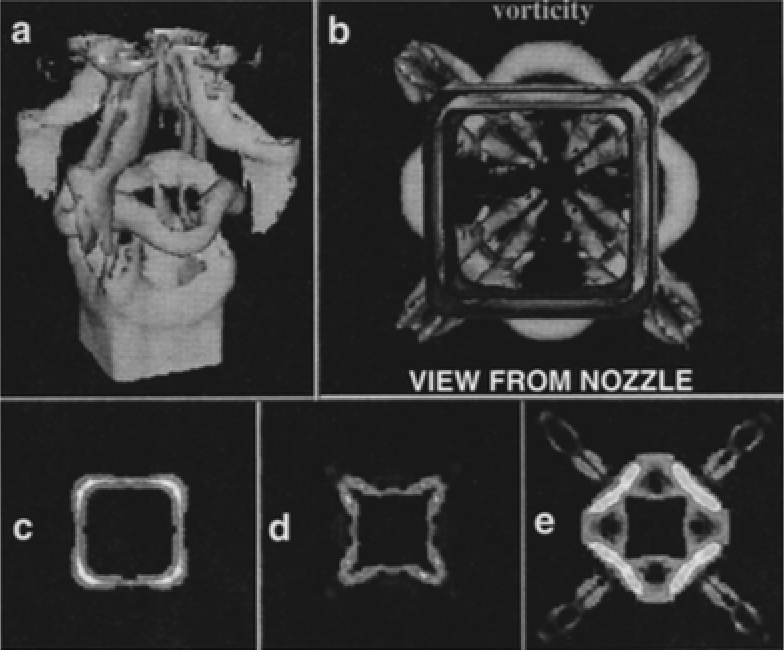}\\
 {\small{Figure 2. Axis switching in numerical simulation of square jets from \cite{GrinsteinDeVore1996}.}}
\end{center}

At the level of the binormal flow this corresponds to consider as initial data a closed curve that is a regular polygon. Such a regular $M-$polygon $\chi_M(0,x)$ with corners located at $x\in\mathbb Z$ (we see here the closed curve as being parametrized, in a periodic way, by $x\in\mathbb R$) is expected to evolve by the binormal flow to skew $Mq-$polygons at times $\frac pq\in\mathbb Q$, as suggested by numerics by Jerrard and Smets (\cite{JerrardSmets2015}), and by De la Hoz and Vega (\cite{DelaHozVega2014}), see Figure 3. This can be seen as a Talbot effect. More precisely, in view of the generation of one corner of angle $\theta_a$ by the self-similar solution $\chi_a$ driven by the filament function $a\frac{e^{i\frac{x^2}{4t}}}{\sqrt{t}}$, and of the relation \eqref{angle}, one expects that the filament function of the $M-$polygon evolution corresponds to considering NLS solutions with initial data of type Dirac comb distribution $\sum_{k\in\mathbb Z}\delta_{\frac kM}$, say instead $\sum_{k\in\mathbb Z}\delta_k$ to simplify the presentation, which is out of reach. However, by supposing Galilean invariance of solutions, De la Hoz and Vega proposed in \cite{DelaHozVega2014} a filament candidate as $u(t)=\phi(t)e^{it\Delta}\sum_{k\in\mathbb Z}\delta_k$, which has an explicit expression at rational times $t=\frac pq$, since the fundamental solution of the periodic Schr\"odinger equation values at rational times:
$$e^{i\frac pq\Delta}\sum_{k\in\mathbb Z}\delta_k=\sum_{j\in\mathbb Z} \tau_j\delta(x-\frac jq),$$
with the coefficients $\tau_j$ given in terms of Gauss sums of exponentials\footnote{Constants as for instance $2\pi$ that are irrelevant for the nature of the phenomena described will be neglected in general all along this article.} (see the computation in \S \ref{section-Talbot}). 
Then, they succeeded to integrate the parallel frame system \eqref{parallel} with filament function $u(\frac pq)$ and proved that the corresponding curves are skew polygons with $Mq$ sides, same angles, and torsion in terms of Gauss sums. This fits with the numerics performed on the square evolution, at the level of binormal flow and of the Schr\"odinger map \eqref{Smap}.

\begin{center}
\includegraphics[height=1.3in]{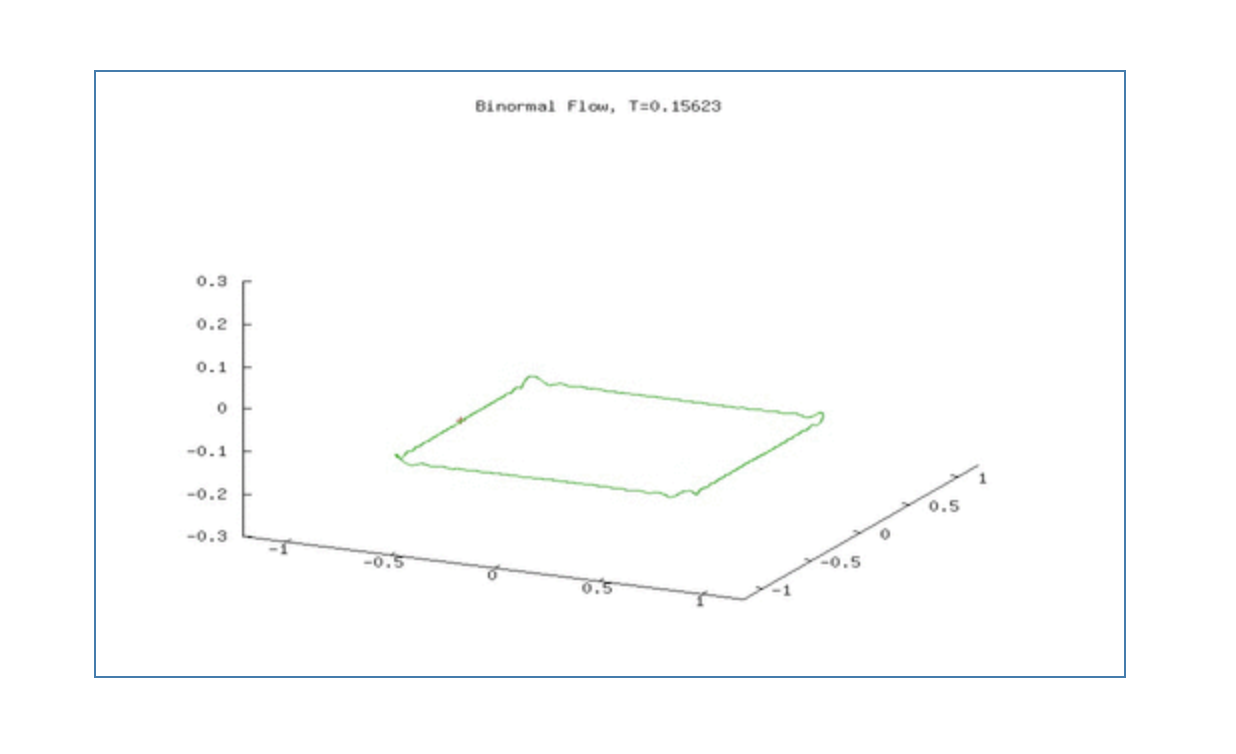}\includegraphics[height=1.3in]{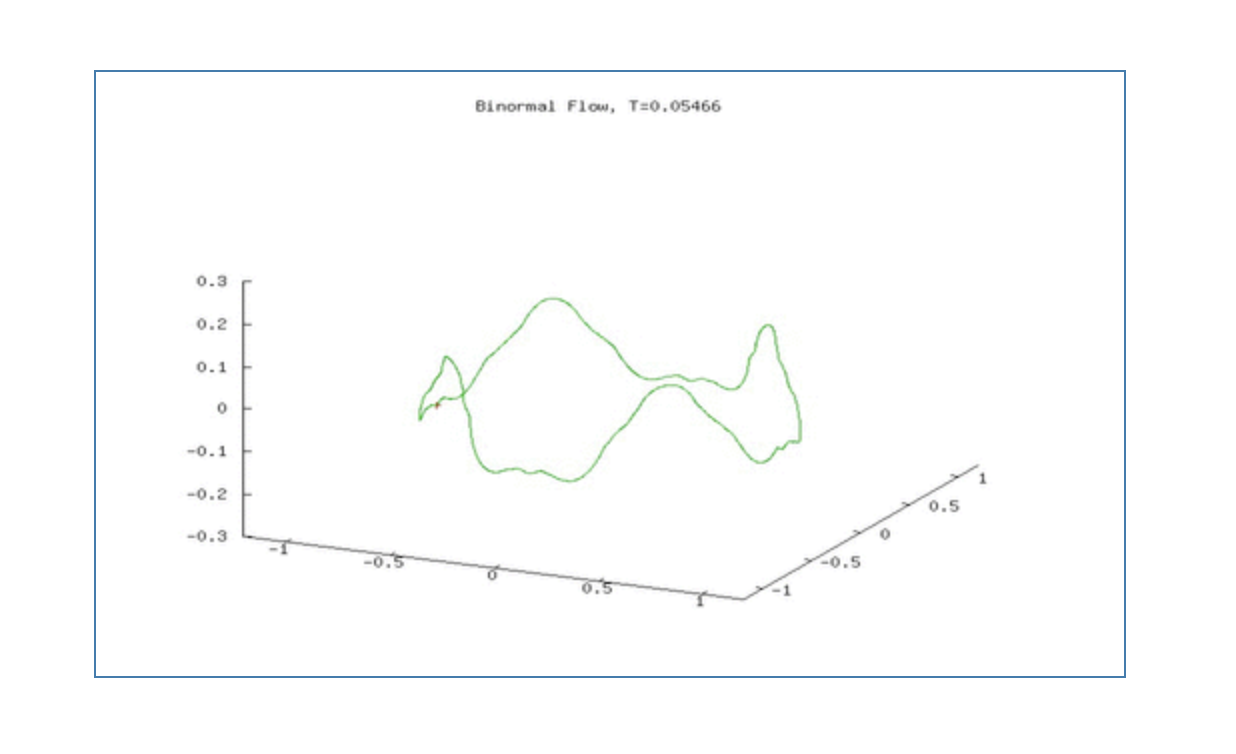}\includegraphics[height=1.3in]{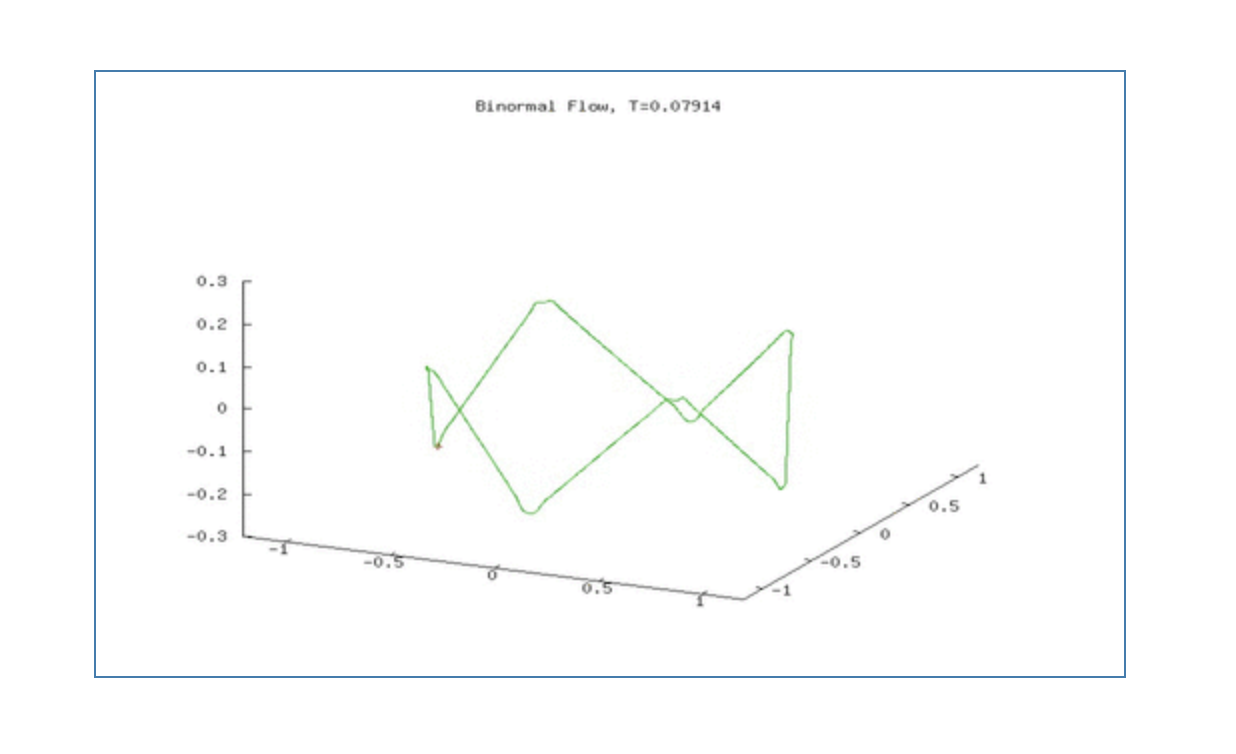}\\
{\small{Figure 3. Numerical binormal flow evolution of a square from \cite{JerrardSmets2015},\\ see \href{https://www.ljll.fr/gallery/html/DSmets\_LIA-fr.htm)}{https://www.ljll.fr/gallery/html/DSmets\_LIA-fr.htm}}}\end{center}

Moreover, other turbulent features were observed numerically, as for instance the growth in phase space at the level of the tangent vectors which model the 
direction of vorticity, and the multifractality of the trajectories. The trajectories of corners $\chi_M(t,0)$ were numerically proved to behave as Riemann's complex function $\sum_{k\in\mathbb Z}\frac{e^{itk^2}-1}{k^2}$ when $M$ tends to infinity (De la Hoz and Vega \cite{DelaHozVega2014}, De la Hoz, Kumar and Vega \cite{delaHozKumarVega2020}, see Figure 4). \medskip

\begin{center}
\includegraphics[height=1.1in]{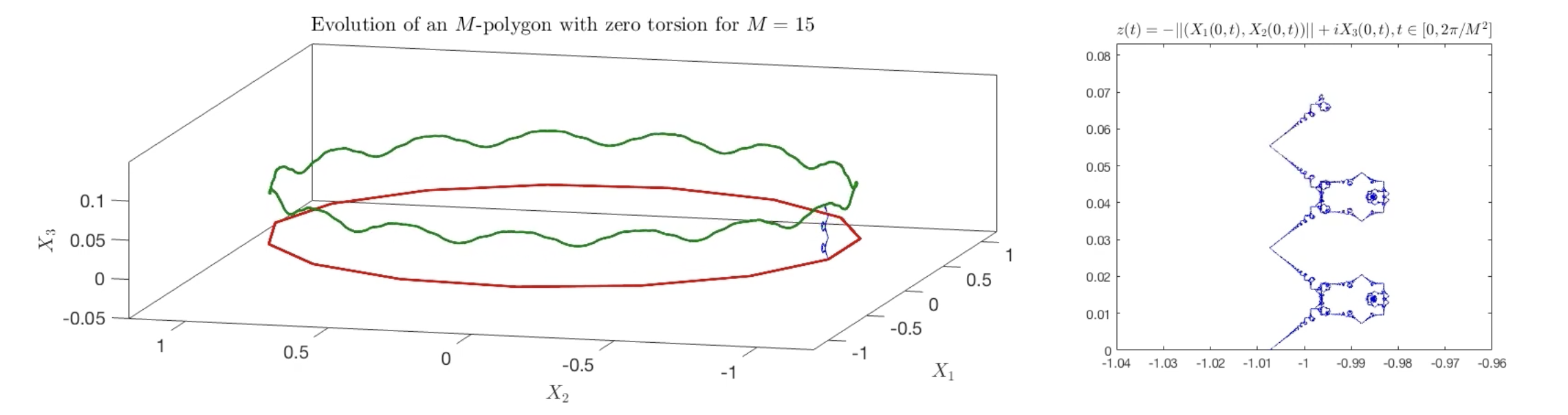}\\
{\small{Figure 4. Left: Evolution by the binormal flow of a M-polygon with $M=15$. Right: the trajectory in time of the solution at $x=0$ (courtesy of Sandeep Kumar, see \href{https://www.youtube.com/watch?v=bwbpKvqGk-o}{https://www.youtube.com/watch?v=bwbpKvqGk-o}).}} 
\end{center}
\medskip

In a new series of papers of the last six years, that we survey here (\cite{BanicaVega2020},\cite{BanicaVega2020bis},\cite{BanicaVega2022},\cite{BanicaVega2022bis},\cite{BanicaVega2024},\cite{BanicaEceizabarrenaNahmodVega2024},\\\cite{BanicaLucaTzvetkovVega2024}), by making interact several corners through the binormal flow, we displayed rigorously a large range of complex behavior as creation of singularities and unique continuation, Fourier growth, Talbot effects, intermittency and multifractality, justifying in particular the previous numerical observations. This has been done by constructing and analyzing binormal flow solutions with initial data of the shape of infinite polygonal lines with only two corners, or made by a regular M-polygon and two half-lines. To do so we constructed a class of well-posedness for the 1D cubic Schr\"odinger equation included in the critical Fourier-Lebesgue space $\mathcal FL^\infty$ and in supercritical Sobolev spaces with respect to scaling. 
\\

{\bf{Acknowledgements:}}  VB is partially supported by the ERC advanced grant GEOEDP and by the French ANR project BOURGEONS. LV is partially supported by MICINN (Spain) CEX2021-001142, PID2021-126813NB-I00 (ERDF A way of making Europe) and IT1247-19 (Gobierno Vasco). The authors are grateful to the referee for the very careful reading of the paper. 

\section{Existence of solutions generating several singularities and unique continuation}\label{section-several}
As we have seen, a corner of angle $\theta_a$ is generated by the binormal flow self-similar solution $\chi_a$, with $a$ related to $\theta_a$ by \eqref{angle}. Moreover, the filament function of $\chi_a$ is $a\frac{e^{i\frac{x^2}{4t}}}{\sqrt{t}}$, solution of the 1D cubic Schr\"odinger equation \eqref{Hasimoto} with $f=\frac{a^2}t$. Due to gauge invariance, as explained in the appendix, the solutions $\chi_a$ can be constructed from the filament function $ae^{ia^2\log t}\frac{e^{i\frac{x^2}{4t}}}{\sqrt{t}}$, a solution of the classical 1D cubic Schr\"odinger equation \eqref{CubicNLS}.
Thus, if we want to construct a binormal flow evolution of a polygonal line with several corners, a first natural step is to look for a solution of \eqref{CubicNLS}, smooth for positive times, with data at $t=0$ related to a superposition of Dirac masses. Then, the second step is to use Hasimoto's approach to construct from this solution a  smooth binormal flow solution for positive times and see if it may have as a limit at $t=0$ a polygonal line. 

Looking for 1D cubic Schr\"odinger solutions on $\mathbb R$ with data of Dirac mass type calls for a review on the state of the art of the Cauchy problem for \eqref{CubicNLS}. 
The rescaling that leaves invariant equation \eqref{CubicNLS} is $\lambda u(\lambda^2 t,\lambda x)$, the invariant space with respect to this scaling on the Sobolev scale is $\dot H^{-\frac 12}$, and the one on the Fourier-Lebesgue scale\footnote{We recall that the Fourier-Lebesgue space $\mathcal F L^p$ is the space of tempered distributions with Fourier transform in $L^p$.} is $\mathcal F L^\infty$. 
The equation is well-posed in $H^s$ for $s\geq 0$ and the flow map is uniformly continuous on bounded sets of $H^s$ (Ginibre and Velo \cite{GinibreVelo1985}, Tsutsumi  \cite{Tsutsumi1987}, Cazenave and Weissler  \cite{CazenaveWeissler1990}). It turns out that this cannot hold in $H^s$ for $s< 0$ (Kenig, Ponce and Vega \cite{KenigPonceVega2001}, Christ, Coliander and Tao \cite{ChristCollianderTao2003}). However, the Sobolev norms of Schwartz solutions have controlled growth for $-\frac 12< s<0$ (Koch and Tataru \cite{KochTataru2018} and Kilip, Visan and Zhang \cite{KillipVisanZhang2018}). Finally, Harrop-Griffths, Killip and Visan proved in 2024 (\cite{Harrop-GriffithKillipVisan2024}) the global well-posedness of \eqref{CubicNLS} in $H^s$ for $s>-1/2$  in the sense that the solution map on the Schwartz class admits a unique continuous extension to $H^s$, $s>-1/2$. Their result is sharp in the sense that for $s<-1/2$  a norm inflation with loss of regularity appear (Kishimoto \cite{Kishimoto2009}, Carles and Kappeler \cite{CarlesKappeler2017}, Oh \cite{Oh2017}), in particular \eqref{CubicNLS} is ill-posed in the Hadamard sense in $H^s$, $s<-1/2$. In what concerns the Fourier-Lebesgue spaces, the problem is known to be locally well-posed on $\mathcal F L^p$ for $p<+\infty$  (Vargas and Vega \cite{VargasVega2001}, Gr\"unrock  \cite{Grunrock2005}). In summary, we presently have a well-posedness theory that misses the critical spaces with respect to scaling. 

Let us notice that the Dirac mass and its linear Schr\"odinger evolution, the fundamental Schr\"odinger solution, are of borderline regularity $H^{-\frac12-\epsilon}$ for all $\epsilon>0$ and $\mathcal F L^\infty$, and therefore do not fit in the scope of applicability of the previously described well-posedness results.
Moreover, there are simple explicit solutions of the cubic Schr\"odinger equation \eqref{CubicNLS} for $t > 0$, having the same kind of borderline regularity, given by
$$u_\alpha(t, x) =\alpha e^{i|\alpha|^2\log t} \,\,e^{it\Delta}\delta_0= \alpha e^{i|\alpha|^2\log t} \,\,\frac{e^{i\frac{x^2}{4t}}}{\sqrt t},\quad\alpha\in\mathbb C.$$
Actually these solutions were displayed in the proof of Kenig, Ponce and Vega in \cite{KenigPonceVega2001} to the non-uniqueness of the Cauchy problem of \eqref{CubicNLS} with initial data a Dirac mass. Also, the self-similar solutions of the binormal flow are obtained by the Hasimoto approach also from them, as they differ from $|\alpha|\frac{e^{i\frac{x^2}{4t}}}{\sqrt{t}}$ just by a space-independent phase. This type of solutions allowed recently for a new insight on \eqref{CubicNLS} at critical regularity. Also solutions which are smooth perturbations of $u_\alpha$ were constructed in our previous articles \cite{BanicaVega2009},\cite{BanicaVega2012},\cite{BanicaVega2013},\cite{BanicaVega2015}, see also \cite{Guerin2023}. Then, following an ansatz of Kita proposed in \cite{Kita2006} for subcritical nonlinearities (subcubic), we constructed for \eqref{CubicNLS} solutions of type superpositions of $u_\alpha$:
\begin{equation}\label{superp}
\sum_{j\in\mathbb Z} A_k(t)e^{it\Delta}\delta_k=\sum_{k\in\mathbb Z} A_k(t)\frac{e^{i\frac{(x-k)^2}{4t}}}{\sqrt t},
\end{equation}
as follows.
\begin{theorem}\label{thNLScrit} (Construction of critical NLS solutions, \cite{BanicaVega2022}) Let $s>\frac 12,0<\gamma<1$, $\{\alpha_k\}\in l^{2,s}$, i.e. $\|\{\alpha_k\}\|_{l^{2,s}}^2:=\sum_k|\alpha_k|^2(1+k^2)^s<\infty$. 
 There exists $T>0$ and a unique solution of \eqref{CubicNLS} on $(0,T)$ of the form 
\begin{equation}\label{superpsol}
u_{\{\alpha_k\}}(t,x)=\sum_{k\in\mathbb Z}A_k(t)e^{it\Delta}\delta_k(x),
\end{equation}
 with 
 $$A_k(t)=e^{- i(|\alpha_k|^2-2\sum_{j\in\mathbb Z}|\alpha_j|^2)\log t}(\alpha_k+R_k(t)),$$
 and
$$\sup_{0<t<T}t^{-\gamma}\|\{R_k(t)\}\|_{l^{2,s}}+t\,\|\{\partial_t R_k(t)\}\|_{l^{2,s}}<C.$$
Moreover, if $s\geq 1$ then the solution can be extended to $(0,\infty)$. 
\end{theorem}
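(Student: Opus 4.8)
The plan is to turn the PDE \eqref{CubicNLS} into an infinite ODE system for the coefficients $A_k(t)$ and then solve that system by a fixed-point argument in the space carrying the norm from the statement.

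\emph{Reduction to an ODE system.} Write $f_k(t,x):=e^{it\Delta}\delta_k=t^{-1/2}e^{i(x-k)^2/(4t)}$ (up to irrelevant constants). Each $f_k$ solves the free equation $i\partial_t f_k+\partial_{xx}f_k=0$, so inserting the ansatz \eqref{superpsol} into \eqref{CubicNLS} annihilates the linear part and leaves $i\sum_k A_k'f_k=-|u|^2u$. The only genuine computation is the triple product
$$f_j\,\overline{f_l}\,f_m=\frac1t\,e^{i\omega_{jlm}/(4t)}\,f_{k},\qquad k=j-l+m,\quad \omega_{jlm}=j^2-l^2+m^2-k^2=-2(j-l)(m-l).$$
Matching the coefficient of $f_k$ on both sides reduces \eqref{CubicNLS} to the exact system
$$iA_k'(t)=-\frac1t\sum_{j-l+m=k}A_j\,\overline{A_l}\,A_m\,e^{i\omega_{jlm}/(4t)}.$$

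\emph{Isolating resonances and motivating the phase.} The terms with $\omega_{jlm}=0$ are exactly those with $j=l$ or $m=l$, and they sum to the diagonal contribution $\big(2\sum_l|A_l|^2-|A_k|^2\big)A_k$, so
$$iA_k'=-\frac1t\Big(2\sum_l|A_l|^2-|A_k|^2\Big)A_k-\frac1t\!\!\sum_{\substack{j-l+m=k\\\omega_{jlm}\neq0}}\!\!A_j\,\overline{A_l}\,A_m\,e^{i\omega_{jlm}/(4t)}.$$
Dropping the oscillatory sum and freezing $|A_l|^2=|\alpha_l|^2$ leaves $iA_k'=-(2\sum_l|\alpha_l|^2-|\alpha_k|^2)A_k/t$, which is solved precisely by the logarithmic phase $e^{-i(|\alpha_k|^2-2\sum_j|\alpha_j|^2)\log t}$; this explains the form of the ansatz for $A_k$. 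Substituting $A_k=e^{-i\beta_k\log t}(\alpha_k+R_k)$ with $\beta_k=|\alpha_k|^2-2\sum_j|\alpha_j|^2$, the quadratic-in-$\alpha$ part cancels and one gets a closed system
$$iR_k'=-\frac1t\,E_k(R)\,(\alpha_k+R_k)-e^{i\beta_k\log t}\,\mathcal N_k(A),$$
where $E_k(R)=2\sum_l\big(2\operatorname{Re}(\overline{\alpha_l}R_l)+|R_l|^2\big)-\big(2\operatorname{Re}(\overline{\alpha_k}R_k)+|R_k|^2\big)$ is at least linear in $R$ and $\mathcal N_k$ is the non-resonant sum.

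\emph{Fixed point and the main obstacle.} Integrating from $0$ (using $R_k\to0$ as $t\to0^+$, forced by membership in the space) gives an integral equation which I solve as a fixed point in $X=\{R:\ \sup_{0<t<T}t^{-\gamma}\|R(t)\|_{l^{2,s}}+t\|\partial_tR(t)\|_{l^{2,s}}<\infty\}$. Two mechanisms supply the smallness. First, because $s>\frac12$ the weighted space $l^{2,s}$ is an algebra for the (tri)convolution induced by $j-l+m=k$, so the resonant term obeys $\|t^{-1}E_k(R)(\alpha+R)\|_{l^{2,s}}\lesssim t^{-1}\|R\|_{l^{2,s}}(\|\alpha\|_{l^{2,s}}+\|R\|_{l^{2,s}})$; since $\|R\|_{l^{2,s}}\lesssim t^\gamma$, its time integral is $O(t^\gamma)$, matching the $X$-norm. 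Second, and this is the heart of the matter, the non-resonant sum must be integrated against the fast phase: writing $t^{-1}e^{i\omega/(4t)}=\tfrac{4it}{\omega}\frac{d}{dt}e^{i\omega/(4t)}$ and integrating by parts gains a full power of $t$ (the boundary term is $O(t/\omega)$ and vanishes at $0$), giving $O(t)$, which beats $t^\gamma$ for $\gamma<1$; the same estimates without the time integration bound $t\|\partial_tR\|_{l^{2,s}}$.

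The delicate point is precisely this last estimate: after dividing by $\omega_{jlm}=-2(j-l)(m-l)$ one must still sum a triple convolution and keep the weighted $l^{2,s}$ norm finite uniformly in $t$, while the derivative produced by $\frac{d}{dt}e^{i\omega/(4t)}$ is transferred onto the slowly varying amplitudes $A_j\overline{A_l}A_m$ without breaking the algebra structure; controlling these sums and their $\partial_t$ analogues is where the bulk of the work lies. Once the nonlinear map is shown to send a small ball of $X$ into itself and to contract for $T$ small, existence and uniqueness on $(0,T)$ follow. For $s\ge1$ the sharper weighted control propagates the same bounds to large $t$, where the explicit $1/t$ factor renders both the resonant and non-resonant contributions summable, yielding the extension to $(0,\infty)$.
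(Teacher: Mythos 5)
Your overall strategy coincides with the paper's: same ansatz, same derivation of the discrete system (the paper obtains it by pairing against the family $e^{it\Delta}\delta_k$, which is orthonormal in $L^2(0,4t)$ up to constants, rather than by matching coefficients, but this is the same computation), same resonant/non-resonant splitting with $\omega_{jlm}=-2(j-l)(m-l)$, same logarithmic phase extraction, the same space $X^\gamma$, and the same key mechanism of integrating by parts in time against the non-resonant phase $e^{i\omega/(4t)}$ to gain a full power of $t$.

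There is, however, a genuine gap in your treatment of the resonant term. You bound $\frac1t E_k(R)(\alpha_k+R_k)$ by $t^{-1}\|R\|_{l^{2,s}}(\|\alpha\|_{l^{2,s}}+\|R\|_{l^{2,s}})$ and conclude that its time integral is $O(t^\gamma)$, ``matching the $X$-norm.'' Matching the norm is not enough: the part of $E_k(R)$ that is \emph{linear} in $R$ produces, after integration, a contribution of size $\frac{t^\gamma}{\gamma}\|\alpha\|^2\,\sup_\tau\tau^{-\gamma}\|R(\tau)\|_{l^{2,s}}$, i.e.\ a Lipschitz constant of order $\|\alpha\|^2/\gamma$ which does \emph{not} shrink as $T\to0$ (the $1/t$ singularity eats exactly the smallness you would normally gain from a short time interval). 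Since the theorem assumes no smallness of $\{\alpha_k\}$, the contraction does not close as you have set it up. The paper's fix is precisely the step you skipped: first, the conservation of $\sum_j|A_j(t)|^2$ makes the off-diagonal resonant bracket $2\sum_j(|A_j|^2-|\alpha_j|^2)$ vanish identically; second, the remaining diagonal piece $|A_k(t)|^2-|\alpha_k|^2$ is rewritten, via its evolution law $\partial_t|A_k|^2=2\Im(g_k\overline{A_k})$ (which is driven \emph{only} by the non-resonant sum $g_k$, the resonant part being a pure phase rotation), as $\int_0^t 2\Im(g_k\overline{A_k})\,ds$. This inner integral is $O(s)$ by the same non-resonant integration by parts, so the whole resonant contribution becomes the double-integral term in the paper's operator $\Phi$ and is $O(t)$, with a constant that does vanish as $T\to0$. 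Without this reduction your fixed point only works under a smallness assumption on the data, which is weaker than the stated theorem.
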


The $l^{2,s}$ hypothesis was relaxed to $l^{p,s}$ by Bravin and Vega (\cite{BravinVega2022}). Also, solutions that are smooth perturbations of $u_{\{\alpha_k\}}$ were constructed by Gu\'erin (\cite{Guerin2022}). 

Let us note that by using the pseudo-conformal transformation 
$$
v(\tau,y):=\frac{e^{i\frac{y^2}{4\tau}}}{\sqrt {\tau}}\,\overline u(\frac 1\tau,\frac y{\tau}),
$$
equation \eqref{CubicNLS} is transformed into 
$$
iv_t+v_{xx}+ \frac 1t|v|^2v=0.
$$
Moreover, the  ansatz \eqref{superp} translates into simply being in the periodic setting of this equation. 
In particular, for this periodic 1D cubic NLS with time-variable coefficient we proved by Theorem \ref{thNLScrit} the existence of wave operators. We proved also asymptotic completeness in \cite{BanicaLucaTzvetkovVega2024}, that we review in \S \ref{section-blup}. 

The proof goes as follows: plugging the ansatz $u(t,x)=\sum_{k\in\mathbb Z}A_k(t)e^{it\Delta}\delta_k(x)$ into equation \eqref{CubicNLS} leads to a discrete nonautonomous Hamiltonian system on $\{A_k(t)\}$. We solve this discrete system by a fixed point argument, based on integrations by parts from the nonresonant phases. 
In \S \ref{subsectproofNLS} we give more details on the proof.

The NLS solutions in Theorem \ref{thNLScrit} are the starting point, via the Hasimoto transform, that allowed us to construct the evolution of polygonal lines by the binormal flow as follows.

\begin{theorem}\label{thevpoly}(Evolution of polygonal lines by the binormal flow, \cite{BanicaVega2020}) 
We consider an arclength parametrized polygonal line $\chi_0$ with corners located at $x=k\in\mathbb Z$, of angles\footnote{i.e. $\theta_k$ is the angle of the self-similar binormal flow solution $\chi_{a_k}$, see \eqref{angle}.}\footnote{The fact that $\{a_k\}\in l^{2,3}$ implies that the angles $\theta_j$ tend to $\pi$ as $|j|\rightarrow\infty$.}  $\theta_k$ such that 
$a_k:=\sqrt{-\frac {2}{\pi}\log\left(\sin\left(\frac{\theta_k}{2}\right)\right)}\in l^{2,\frac 32^+}$.
Then, there exists $\chi(t)$ smooth solution of the binormal flow on $\mathbb R^*$, weak solution on $\mathbb R$, with
$$|\chi(t,x)-\chi_0(x)|\leq C\sqrt{t},\quad \forall x\in\mathbb R,|t|\leq 1.$$
The solution is unique in the framework of curves having as filament functions the solutions in Theorem \ref{thNLScrit}.
\end{theorem}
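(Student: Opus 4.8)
The plan is to build $\chi(t)$ by applying the Hasimoto construction to the NLS solution $u_{\{\alpha_j\}}$ provided by Theorem~\ref{thNLScrit}, and then to control its behavior down to $t=0$. First I would fix the correspondence between the geometric data and the analytic data: given the angles $\theta_k$, set $a_k$ by the stated formula so that $\{a_k\}\in l^{2,3/2^+}$, and let $\alpha_k:=a_k$ (up to the irrelevant normalizing constants) play the role of the coefficients in Theorem~\ref{thNLScrit}. Since $3/2^+>1/2$, the hypothesis $\{\alpha_k\}\in l^{2,s}$ with $s>1/2$ is met, and moreover $s\ge 1$ holds, so the NLS solution $u=u_{\{\alpha_j\}}$ exists and is smooth on all of $(0,\infty)$, with the quantitative remainder bound $\sup_{0<t<T}t^{-\gamma}\|\{R_k(t)\}\|_{l^{2,s}}<C$.

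Next I would integrate the parallel transport frame system \eqref{parallel} with filament function $u(t,x)$ to produce, for each fixed $t>0$, an orthonormal frame $(T,e_1,e_2)$ along $x$, and then define $\chi(t,x):=\int_0^x T(t,y)\,dy+\chi(t,0)$, with the base point $\chi(t,0)$ and the frame at $x=0$ chosen compatibly in $t$ by integrating the time-evolution equations of the frame (the Hasimoto system in \S\ref{section-Hasimoto}). Because $u$ is smooth on $(0,\infty)$, standard ODE theory gives a smooth frame and hence a smooth binormal flow solution $\chi(t)$ on $\mathbb R^*$; one must check that $T(t)=\chi_x(t)$ indeed solves the Schr\"odinger map \eqref{Smap}, which follows from the Hasimoto correspondence recalled in the introduction.

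The heart of the matter, and the step I expect to be the main obstacle, is the limit $t\to 0$ and the estimate $|\chi(t,x)-\chi_0(x)|\le C\sqrt{t}$ uniformly in $x$. The difficulty is that $u(t)$ concentrates into a Dirac comb as $t\to 0$, so each summand $A_k(t)e^{it\Delta}\delta_k$ individually behaves like a self-similar filament function generating a corner of angle $\theta_k$ at the location $x=k$. I would decompose the frame evolution near each integer $k$ by comparison with the exact self-similar solution $\chi_{a_k}$: on a window around $x=k$ the dominant contribution to the filament function is $A_k(t)e^{i(x-k)^2/4t}/\sqrt t$, and the remaining sum is lower order because the quadratic phases $e^{i(x-j)^2/4t}$ are nonresonant for $j\ne k$ and the coefficients $\{\alpha_j\}$ decay in $l^{2,s}$. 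The known self-similar analysis (Guti\'errez--Rivas--Vega) then identifies the limiting corner of angle $\theta_k$, and the remainder bound on $\{R_k(t)\}$, together with the explicit $\sqrt t$ scaling of the self-similar profile $\sqrt t\,G(x/\sqrt t)$, yields the $C\sqrt t$ rate. Summing the local contributions using the $l^{2,s}$ summability produces the global polygonal line $\chi_0$ as the $t\to 0$ limit and simultaneously gives the weak-solution property across $t=0$.

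Finally, uniqueness within the stated class is inherited directly from the uniqueness clause of Theorem~\ref{thNLScrit}: any competing curve in the Hasimoto framework has a filament function solving \eqref{CubicNLS} of the form \eqref{superpsol}, which by Theorem~\ref{thNLScrit} must coincide with $u_{\{\alpha_j\}}$; since the frame and base-point data are then determined (up to a rigid motion fixed by matching $\chi_0$), the curve $\chi(t)$ is unique. The principal technical labor is thus concentrated in the matching-and-summation argument near $t=0$, where one trades the integrability of $\{\alpha_k\}$ against the nonresonance of the self-similar phases to obtain a \emph{uniform}-in-$x$ control rather than a merely local one.
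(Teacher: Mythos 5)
Your overall strategy is the one the paper follows (apply the Hasimoto construction to $u_{\{\alpha_j\}}$ from Theorem~\ref{thNLScrit}, take a trace at $t=0$, and identify the limit by comparison with the self-similar solutions $\chi_{a_k}$ near each integer), but there is a genuine gap at the very first step: you set $\alpha_k:=a_k$ real, declaring the phases ``irrelevant normalizing constants.'' They are not. A polygonal line in $\mathbb R^3$ is determined by its curvature angles \emph{and} its torsion angles (the dihedral angles between consecutive segments' osculating configurations), and the paper takes $\alpha_k=a_ke^{i\gamma_k}$ precisely so that the phases $\gamma_k$ encode the torsion data (Steps 1 and 8 of the paper's proof, where the relation between $\{\gamma_k\}$ and the torsion angles is extracted via the modulated normal vectors $\tilde N(t,x)=e^{i\sum_j|\alpha_j|^2\log\frac{x-j}{\sqrt t}}N(t,x)$). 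With real $\alpha_k$ your construction produces \emph{a} polygonal line with the prescribed corner angles, but not the prescribed curve $\chi_0$; no global rigid motion can repair this, since torsion angles are intrinsic. Relatedly, you do not actually prove that the trace $\chi(0)$ is a polygonal line: the paper needs the separate oscillatory-integral arguments showing that $T(t,x)$ has a limit for $x\notin\mathbb Z$ and that $T(0,x)$ is constant on each interval $(k,k+1)$ (Steps 5--6), before the self-similar comparison on paths $(t_n^k,k+x\sqrt{t_n^k})$ can identify the angles.

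Two further points. The $\sqrt t$ rate does not require any matching with self-similar profiles: it follows immediately from $\chi_t=\Im(\overline u N)$ in \eqref{ev}, which gives $|\partial_t\chi(t,x)|\le|u_{\{\alpha_j\}}(t,x)|\le C(\|\{\alpha_j\}\|_{l^1})/\sqrt t$ uniformly in $x$, and integrating in time; your more elaborate route is not needed for this estimate. Finally, the statement asks for a smooth solution on all of $\mathbb R^*$ and a weak solution on $\mathbb R$; you only construct the solution for $t>0$ and wave at the weak-solution property. The continuation to $t<0$ is a genuine (if short) additional step: one sets $\chi(t,x)=\tilde\chi(-t,-x)$ where $\tilde\chi$ is the positive-time evolution of the reversed parametrization $\chi(0,-x)$, and this is not geometrically trivial here because the filament function of the reversed line involves the reversed sequence $\{\alpha_{-k}\}$ and a different remainder $\tilde R_k$.
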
 
We note that the NLS solutions in Theorem \ref{thNLScrit} blow-up at time $t=0$ with a loss of phase. Phase blow up phenomena were encountered for the Schr\"odinger equation since the works of Merle in the 90s (\cite{Merle1992}, see also \cite{MerleRaphaelSzeftel2013}). Here, we see that despite this loss of phase the solutions of the binormal flow associated via Hasimoto's transform are uniquely continued after the singularity time $t=0$. This can be seen as a way of continuation for NLS despite the phase loss. In \S \ref{subsectproofBF} we give details on the proof of Theorem \ref{thevpoly}. 

\subsubsection{Sketch of the proof of Theorem \ref{thNLScrit}}\label{subsectproofNLS}
By plugging the ansatz $u(t)=\sum_{k\in\mathbb Z}A_k(t)e^{it\Delta}\delta_k$ into equation \eqref{CubicNLS}
we obtain:
$$\sum_{k\in\mathbb Z}i\partial_t A_k(t)e^{it\Delta}\delta_k=|\sum_{j\in\mathbb Z}A_j(t)e^{it\Delta}\delta_j|^2(\sum_{j\in\mathbb Z}A_j(t)e^{it\Delta}\delta_j).$$
The family $e^{it\Delta}\delta_k(x)=\frac{e^{i\frac{(x-k)^2}{4t}}}{\sqrt{t}}$ is, modulo some constants, an orthonormal family of $L^2(0,4t)$, so
 by taking the scalar product with $e^{it\Delta}\delta_k$ we obtain the following nonautonomous Hamiltonian system of the coefficients $\{A_k(t)\}$:
$$i\partial_t A_k(t)=\int_0^{4 t} \mathcal |\sum_{j\in\mathbb Z}A_j(t)\frac{e^{i\frac{(x-j)^2}{4t}}}{\sqrt{t}}|^2(\sum_{j\in\mathbb Z}A_j(t)\frac{e^{i\frac{(x-j)^2}{4t}}}{\sqrt{t}})\frac{-e^{i\frac{(x-k)^2}{4t}}}{\sqrt{t}}\,dx.$$
$$=\frac{1}{4t}\sum_{k-j_1+j_2-j_3=0}e^{-i\frac{k^2-j_1^2+j_2^2-j_3^3}{4t}}A_{j_1}(t)\overline{A_{j_2}(t)}A_{j_3}(t).$$
We split the summation indices into the following two sets:
$$NR_k=\{(j_1,j_2,j_3)\in\mathbb Z^3, k-j_1+j_2-j_3=0, k^2-j_1^2+j_2^2-j_3^2\neq0\},$$
$$Res_k=\{(j_1,j_2,j_3)\in\mathbb Z^3, k-j_1+j_2-j_3=0, k^2-j_1^2+j_2^2-j_3^2=0\}.$$\smallskip
As we are in one dimension, the resonant set is simply:
$$Res_k=\{(k,j,j), (j,j,k), j\in\mathbb Z\},$$
so the system writes:
\begin{equation}\label{systAj}
i\partial_t A_k(t)=\frac 1{4t}\sum_{(j_1,j_2,j_3)\in NR_k}e^{-i\frac{k^2-j_1^2+j_2^2-j_3^3}{4t}}A_{j_1}(t)\overline{A_{j_2}(t)}A_{j_3}(t)+\frac{A_k(t)}{4 t}(2\sum_j|A_j(t)|^2-|A_k(t)|^2).
\end{equation}
The system preserves the ``mass" $\sum_k|A_k(t)|^2$, since for any $a:\mathbb Z\rightarrow\mathbb R$ we can compute, by using a standard symmetrization argument and the fact that $a(k)-a(j_1)+a( j_2)-a(j_3)$ vanishes on the resonant set:
$$\partial_t\sum_k a(k)|A_k(t)|^2=\frac 1{4t}\sum_{k,NR_k}(a(k)-a(j_1)+a(j_2)-a(j_3))e^{-i\frac{k^2-j_1^2+j_2^2-j_3^3}{4t}}A_{j_1}(t)\overline{A_{j_2}(t)}A_{j_3}(t)).$$

We shall obtain the existence of solutions of system \eqref{systAj} of the form 
$$A_k(t)=e^{- i(|\alpha_k|^2-2M)\log t}(\alpha_k+R_k(t)),$$ 
with $R_k$ in
\begin{equation}\label{Xgamma}
X^\gamma:=\{\{f_k\}\in\mathcal C([0,T];l^{2,s}), \sup_{0\leq t<T}t^{-\gamma}\|\{f_k(t)\}\|_{l^{2,s}}+t\|\{\partial_t f_k(t)\}\|_{l^{2,s}}<\infty\}.
\end{equation}
By using the ``mass" $\sum_j|A_j(t)|^2$ conservation,  and the evolution law of $|A_k(t)|^2$, we reduce to proving that  the operator $\Phi$ defined on the space $\mathcal C([0,T];l^{2,s})$ as

{\small{$$\Phi_k(\{R_j\})(t)=i\int_0^t g_k(\tau)d\tau-i\int_0^t \int_0^\tau \Im (g_k(s)\overline{(\alpha_{k}+R_{k}(s)})ds\,(\alpha_{k}+R_{k}(\tau))\frac{d\tau}{4\pi\tau},$$}}
where
$$g_k(t)=\,\frac{1}{4\pi t}\sum_{(j_1,j_2,j_3)\in NR_k}e^{-i\frac{k^2-j_1^2+j_2^2-j_3^2}{4t}}e^{-i\frac{|\alpha_k|^2-|\alpha_{j_1}|^2+|\alpha_{j_2}|^2-|\alpha_{j_3}|^2}{4\pi}\log t}\times$$
$$\times (\alpha_{j_1}+R_{j_1}(t))\overline{(\alpha_{j_2}+R_{j_2}(t))}(\alpha_{j_3}+R_{j_3}(t)),$$
is a contraction on a small ball of the space $X^\gamma$ defined in \eqref{Xgamma}. The estimates in the fixed point arguments are based on integrations by parts from the nonresonant phases, which yield decay in time necessary for integration near zero.

\subsubsection{Sketch of the proof of Theorem \ref{thevpoly}}\label{subsectproofBF}

The proof can be synthethized as follows:

\medskip
\noindent
{\bf{Step 1:}} define $\alpha_k=a_ke^{i\gamma_k}$, with $a_k,\gamma_k$ expressed in terms of the curvature angles and torsion angles of the polygonal line $\chi_0$ in a way to be specified later (for instance $a_k$ is determined from the curvature angle $\theta_k$ by formula \eqref{angle}),

\medskip
\noindent
{\bf{Step 2:}}  use Theorem \ref{thNLScrit} to get a solution $u_{\{\alpha_k\}}$ of the NLS equation \eqref{CubicNLS}, smooth for $t>0$,

\medskip
\noindent
{\bf{Step 3:}}  construct for $t>0$ the binormal flow solution $\chi$ obtained from $u_{\{\alpha_k\}}$ by the Hasimoto construction detailed in \S\ref{sectHas}. In particular, the frames $(T,e_1,e_2)$ satisfy the ODEs in time and in space \eqref{ODEx}-\eqref{ODEt} with system matrices involving $u_{\{\alpha_k\}}$,

\medskip
\noindent
{\bf{Step 4:}}  get a trace $\chi(0)$ for $\chi(t)$ as $t$ goes to zero with the rate of convergence $\sqrt t$, as from \eqref{ev} we have 
$$|\partial_t\chi(t)|\leq 2|u_{\{\alpha_k\}}(t)|\leq \frac{C(\|\alpha_j\|_{l^1})}{\sqrt t}.$$ 
The goal is now to show that $\chi(0)$ and $\chi_0$ coincide,

\medskip
\noindent
{\bf{Step 5:}}  for $x\notin\mathbb Z$ obtain a limit as $t$ goes to zero for $T(t,x)$, based on estimating oscilatory integrals involving $u_{\{\alpha_k\}}$,

\medskip
\noindent
{\bf{Step 6:}}  prove that the vectors $T(0,x)$ are constant for $k<x<k+1$, again based on estimating oscilatory integrals involving $u_{\{\alpha_k\}}$, so $\chi(0)$ is a polygonal line,

\medskip
\noindent
{\bf{Step 7:}}  recover a self-similar binormal flow profile $\chi_{a_k}$ on self-similar paths towards $(0,k)$: first conclude by Arzela-Ascoli's theorem that there is a limit for the frames via self-similar path $(t_n^k,k+x\sqrt{t_n^k})$ for some sequence $t_n^k\overset{n\rightarrow\infty}{\rightarrow} 0$, and that the limit frame satisfies the EDO of the self-similar profile $\chi_{a_k}$. Then conclude by uniqueness that there exists a rotation $\Theta_k$ such that
\begin{center}$\underset{n\rightarrow\infty}{\lim}T(t_n^k,k+x\sqrt{t_n^k})=\Theta_k (T_{a_k}(x)),$\end{center}
where $T_{a_k}$ is the tangent vector of the profile curve $\chi_{a_k}(1)$, which has a limit $A^\pm_{a_k}$ at $\pm\infty$. 
This allows for recovering the curvature angle of $\chi_0$ at $x=k$, since 
the values $T(0,k^\pm)$ are limits of $T(0,k+x\sqrt{t_n^k})$, which in turn by Step 5 can be approximated by $T(t_n^k,k+x\sqrt{t_n^k})$ which eventually is close to $\Theta_kA^\pm_{a_k},$ by taking $x$ large enough,

\medskip
\noindent
{\bf{Step 8:}}  recover the torsion angles of $\chi_0$ by using also a similar analysis for the modulated normal vectors 
$$\tilde N(t,x)=e^{i\sum_j|\alpha_j|^2\log\frac{x-j}{\sqrt{t}}}N(t,x).$$ At this stage the relation between the sequences $\{\alpha_k\}$ and $\{\gamma_k\}$ and the curvature and torsion angles of the initial curve $\chi_0$ becomes explicit,

\medskip
\noindent
{\bf{Step 9:}}  unique continuation after singularity time: since now that the theorem is proved for positive time evolutions, we use the time invariance of binormal flow to construct $\chi$ for negative times by $\chi(t,x)=\tilde\chi(-t,-x)$, where $\tilde\chi$ is the evolution for positive times with initial data the polygonal line $\tilde\chi(0,x)=\chi(0,-x)$. Thus the  continuation of $\chi$ for negative times is done by the evolution of $\chi(0)$ with the sense of parametrization inverted. 

Let us recall that in the simple case of self-similar solutions $\chi_a$, for which $\chi_a(0)$ is a curve with one corner of angle $\theta_a$, for negative times the solution is a rotation and symmetry of the solution for positive times. This is due in particular to the fact that the filament function is the same for the evolution of $\chi_a(0,x)$ as well as for the evolution of $\chi_a(0,-x)$, namely $a\frac{e^{i\frac{x^2}{4t}}}{\sqrt t}$.  In our case the inversion of the sense of parametrization has an evolution that is not geometrically trivially related to the evolution of $\chi(0,x)$ for positive times. This can be seen from the fact that the filament function used for constructing the evolution of $\chi(0,x)$ is 
$$\sum_ke^{- i(|\alpha_k|^2-2\sum_{j\in\mathbb Z}|\alpha_j|^2)\log t}(\alpha_{k}+R_{k}(t))\frac{e^{i\frac{(x-k)^2}{4t}}}{\sqrt t},$$ 
while the filament function used for constructing the evolution of $\chi(0,-x)$ is 
$$\sum_ke^{- i(|\alpha_{-k}|^2-2\sum_{j\in\mathbb Z}|\alpha_j|^2)\log t}(\alpha_{-k}+\tilde R_{k}(t))\frac{e^{i\frac{(x-k)^2}{4t}}}{\sqrt t}.$$ 
As a byproduct this is a way to uniquely continuate the NLS solutions after a generation of singularites with phase loss.

\section{Fourier growth}
As said before, in the last decades there has been an intense activity on the analysis of turbulent dynamics related to dispersive equations, as for instance the growth of Sobolev norms since the work of Bourgain in the 90s. To quote just a few, such studies were done for non-integrable equations as the linear Schr\"odinger equation with potential, the 2D cubic NLS, systems of 1D cubic NLS (Bourgain \cite{Bourgain1995},\cite{Bourgain1996},\cite{Bourgain1999}, Kuksin \cite{Kuksin1996},\cite{Kuksin1997}, Staffilani \cite{Staffilani1997}, Colliander, Keel, Staffilani, Takaoka and Tao \cite{CollianderKeelStaffilaniTakaokaTao2010}, Sohinger \cite{Sohinger2011}, Carles and Faou \cite{CarlesFaou2012}, Gr\'ebert, Paturel and Thomann \cite{GrebertPaturelThomann2013}, Delort \cite{Delort2014}, Hani \cite{Hani2014}, Hani, Pausader, Tzvetkov and Visciglia \cite{HaniPausaderTzvetkovVisciglia2015}, Guardia and Kaloshin \cite{GuardiaKaloshin2015}, Planchon, Tzvetkov and Visciglia \cite{PlanchonTzvetkovVisciglia2017}, Bambusi, Gr\'ebert, Maspero and Robert \cite{BambusiGrebertMasperoRobert2018}, Carles and Gallagher \cite{CarlesGallagher2018}, Deng and Germain \cite{DengGermain2019}, Thomann \cite{Thomann2021}, Bambusi, Langella and Montalto  \cite{BambusiLangellaMontalto2022},  Giuliani and Guardia \cite{GiulianiGuardia2022}, Faou and Rapha\"el \cite{FaouRaphael2023}, Guardia, Hani, Haus, Maspero and Procesi \cite{GuardiaHaniHausMasperoProcesi2023}, ...)
In general the 1D cubic NLS and the Schr\"odinger map were left aside because of their complete integrability. We mention at this point that in the periodic defocusing case the $H^s$ norms with $n\geq 1$ were proved to remain bounded for all times by Kappeler, Schaad and Topalov (\cite{KappelerSchaadTopalov2017}). We note however that turbulent behavior was observed for abstract integrable equations as of Szeg\"o type (G\'erard and Grellier \cite{GerardGrellier2012},\cite{GerardGrellier2017}, Pocovnicu  \cite{Pocovnicu2011}, G\'erard, Lenzmann, Pocovnicu and Rapha\"el \cite{GerardLenzmannPocovnicuRaphael2018}, Biasi and Evnin \cite{BiasiEvnin2022}, G\'erard, Grellier and He \cite{GerardGrellierHe2022}, G\'erard and Lenzmann \cite{GerardLenzmann2024},...). 

In \cite{BanicaVega2020bis}-\cite{BanicaVega2022} we displayed a Fourier modes amplitude growth in time, observed in a frequency region that shifts in time, for the Schr\"odinger map \eqref{Smap}, 
that on one hand is directly connected via Hasimoto's transform with the 1D cubic NLS, and on the other hand is also directly connected to the binormal flow, which arises as a model for vortex filaments, as explained before.

More precisely, let us first recall that the Schr\"odinger map \eqref{Smap} has interaction energy: 
$$\int|T_x(t,x)|^2dx.$$
From now on we consider the binormal flow solutions of Theorem \ref{thevpoly} that are constructed from the NLS solutions $u_{\{\alpha_j\}}$ in Theorem \ref{thNLScrit}. Their tangent vector $T$ satisfies the Schr\"odinger map \eqref{Smap} strongly on $(-1,1)\setminus\{0\}$ and weakly on $(-1,1)$. These solutions  have infinite interaction energy since, in view of \eqref{ev}, $|T_x(t,x)|=|u_{\{\alpha_k\}}(t,x)|^2$ is a periodic function. However, we shall prove they have a new type of finite energy. 

We recall again that the Constantin-Fefferman-Majda blow-up criterium (\cite{ConstantinFeffermanMajda1996}) on the variation of the direction of vorticity, writes  $\int_0^t\|\nabla(\frac{\omega}{|\omega|})(\tau)\|_{L^\infty_x}^2d\tau=\infty$. Here, having in mind that the tangent vector $T$ models the direction of vorticity, see also evidence in this sense in Theorem 1 in \cite{FontelosVega2023}, we have:
$$\int_0^t\|T_x(\tau)\|_{L^\infty}^2d\tau=\int_0^t\|u_{\{\alpha_k\}}(\tau)\|_{L^\infty}^2d\tau=\int_0^t \frac {\| \sum_k((\alpha_k+R_k(\tau))e^{i\frac{(x-k)^2}{4\tau}})\|_{L^\infty}^2}{\tau}d\tau,$$
which is infinite for generic sequences $\{\alpha_j\}$ since $R_k(\tau)$ behaves as $\tau^{1^-}$ uniformly in $k$ as $\tau$ goes to zero. In \cite{BanicaVega2020bis}-\cite{BanicaVega2022} we analyzed $T_x$ in the phase variable. As a first result we identified a finite energy density framework which points out a growth at large frequencies as follows.

\begin{theorem}\label{thenergy}(A finite energy density framework, \cite{BanicaVega2020bis}) Let $T$ be the solution of \eqref{Smap} obtained as tangent vector of the binormal flow evolution of a polygonal line from Theorem \ref{thevpoly}. Then 
$$\Xi(t):=\underset{n\rightarrow\infty}{\lim}\int_n^{n+1}|\widehat{T_x}(t,\xi)|^2d\xi$$
is conserved for $t\in(0,1)$ with a discontinuity at $t=0$.
Moreover, there is an instantaneous growth for positive times at large frequencies:
$$\forall n, \quad\Xi(0)= \int_n^{n+1}|\widehat{T_x}(0,\xi)|^2d\xi=4\sum_k(1-e^{-\pi \alpha_k^2})< 4\pi\sum_k\alpha_k^2=\Xi(t).$$
\end{theorem}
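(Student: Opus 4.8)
The plan is to compute $\Xi(t)$ explicitly by understanding the Fourier structure of $T_x$, and then to analyze its limiting value at $t=0$ separately. Since $|T_x(t,x)| = |u_{\{\alpha_j\}}(t,x)|^2$ and $u_{\{\alpha_j\}}(t,x) = \sum_k A_k(t) \frac{e^{i(x-k)^2/4t}}{\sqrt t}$, the quantity $T_x$ is governed by a periodic (in $x$ with period related to $t$) object whose Fourier content over unit frequency intervals $[n,n+1]$ is what we must track. First I would write out $\widehat{T_x}(t,\xi)$ using the explicit superposition form from Theorem \ref{thNLScrit}, and observe that the averaging $\lim_{n\to\infty}\int_n^{n+1}|\widehat{T_x}(t,\xi)|^2\,d\xi$ selects an asymptotic density that filters out low-frequency/transient contributions. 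The mass-type conservation law already established in \S\ref{subsectproofNLS} (the conservation of $\sum_k|A_k(t)|^2$, and more generally the evolution identity for $\sum_k a(k)|A_k(t)|^2$) should be the engine that makes $\Xi(t)$ constant on $(0,1)$: I expect $\Xi(t)$ to reduce, after the frequency averaging, to a linear combination of conserved quantities like $\sum_k|\alpha_k|^2$, which is why it equals the constant $4\pi\sum_k\alpha_k^2$ for $t\in(0,1)$.

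The next step is the computation of $\Xi(0)$, which is genuinely different in character because at $t=0$ the solution degenerates to the polygonal line and each corner contributes through its self-similar profile. Here I would use the results from Theorem \ref{thevpoly}, in particular Steps 5--7 of its proof sketch, which show that near each $x=k$ the tangent vector converges to a rotated self-similar profile $\Theta_k(T_{a_k})$ with the jump across the corner governed by the angle relation $\sin\frac{\theta_k}{2}=e^{-\pi a_k^2/2}$ (in the normalization $a_k=\sqrt{-\frac2\pi\log\sin\frac{\theta_k}{2}}$ used in Theorem \ref{thevpoly}, so that $\sin\frac{\theta_k}{2}=e^{-\pi a_k^2/2}$ and $e^{-\pi a_k^2}=\sin^2\frac{\theta_k}{2}$). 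Since $T(0,\cdot)$ is piecewise constant with jumps at the integers, $T_x(0,\cdot)$ is a sum of Dirac masses, and $\widehat{T_x}(0,\xi)$ is then a trigonometric-type sum whose frequency-averaged energy over $[n,n+1]$ picks up the squared jump magnitudes. The squared magnitude of the jump of $T$ at the $k$-th corner is $|A^+_{a_k}-A^-_{a_k}|^2 = 2(1-\cos\theta_k)$ up to the rotation, and rewriting $1-\cos\theta_k = 2\sin^2\frac{\theta_k}{2} = 2e^{-\pi a_k^2} = 2e^{-\pi\alpha_k^2}$ is exactly what produces the factor $(1-e^{-\pi\alpha_k^2})$ after the Parseval/averaging bookkeeping yields the claimed $4\sum_k(1-e^{-\pi\alpha_k^2})$.

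The strict inequality $\Xi(0)<\Xi(t)$ is then immediate from the elementary bound $1-e^{-\pi\alpha_k^2}<\pi\alpha_k^2$ applied termwise and summed, which is why the energy exhibits an instantaneous jump: the $t>0$ density $4\pi\sum_k\alpha_k^2$ strictly exceeds the $t=0$ density $4\sum_k(1-e^{-\pi\alpha_k^2})$, with the gap measuring the energy radiated into the oscillatory smooth part for positive times. This discontinuity is the quantitative signature of the corner-smoothing transition.

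The main obstacle I anticipate is the rigorous justification of the frequency-averaging limit itself, namely showing that $\lim_{n\to\infty}\int_n^{n+1}|\widehat{T_x}(t,\xi)|^2\,d\xi$ exists and equals the claimed conserved quantity uniformly in $t\in(0,1)$. The difficulty is that $\widehat{T_x}(t,\cdot)$ is not square-integrable globally (the interaction energy is infinite, as noted), so $\Xi(t)$ is a density rather than a norm, and one must carefully separate the coherent self-similar contributions of each corner (which survive into the high-frequency average) from the cross terms between distinct $e^{it\Delta}\delta_k$, which oscillate and must be shown to average to zero. Controlling these cross terms requires the nonresonant phase estimates and the decay $R_k(t)=O(t^{1^-})$ from Theorem \ref{thNLScrit}, and matching the $t\to0^+$ limit of $\Xi(t)$ against the directly-computed $\Xi(0)$ demands quantitative control of the oscillatory integrals from Steps 5--7 uniformly in the frequency window. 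Everything else is then a matter of identifying conserved quantities and performing the trigonometric sum for the Dirac-comb Fourier transform.
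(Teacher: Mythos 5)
Your overall architecture is sound and is consistent with the approach the paper takes (compare the sketch given for Theorem \ref{thgrowth}, which manipulates $\widehat{T_x}(t,\xi)$ through $T_x=\Re(\overline u N)$ and the superposition $u=\sum_k A_k(t)e^{it\Delta}\delta_k$): for $t>0$ the off-diagonal terms in $|\widehat{T_x}(t,\xi)|^2$ must be shown to average out over $[n,n+1]$ as $n\to\infty$, the surviving diagonal contribution is identified with the conserved mass $\sum_k|A_k(t)|^2=\sum_k|\alpha_k|^2$, while at $t=0$ the tangent is piecewise constant, $T_x(0,\cdot)$ is a sum of Dirac masses at the integers, and Parseval for the resulting periodic Fourier transform gives $\Xi(0)=\sum_k|T(0,k^+)-T(0,k^-)|^2$. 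You also correctly locate the genuinely delicate point, namely the vanishing of the cross terms uniformly in $t$ and the matching of the two limits at $t=0$.

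There is, however, a concrete error in your evaluation of $\Xi(0)$ which, as written, breaks the statement. In the paper's convention the corner angle satisfies $\sin\frac{\theta_k}{2}=e^{-\pi a_k^2/2}$, so $a_k\to 0$ forces $\theta_k\to\pi$: a flat point has angle $\pi$, i.e.\ $\theta_k$ is the angle between the incoming and outgoing rays, and the angle between the two unit tangent vectors $T(0,k^-)$ and $T(0,k^+)$ is $\pi-\theta_k$. Hence
$$|T(0,k^+)-T(0,k^-)|^2=2-2\cos(\pi-\theta_k)=2+2\cos\theta_k=4\cos^2\tfrac{\theta_k}{2}=4\bigl(1-e^{-\pi a_k^2}\bigr),$$
which is exactly the summand in the theorem. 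Your formula $|A^+_{a_k}-A^-_{a_k}|^2=2(1-\cos\theta_k)=4\sin^2\frac{\theta_k}{2}=4e^{-\pi\alpha_k^2}$ uses the opposite convention; it would yield $\Xi(0)=4\sum_k e^{-\pi\alpha_k^2}$, a divergent series (since $\alpha_k\to 0$) that is in particular not dominated by $4\pi\sum_k\alpha_k^2$, so both the finiteness of $\Xi(0)$ and the strict inequality $1-e^{-\pi\alpha_k^2}<\pi\alpha_k^2$ you then invoke would be unavailable. The claim that your computation ``produces the factor $(1-e^{-\pi\alpha_k^2})$'' is therefore a non sequitur: you must replace $2(1-\cos\theta_k)$ by $2(1+\cos\theta_k)$ for the bookkeeping to deliver the stated value of $\Xi(0)$ and the instantaneous jump $\Xi(0)<\Xi(t)$.
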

In particular we can see $|\widehat{T_x}(t,\xi)|^2$ as an asymptotic energy density in phase space. Moreover, we observed an energy cascade in the following precise sense, even for solutions of \eqref{Smap} obtained as tangent vector of the binormal flow evolution of a polygonal line with only two corners from Theorem \ref{thevpoly}.
 
\begin{theorem}\label{thgrowth}{(Energy density growth, \cite{BanicaVega2022})}
Let $T_0:\mathbb R\rightarrow \mathbb S^2$, constant on $(-\infty,-1),(-1,1)$ and $(1,+\infty)$, with jumps of same angle $\theta\in (0,\pi)$ at $x\in\{-1,1\}$. Then there exists a solution $T$ of the Schr\"odinger map \eqref{Smap} on $t\in (0,1)$, with $T_0$ as trace at time $t=0$ in the sense that $T_0(x)=\lim_{t\rightarrow 0}T(t,x)$ for all $x\in\mathbb R$, satisfying:\\
i) there exists $C_\theta>0$ such that:
$$\sup_{\xi\in B( \pm \frac{1}{ t},\sqrt{t})}|\widehat{T_x}(t,\xi)|= C_\theta |\log t|.$$
ii) for $\xi\notin B( \frac{1}{t},\frac{3}{4 t})\cup B(- \frac{1}{t},  \frac{3}{4t})$ we have an upper-bound of $|\widehat{T_x}(t,\xi)|$ depending only on $\theta$.
\end{theorem}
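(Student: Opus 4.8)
The plan is to produce the solution as the tangent vector of the two-corner binormal flow evolution given by the special case of Theorem~\ref{thevpoly} in which only the corners at $x=\pm1$ are present, and then to reduce the Fourier analysis of $T_x$ to an oscillatory-integral computation in which a single interaction \emph{between the two corners} generates the resonant frequencies $\pm1/t$. Concretely, I would take $\{\alpha_k\}$ supported at $k=\pm1$ with $|\alpha_{\pm1}|=a$ fixed from $\theta$ by \eqref{angle} (so $a^2=-2\log\sin(\theta/2)$), apply Theorem~\ref{thNLScrit} to get the filament function $u=u_{\{\alpha_k\}}(t,x)=t^{-1/2}\sum_{k=\pm1}A_k(t)\,e^{i(x-k)^2/4t}$, and work in the parallel transport frame $(T,e_1,e_2)$ of the Hasimoto construction, in which, with $N=e_1+ie_2$, one has $T_x=\mathrm{Re}(\overline u\,N)$ and $N_x=-u\,T$. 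Then $\widehat{T_x}(t,\xi)=\int \mathrm{Re}(\overline u\,N)\,e^{-ix\xi}\,dx$ and everything rests on a precise enough description of $N$.

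The key step is the structural description of $N$, obtained from $N_x=-uT$ together with the fact that $T$ equals, up to $O(\sqrt t)$ and up to the shrinking self-similar layers $|x-k|\lesssim\sqrt t$, the three constant values of $T_0$. Integrating and analysing the incomplete Fresnel integrals $\int^x t^{-1/2}e^{i(s-k)^2/4t}\,ds$ by stationary phase past a corner and by integration by parts before a corner, one finds that $N$ acquires a fixed rotation across each corner, while \emph{before} a corner $k$ it carries an oscillatory chirp tail of size $\sim \frac{\sqrt t}{x-k}\,e^{i(x-k)^2/4t}$. Inserting this into $\overline u\,N$ and sorting the terms by their phase, the diagonal contributions (same corner in $\overline u$ and $N$) carry bounded phase and thus sit at bounded frequency, whereas the cross term pairing $\overline u$ at one corner with the chirp tail of $N$ from the other corner carries phase $[(x-k')^2-(x-k)^2]/4t=\pm x/t+O(1)$, i.e.\ exactly the resonant frequencies, with amplitude $\sim \overline{A_{k'}}A_k/(x-k)$.

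For part (i) I would evaluate this cross term near $\xi=\mp1/t$. Writing $\delta$ for the offset from $\mp1/t$ and changing variables $y=x-k$, it reduces up to bounded errors to $a^2\int_{\sqrt t}^{O(1)}\frac{e^{iy\delta}}{y}\,dy$, whose modulus is $\sim a^2\,|\log\sqrt t|=\tfrac{a^2}{2}|\log t|$ for $|\delta|\lesssim\sqrt t$, i.e.\ for $\xi\in B(\pm1/t,\sqrt t)$; taking the supremum over the ball gives $C_\theta|\log t|$ with $C_\theta$ proportional to $a^2$ (and, one checks, independent of the torsion angles, which only enter through the phase of $\overline{A_{k'}}A_k$). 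The $|\log t|$ is exactly the logarithmic divergence of $\int dy/y$ cut off at the self-similar scale $y\sim\sqrt t$, where the chirp-tail description of $N$ breaks down. For part (ii) I would show that for $\xi$ outside $B(1/t,\tfrac{3}{4t})\cup B(-1/t,\tfrac{3}{4t})$ every remaining piece is bounded by a constant depending only on $\theta$: the low-frequency/diagonal part is controlled by $\widehat{T_x}(t,0)=T(t,+\infty)-T(t,-\infty)$ and by non-stationary phase for $\xi\neq0$, while the cross terms are off-resonance there and decay by integration by parts.

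The main difficulty will be the second step: making the description of the frame $(T,N)$ quantitative enough to isolate precisely the one cross term producing each peak, while controlling the genuinely non-self-similar corrections uniformly. This forces one to handle at once the remainders $R_k(t)$ of Theorem~\ref{thNLScrit}, the $O(\sqrt t)$ deviation of $T$ from piecewise constant, and above all the matching between the outer chirp-tail regime and the inner self-similar layer $|x-k|\sim\sqrt t$; the oscillatory-integral estimates are sharpest exactly at this crossover scale and near $\xi=\pm1/t$, where the chirp's stationary point enters the domain of integration and creates the logarithmic growth.
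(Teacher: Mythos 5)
Your proposal is correct and follows essentially the same route as the paper: the two-corner solution of Theorem \ref{thevpoly}/\ref{thNLScrit}, the identity $T_x=\Re(\overline u\,N)$, and one use of $N_x=-uT$ (your pointwise chirp-tail expansion of $N$ is the same computation the paper performs by completing the square and integrating by parts away from the stationary point $x=k+2t\xi$), leading to the same resonant cross term between the two corners, the same $\int_{\sqrt t}^{O(1)}dy/y$ logarithm cut off at the self-similar scale, and the same constant proportional to $|\overline{\alpha_{-1}}\alpha_1|=a^2$. The difficulties you flag (the remainders $R_k$, the $O(\sqrt t)$ deviation of $T$ from $T_0$, the inner layer $|x-k|\lesssim\sqrt t$) are exactly the ones the paper disposes of by excising $B(k+2t\xi,\sqrt t)$ and invoking the quantitative convergence estimate $|T(t,x)-T(0,x)|\lesssim(1+|x|)\sqrt t\,(d(x,\tfrac12\mathbb Z)^{-1}+d(x,\mathbb Z)^{-1})$ from \cite{BanicaVega2020}.
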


The growth is valid also for polygonal lines with many corners. This results confirms the numeric results obtained by de la Hoz and Vega in \cite{DelaHozVega2018} for periodic piecewise constant data, in the case when  $T_0$ is the tangent vector of a regular polygon. Last but not least, let us underline that the growth of $T_x$ is in terms of the Fourier-Lebesgue norm $\mathcal FL^\infty$, which is critical with respect to scaling for the 1D cubic NLS equation \eqref{CubicNLS} to which \eqref{Smap} is linked via Hasimoto's method.

Let us also note that in view of equations \eqref{ev} satisfied by $T$ and of the particular oscillatory ansatz of $u$ in Theorem \ref{thNLScrit} from which $T$ is constructed, we have:
$$T_t=-\frac x{2t}\Re(\overline{u}\,N) + \text{ Remainder term},\quad T_x=\Re(\overline u\, N).$$
Passing in Fourier gives:
$$
\partial_t\widehat{T_x}=\frac{\xi}{2t}\partial_\xi \widehat{T_x}+ \text{ Remainder term}.
$$
This phenomena is reminiscent of the works of 
Apolin\'ario et al. \cite{ApolinarioBeckChevillardGallagherGrande2023}-\cite{ApolinarioChevillardMourrat2022}
where they proposed abstract linear equations that mimic the phenomenology of energy cascades when the external force is a statistically homogeneous and stationary stochastic process. 
Indeed, these equations have a transport rate in wavenumber space, 
independent of time and frequency (see (2.1) in \cite{ApolinarioBeckChevillardGallagherGrande2023}).
Existence of energy cascades for linear systems have been also proved by Colin de Verdi\`ere and Saint-Raymond in \cite{deVerdiereSaintRaymond2020}.

\subsubsection{Sketch of the proof of Theorem \ref{thgrowth}}
We consider $\chi_0$ a polygonal line with two corners, having $T_0$ as tangent vector. We can thus apply Theorem \ref{thevpoly} to get  a binormal flow solution $\chi$ with data $\chi_0$, and a Schr\"odinger map solution $T=\partial_x\chi$ with initial data $T_0$. From the construction in the proof of Theorem \ref{thevpoly} we also get that $T$ can be completed to a frame $(T,e_1,e_2)$ such that $T$ and $N=e_1+ie_2$ satisfy \eqref{ev} for some solution $u_{\{\alpha_j\}}$ of type \eqref{superpsol}. We aim to show a $\log t$ growth of $\widehat{T_x}(t,\xi)$ for frequencies $\xi\in B(\frac 1t,\sqrt{t})$. We have, by using $T_x=\Re(\overline{u}N)$ from \eqref{ev}:
$$\widehat{T_x}(t,\xi)=\int e^{ix\xi}\,\Re(\sum_k\overline{A_k(t)}\frac{e^{-i\frac{(x-k)^2}{4t}}}{\sqrt{t}} N(t,x))dx$$
$$=\frac{e^{it\xi^2}}{2\sqrt{t}}\sum_{k}e^{i k\xi}\,\overline{A_k(t)}\int e^{-i\frac{(x-k-2 t\xi)^2}{4t}}N(t,x)dx+ \mbox{ a similar term.}$$
We remove $ B(k+2 t\xi,\sqrt{t})$ from the domain of integration, as its contribution is of order $1$. Then on the remaining domain we use integrations by parts and the expression $N_x=-uT$ from \eqref{ev} to reduce to:
$$ i\sum_{k,j}\,\overline{A_k(t)}A_j(t) e^{-i\frac{k^2-j^2}{4t}}\int e^{i\frac{x(k-j+2 t\xi)}{2t}}\frac{1_{|_{x-k-2 t\xi>\sqrt{t}}}}{x-k-2t\xi}\,T(t,x)\,dx.$$
If $|k- j+ 2 t\xi|>\sqrt{t}$ the corresponding term is proved to be bounded also by using several integrations by parts involving \eqref{ev}. We thus get the general estimate, that indicate where to look for a potential growth:
$$\left|\widehat{T_x}(t,\xi)-i\sum_{|k- j+ 2 t\xi|<\sqrt{t}}\overline{\alpha_k+R_k(t)}(\alpha_j+R_j(t))\,z\right.$$
$$\left.\times\int e^{i\frac{x( k- j+ 2 t\xi)}{2t}}\left(\frac{1_{|_{x-k-2 t\xi>\sqrt{t}}}}{x-k-2 t\xi}-\frac{1_{|_{x-j+2 t\xi>\sqrt{t}}}}{x-j+ 2 t\xi}\right)\,T(t,x)\,dx\right|\leq C,$$
where 
$$z=e^{-i(|\alpha_k|^2-|\alpha_j|^2)\log t}e^{-i\frac{k^2-j^2}{4t}}.$$
Considering $\xi\in B(\frac{1}{t}, \sqrt{t})$ implies $2 t\xi\in B(2,2t\sqrt{t})$, so the summation condition implies:
$$|k- j+ 2t \xi|<\sqrt{t}\Longrightarrow j=k+2.$$
Thus, restricting to polygonal lines with finite number of corners we end up with an estimate that writes for the two corners case (when $\alpha_j=0$ for $|j|\geq 2$), for $\xi\in B(\frac{1}{t}, \sqrt{t})$:
$$
\left|\widehat{T_x}(t,\xi)-i\overline{\alpha_{-1}}\alpha_{1}\,z \int\left(\frac{1_{|_{x-1>\sqrt{t}}}}{x-1}-\frac{1_{|_{x+1>\sqrt{t}}}}{x+1}\right)T(t,x)dx\right|\leq C.$$
In particular we have
$$|\widehat{T_x}(t,\xi)-i\overline{\alpha_{-1}}\alpha_{1}\,z\, \int_{\sqrt{t}<|x-1|<\frac 13\}\cup\{\sqrt{t}<|x+1|<\frac 13\}}(\frac{1}{x-1}-\frac{1}{x+1})\,T(t,x)\,dx|\leq C.$$
Next we use the following result on the convergence of the tangent vector from \cite{BanicaVega2020}:
$$|T(t,x)-T(0,x)|\leq C(\|\{\alpha_j\}\|_{l^{1,1}})(1+|x|)\sqrt{t}\left(\frac 1{d(x,\frac 12\mathbb Z)}+\frac 1{d(x,\mathbb Z)}\right),$$
to get
$$|\widehat{T_x}(t,\xi)-i\overline{\alpha_{-1}}\alpha_{1}\,z\,\int_{\{\sqrt{t}<|x-1|<\frac 13\}\cup\{\sqrt{t}<|x+1|<\frac 13\}}(\frac{1}{x-1}-\frac{1}{x+1})\,T(0,x)\,dx|\leq C.$$
As $T(0,x)$ is piecewise constant direct integration yields a $\log t$ growth.\\

\section{Talbot effects} \label{section-Talbot}
In 1836 Talbot, inventor of photography independently of Daguerre, observed a diffraction effect of light. By illuminating a grating that has equally spaced transparent and opaque slits, the pattern of the grating can be observed away from the grating at a distance called nowadays Talbot distance. Also, at rational fractions $\frac pq$ of the Talbot distance the image observed consists of $q$ overlapping copies of the pattern, see Figure 5. Then the Talbot effect has been forgotten, rediscovered by Lord Rayleigh in 1881 and forgotten again. Nowadays it is well-known and also known to be related to a phenomenon in quantum physics called quantum revivals, that concerns reconstruction after a finite time of a wave packet, see Figure 5. For more details on this topic see the expository article \cite{BerryMarzoliSchleich2001} of Berry, Marzoli and Schleich.

\begin{center}
$$\includegraphics[width=1.5in]{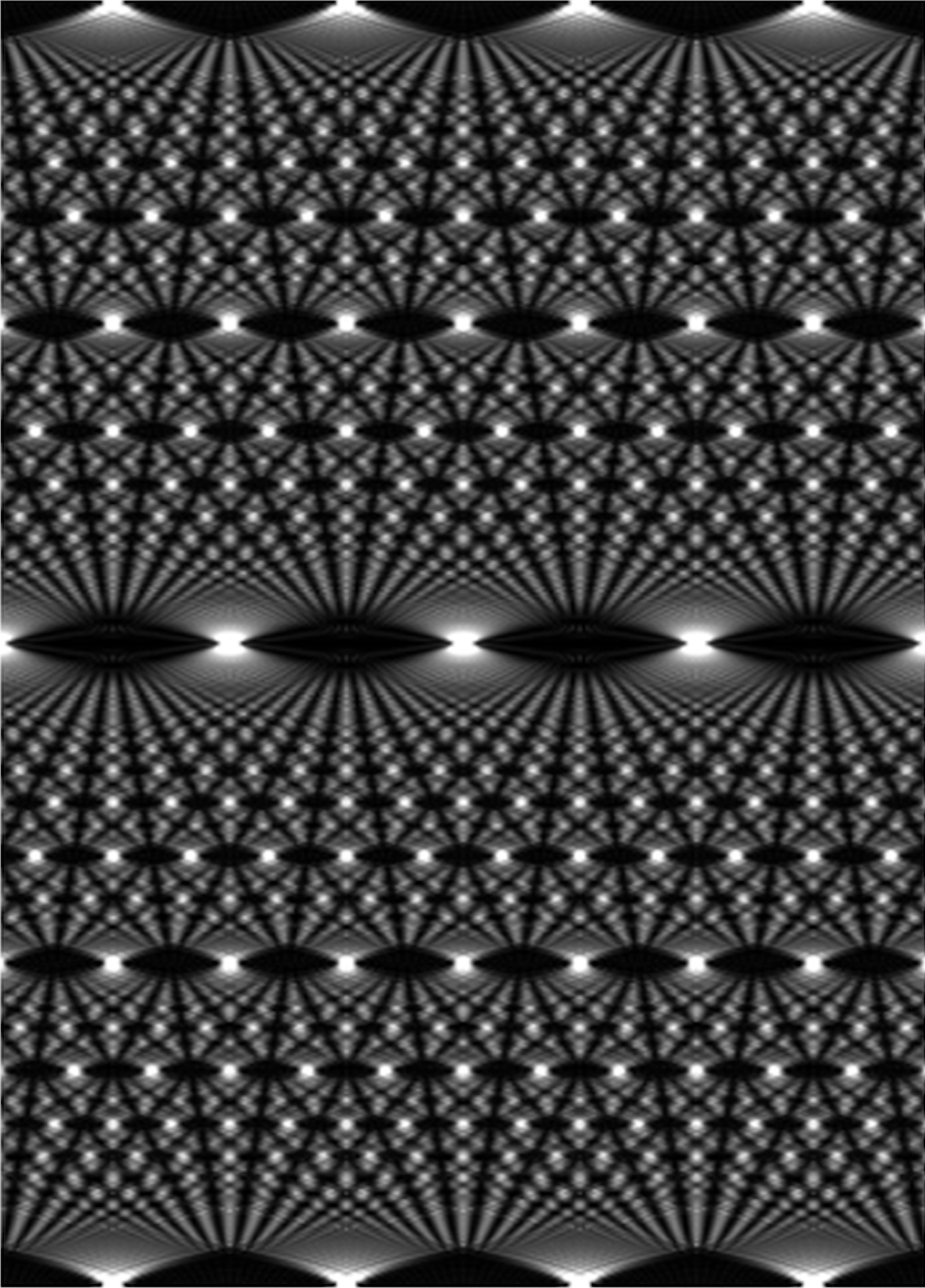}\quad \includegraphics[width=2.1in]{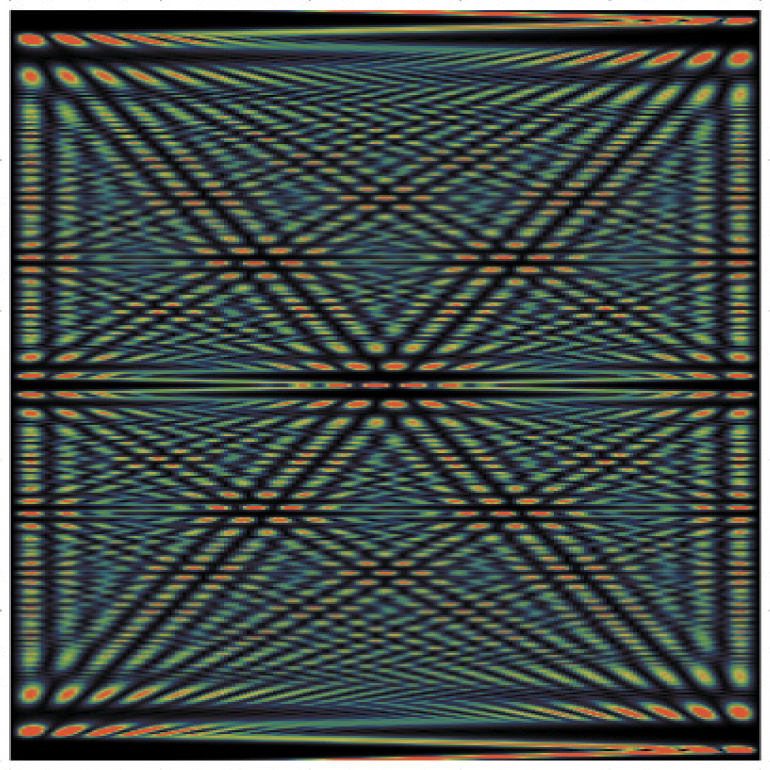}$$
{\small{Figure 5. Left: the optical effect (space and distance axis). Right: A quantum carpet i.e. plots of probability density for the propagation of a Gaussian wavepacket in a 1D box of length 1 on a revival time interval rescaled to 1 (space and time axis), from \cite{BerryMarzoliSchleich2001}.}}
\end{center}

This effect can be explained by the following fact. Adopting the quantum framework, the Dirac comb evolution through the linear Schr\"odinger equation, i.e. the fundamental solution of the periodic Schr\"odinger equation, is, by using Poisson summation formula:
$$e^{it\Delta}(\sum _{k\in\mathbb Z}\delta_k)(x)=e^{it\Delta}(\sum_{k\in\mathbb Z}e^{i2\pi kx})=\sum_{k\in\mathbb Z}e^{-it(2\pi k)^2+i2\pi kx},$$
so for $t_{p,q}=\frac 1{2\pi}\frac pq$ we have 
$$e^{it_{p,q}\Delta}(\sum _{k\in\mathbb Z}\delta_k)(x)=\frac 1{q}\sum_{l\in\mathbb Z}\sum_{m=0}^{q-1}G(-p,m,q)\delta_{l+\frac mq},$$
where $G(-p,m,q)$ stands for the Gauss sum
$$G(-p,m,q)=\sum_{l=0}^{q-1}e^{2\pi i\frac{-pl^2+ml}{q}}.$$

The linear and nonlinear Schr\"odinger evolution on the torus of functions with bounded variation was proved to present Talbot effect features by Berry \cite{Berry1996}, Berry and Klein \cite{BerryKlein1996}, Oskolkov \cite{Oskolkov1992}, Kapitanski  and Rodnianski \cite{KapitanskiRodnianski1999}, Rodnianski \cite{Rodnianski2000}, Erdogan and Tzirakis \cite{ErdoganTzirakis2013}). We note that also other dispersive equations enjoy Talbot effect, as shown for instance by Boulton, Farmakis, Pelloni and Smith \cite{BoultonFarmakisPelloniSmith2024}. 

In the Dirac deltas more singular setting on the torus, De la Hoz and Vega, by supposing uniqueness of solutions, founded in \cite{DelaHozVega2014} a solution of the 1D cubic Schr\"odinger equation \eqref{CubicNLS} similar to the fundamental solution above, thus exhibiting a Talbot effect. Then they constructed curves, which have as filament function this solution at rational times $t_{p,q}=\frac 1{2\pi}\frac pq$, that are skew polygons with corners located at $\frac 1q$ distances, thus again a Talbot effect.  

In \cite{BanicaVega2020} we placed ourselves in the Dirac deltas singular setting on $\mathbb R$. We proved a Talbot effect for the 1D cubic Schr\"odinger equation \eqref{CubicNLS} at this rough regularity. To do so we first noted a Talbot effect at the level of linear solutions, as follows. 

\begin{theorem}\label{thTalbotlin}(A Talbot effect for linear evolutions of Dirac type, \cite{BanicaVega2020})
Let $p\in\mathbb N, \eta\in(0,1)$ and $u_0$ be such that $\hat{u_0}$ is $2\pi-$periodic with $\hat u_0$ supported modulo $2\pi$ in $B(0,\eta\frac {\pi}{ p})$. For $x\in\mathbb R$ we define:
{\small{$$\xi_{x} :=\frac {\pi q}p \,dist\left (x,\frac 1q\mathbb Z\right) \in[0,\frac \pi p).$$}}
Then, for $t_{p,q}=\frac 1{2\pi}\frac pq$ with $q$ odd there exists $\theta_{x,p,q}\in\mathbb R$ such that:
{\small{$$e^{it_{p,q}\Delta}u_0(x)=\frac {1}{\sqrt{q}} \,\hat{u_0}(\xi_x)\, e^{-it_{p,q}\,\xi_x^2+ix\,\xi_x+i\theta_{x,p,q}}.$$}}
In particular, $e^{it_{p,q}\Delta}u_0(x)$ has $\frac 1q$-periodic modulus and vanishes if $d(x,\frac 1q\mathbb Z)>\frac \eta{q}$. 

Moreover, linear evolutions can concentrate near $\frac1q\mathbb Z$. More precisely, there is a family of initial data $u_0^\lambda=\sum _{k\in\mathbb Z}\alpha_k^\lambda\delta_k$ such that there exists $C>0$ and:
$$
\left|\frac{e^{it_{p,q}\Delta}u_0^\lambda(0)}{e^{it_{p,q}\Delta}\alpha_0^\lambda\delta_0(0)}\right|=C\frac{\sqrt{p}}{q}\lambda\overset{\lambda\rightarrow\infty}{\longrightarrow}\infty.$$
\end{theorem}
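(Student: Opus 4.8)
The identity is a stationary-phase-free, purely arithmetic computation: I would diagonalize the linear flow in Fourier, exploit the $2\pi$-periodicity of $\hat u_0$ to fold the frequency integral, and show that at the rational time $t_{pq}$ the Gaussian phases collapse into the Gauss sums $G(-p,m,q)$ already introduced in this section, localizing the output to the lattice $\frac1q\mathbb Z$. The small support of $\hat u_0$ then isolates a single surviving frequency, and the classical evaluation of complete Gauss sums for odd $q$ produces the $\tfrac1{\sqrt q}$ amplitude.

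\textbf{Step 1 (folding the frequency integral).} Using $\widehat{e^{it\Delta}u_0}(\xi)=e^{-it\xi^2}\hat u_0(\xi)$, I write (neglecting the $2\pi$ constants as throughout the paper)
$$e^{it\Delta}u_0(x)=\int e^{-it\xi^2+ix\xi}\,\hat u_0(\xi)\,d\xi.$$
Since $\hat u_0$ is $2\pi$-periodic and supported modulo $2\pi$ in $B(0,\eta\tfrac{\pi}{p})$, I substitute $\xi=2\pi n+\zeta$ with $\zeta\in B(0,\eta\tfrac{\pi}{p})$ and interchange sum and integral, factoring out the $n$-independent phase $e^{-it\zeta^2+ix\zeta}$.

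\textbf{Step 2 (rational time and Gauss sums).} At $t=t_{pq}=\tfrac1{2\pi}\tfrac{p}{q}$ the phase splits as $e^{-it(2\pi n+\zeta)^2+i2\pi nx}=e^{-i\frac{2\pi p n^2}{q}}\,e^{in\beta}$ up to the factored term, where $\beta:=2\pi x-\tfrac{2p\zeta}{q}$. The quadratic factor $e^{-i\frac{2\pi p n^2}{q}}$ is $q$-periodic in $n$, so splitting $n=mq+r$ with $0\le r\le q-1$ gives a product of the $m$-sum $\sum_m e^{imq\beta}$, which is a Dirac comb forcing $\beta\in\tfrac{2\pi}{q}\mathbb Z$, and the $r$-sum $\sum_{r=0}^{q-1}e^{2\pi i(-pr^2+lr)/q}=G(-p,l,q)$. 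The constraint $\beta=\tfrac{2\pi l}{q}$ selects $\zeta=\zeta_l:=\tfrac{\pi(qx-l)}{p}$, so that
$$e^{it_{pq}\Delta}u_0(x)\;\sim\;\frac1q\sum_{l}G(-p,l,q)\,\hat u_0(\zeta_l)\,e^{-it_{pq}\zeta_l^2+ix\zeta_l}.$$

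\textbf{Step 3 (localization and the odd-$q$ Gauss sum).} The support condition $|\zeta_l|<\eta\tfrac{\pi}{p}$ forces $|qx-l|<\eta<1$, leaving only $l=l_0$, the nearest integer to $qx$; then $|\zeta_{l_0}|=\tfrac{\pi}{p}\,\mathrm{dist}(qx,\mathbb Z)=\xi_x$. Since $\zeta_{l_0}^2=\xi_x^2$, the quadratic phase $e^{-it_{pq}\xi_x^2}$ and the linear phase $e^{ix\xi_x}$ appear exactly, while the argument of $G(-p,l_0,q)$ and the sign of $\zeta_{l_0}$ are absorbed into $\theta_{x,p,q}$. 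The decisive input is that for $q$ odd and $\gcd(p,q)=1$ the complete Gauss sum satisfies $|G(-p,l,q)|=\sqrt q$ for \emph{every} $l$; this yields the amplitude $\tfrac1{\sqrt q}|\hat u_0(\xi_x)|$, the $\tfrac1q$-periodicity of the modulus (as $\xi_x$ is $\tfrac1q$-periodic in $x$), and the vanishing when $\mathrm{dist}(x,\tfrac1q\mathbb Z)>\tfrac\eta q$ since then $\xi_x>\eta\tfrac{\pi}{p}$ falls outside the support. \emph{I expect the arithmetic of the Gauss sum — precisely the constant modulus $\sqrt q$ for odd $q$ and the uniqueness of the surviving lattice point $l_0$ — to be the one genuinely nontrivial point}; the parity of $q$ is exactly what guarantees no cancellation or degenerate doubling of the sum.

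\textbf{Step 4 (concentration at a focal point).} For the final assertion I take $u_0^\lambda=\sum_k\alpha_k^\lambda\delta_k$ with $\{\alpha_k^\lambda\}$ the Fourier coefficients of a bump $\phi_\lambda$ supported in $B(0,\eta\tfrac{\pi}{p})$ that concentrates at $0$ as $\lambda\to\infty$, so $\hat u_0^\lambda=\phi_\lambda$. Since $0\in\tfrac1q\mathbb Z$ gives $\xi_0=0$, the point $x=0$ is a Talbot focal point: by Step 3 the numerator has size $\tfrac1{\sqrt q}|\hat u_0^\lambda(0)|=\tfrac1{\sqrt q}\,\phi_\lambda(0)$, whereas the single-delta evolution has modulus $|e^{it_{pq}\Delta}\alpha_0^\lambda\delta_0(0)|=\tfrac{|\alpha_0^\lambda|}{\sqrt{t_{pq}}}\sim|\alpha_0^\lambda|\sqrt{\tfrac{q}{p}}$ with $\alpha_0^\lambda$ the mean of $\phi_\lambda$. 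Forming the ratio, the $q$-dependence combines to $\tfrac{\sqrt p}{q}$ and the remaining factor $\tfrac{\phi_\lambda(0)}{\text{mean}(\phi_\lambda)}$ grows linearly in $\lambda$ as the bump sharpens, which is exactly the claimed blow-up $C\tfrac{\sqrt p}{q}\lambda\to\infty$ and reflects constructive interference of the comb at the focal point.

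\endinput
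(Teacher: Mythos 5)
Your proposal is correct and follows essentially the same route as the paper: fold the frequency integral using the $2\pi$-periodicity of $\hat u_0$, evaluate the resulting lattice sum at $t_{pq}$ via the $n=mq+r$ splitting into a Dirac comb times the Gauss sums $G(-p,l,q)$, isolate the single surviving $l_0$ from the support condition, invoke $|G(-p,l,q)|=\sqrt q$ for odd $q$, and obtain the concentration by taking $\widehat{u_0^\lambda}$ to be a sharpening bump $\lambda\psi(\lambda\xi)$ so that the ratio of $\phi_\lambda(0)$ to its mean grows like $\lambda$. The only cosmetic difference is that the paper quotes the periodic Talbot identity for $e^{it_{pq}\Delta}\sum_k\delta_k$ as a known formula where you rederive it inline, and your treatment of the sign of $\zeta_{l_0}$ (absorbed into $\theta_{x,p,q}$) matches the paper's level of precision.
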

At the end of this section we shall give the sketch of the proof.  
The concentration phenomena is obtained by taking a sequence of initial data $\{u_0^\lambda\}$ that focus in Fourier variable near $2\pi$-integers: 
$$\widehat{u_0^\lambda}_{|(-\pi,\pi)}(\xi)=\lambda\psi(\lambda \xi)=\sum_k\alpha_k^\lambda e^{ik\xi},$$ 
with $\psi$ a bump function. This can be seen as a sequence approaching a Dirac comb. We note that the Dirac comb satisfies the hypothesis of the periodicity and localization in Fourier, since it equals to its Fourier transform. Therefore we recover the known Talbot effect explained above. However, this kind of data does not enter the nonlinear framework. Nevertheless, we obtain a Talbot effect also at the nonlinear level, at the same rough regularity, by using the solutions in Theorem \ref{thevpoly} that have as a leading term a linear evolution of a sum of Dirac deltas. 

\begin{theorem}\label{thTalbot}(A Talbot effect for nonlinear evolutions of Dirac type, \cite{BanicaVega2020})
Let $p\in\mathbb N, \eta\in(0,1)$, $s>\frac 12$ and $\epsilon\in (0,1)$. Let $u_0$ such that $\hat u_0$ is a $2\pi-$periodic, supported modulo $2\pi$ in $B(0,\eta\frac {\pi}{ p})$ , and having a Fourier coefficient sequence $\{\alpha_k\}$ satisfying $\|\{\alpha_k\}\|_{ l^{2,s}}:=(\sum_k|\alpha_k|^2(1+k^2)^s)^\frac 12=\epsilon$. 
 Let $u(t,x)$ be the solution of the 1D cubic Schr\"odinger equation \eqref{CubicNLS} on $(0,T)$ obtained in Theorem \ref{thNLScrit} from the sequence $\{\alpha_k\}$. Then for $t_{p,q}=\frac 1{2\pi}\frac pq$ with $1<q$ odd such that $\epsilon^2\sqrt{q}\log q<\frac 12$ the function $u(t,x)$ almost vanishes, in the sense:
$$|u(t_{p,q},x)|\leq \epsilon,\quad \mbox{if } d(x,\frac 1q\mathbb Z)>\frac \eta{q}.$$

Moreover, nonlinear solutions can concentrate near $\frac1q\mathbb Z$ in the sense that there is a family of sequences $\{\alpha_k^\lambda\}$ with $\|\{\alpha_k^\lambda\}\|_{l^{2,s}}\overset{\lambda\rightarrow \infty}{\longrightarrow}0$, such that the solutions $u^\lambda$ of \eqref{CubicNLS} obtained in Theorem \ref{thNLScrit} from the sequence $\{\alpha_k\}$ satisfy:
$$
\left|\frac{u^\lambda(t_{p,q},0)}{e^{it_{p,q}\Delta}\alpha_0^\lambda\delta_0(0)}\right|\overset{\lambda\rightarrow\infty}{\longrightarrow}\infty.
$$
\end{theorem}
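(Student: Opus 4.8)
The plan is to exploit the decomposition of the nonlinear solution from Theorem \ref{thNLScrit} into a purely linear leading term plus a cubically small error, and to treat the two pieces by completely different mechanisms. Writing $M:=\sum_j|\alpha_j|^2$ and factoring the common phase $e^{2iM\log t}$ out of the coefficients $A_k(t)=e^{-i(|\alpha_k|^2-2M)\log t}(\alpha_k+R_k(t))$ in \eqref{superpsol}, one has, with $u_0=\sum_k\alpha_k\delta_k$,
$$u(t,x)=e^{2iM\log t}\Big[e^{it\Delta}u_0(x)+E(t,x)\Big],\qquad E(t,x)=\sum_k D_k(t)\,\frac{e^{i\frac{(x-k)^2}{4t}}}{\sqrt t},$$
where $D_k(t)=(e^{-i|\alpha_k|^2\log t}-1)\alpha_k+e^{-i|\alpha_k|^2\log t}R_k(t)$ collects the $k$-dependent modulation defect and the remainder. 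Since $\widehat{u_0}$ is exactly the $2\pi$-periodic function with coefficients $\{\alpha_k\}$ and support inside $B(0,\eta\pi/p)$ modulo $2\pi$, Theorem \ref{thTalbotlin} applies verbatim to the leading term: at $t=t_{pq}$ the modulus $|e^{it_{pq}\Delta}u_0(x)|$ equals $q^{-1/2}|\widehat{u_0}(\xi_x)|$ and vanishes whenever $d(x,\tfrac1q\mathbb Z)>\eta/q$.

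For the almost-vanishing statement it then remains to bound $|E(t_{pq},x)|$ by $\epsilon$ uniformly in $x$, and here a crude estimate suffices precisely because $E$ is cubically small. Using $|e^{-i|\alpha_k|^2\log t}-1|\le|\alpha_k|^2|\log t|$ and $1/\sqrt{t_{pq}}\simeq\sqrt q$ one gets
$$|E(t_{pq},x)|\le\frac1{\sqrt{t_{pq}}}\sum_k|D_k(t_{pq})|\lesssim\sqrt q\Big(|\log t_{pq}|\sum_k|\alpha_k|^3+\|R(t_{pq})\|_{l^1}\Big).$$
Bounding $\sum_k|\alpha_k|^3\le\|\alpha\|_{l^\infty}\|\alpha\|_{l^2}^2\le\epsilon^3$ and $|\log t_{pq}|\simeq\log q$, the modulation part is $\lesssim\epsilon\cdot(\epsilon^2\sqrt q\log q)<\epsilon/2$ by the hypothesis $\epsilon^2\sqrt q\log q<1/2$; this is exactly what that condition is calibrated to secure. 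The remainder part is handled by the control $\|R(t_{pq})\|_{l^{2,s}}\lesssim t_{pq}^\gamma$ from Theorem \ref{thNLScrit} together with its cubic smallness in $\{\alpha_k\}$, producing a factor $q^{1/2-\gamma}\le1$ for $\gamma>1/2$; choosing $\gamma$ close to $1$ makes this term $\le\epsilon/2$ as well. Combining the two yields $|u(t_{pq},x)|\le\epsilon$ off the $\frac1q$-lattice.

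For the concentration statement I would take the focusing family $\widehat{u_0^\lambda}(\xi)=\mu_\lambda\psi(\lambda\xi)$ from Theorem \ref{thTalbotlin}, with $\psi$ a fixed bump satisfying $\psi(0)\ne0$, but now choose the amplitude $\mu_\lambda$ (for instance $\mu_\lambda=\lambda^{-s}$) so that $\|\{\alpha_k^\lambda\}\|_{l^{2,s}}\simeq\mu_\lambda\lambda^{s-1/2}\to0$ while keeping $\lambda\to\infty$; the support $B(0,1/\lambda)$ lies in $B(0,\eta\pi/p)$ for large $\lambda$, so Theorem \ref{thNLScrit} applies with arbitrarily small data. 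The ratio in Theorem \ref{thTalbotlin} is amplitude-independent, so the linear value at $x=0$ still satisfies $|e^{it_{pq}\Delta}u_0^\lambda(0)|/|e^{it_{pq}\Delta}\alpha_0^\lambda\delta_0(0)|=C\sqrt p\,\lambda/q\to\infty$. Fixing $q$ and letting $\lambda\to\infty$, the decomposition above gives $u^\lambda(t_{pq},0)=e^{2iM\log t}[e^{it_{pq}\Delta}u_0^\lambda(0)+E^\lambda(t_{pq},0)]$, and the same crude bound shows $|E^\lambda(t_{pq},0)|$ is cubically small in $\mu_\lambda$, hence negligible relative to the concentrated linear value $|e^{it_{pq}\Delta}u_0^\lambda(0)|\simeq\mu_\lambda/\sqrt q$; dividing, the nonlinear ratio inherits the $\sim\lambda$ blow-up.

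The main obstacle I expect is the $k$-dependent modulation phases $e^{-i|\alpha_k|^2\log t}$: they are precisely what prevents applying the linear Talbot theorem directly to the full nonlinear coefficient sequence $\{A_k(t)\}$, since their presence destroys the clean Fourier-support structure that produces the Gauss-sum cancellation. Relegating them to the error term is viable only because one can pay the resulting $|\log t|\simeq\log q$ and the $1/\sqrt t\simeq\sqrt q$ losses against the quadratic gain $|\alpha_k|^2$. Arranging the bookkeeping so that these three $q$-dependent factors combine into exactly $\epsilon^2\sqrt q\log q$, and checking that the remainder $R_k$ is genuinely cubic so that its contribution too vanishes uniformly in $q$, is the delicate point; everything else reduces to the already-established linear Talbot effect.
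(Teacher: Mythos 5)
Your decomposition $u=e^{2iM\log t}\bigl[e^{it\Delta}u_0+E\bigr]$, with the linear Talbot effect of Theorem \ref{thTalbotlin} applied to the leading term and the modulation defect $(e^{-i|\alpha_k|^2\log t}-1)\alpha_k$ plus the remainder $R_k$ dumped into an $\ell^1$-type error paying the $\sqrt q$ and $\log q$ losses against the hypothesis $\epsilon^2\sqrt q\log q<\tfrac12$, is exactly the mechanism the paper uses (the survey itself only states that the nonlinear solutions ``have as a leading term a linear evolution of a sum of Dirac deltas'' and defers details to \cite{BanicaVega2020}). The one point you rightly flag --- that $\|R(t)\|_{l^{2,s}}\lesssim t^\gamma$ must come with a constant that is (super)linearly small in $\|\{\alpha_k\}\|_{l^{2,s}}$, which is a property of the fixed-point construction rather than of the bare statement of Theorem \ref{thNLScrit} --- is indeed what makes the remainder term close, and the concentration argument with the amplitude-rescaled focusing family is likewise consistent with the linear construction.
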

Let us notice that despite the smallness condition on $\{\alpha_k\}$, the corresponding solution $u(t)$ from Theorem \ref{thNLScrit} is large for small times in $L^\infty$ and  $L^1_{loc}$, due to its $\frac 1{\sqrt t}$ factor. Going either backward or forward in time this, combined with the Talbot effect in Theorem \ref{thTalbot}, gives a phenomenon of constructive/destructive interference. More precisely, in Theorem 5.1 in \cite{BanicaVega2024} we proved the existence of rational times $\frac 1{2\pi}\frac pq$ and $\frac 1{2\pi}\frac {\tilde p}{\tilde q}$ such that on the interval $[-\frac1{2\tilde q},\frac 1{2\tilde q}]$ we observe at $\frac 1{2\pi}\frac pq$ almost-periodic small waves and at $\frac 1{2\pi}\frac {\tilde p}{\tilde q}$ a localized large-amplitude structure emerges. On one hand this can be seen as a creation of rogue waves in the sense of the dispersive blow-up, i.e. $L^\infty$-pointwise blow-up. This phenomena of dispersive blow-up was proved by Bona and Saut in \cite{BonaSaut2010} for the linear and the cubic Schr\"odinger equation on $\mathbb R$ by using smooth data of type $e^{ix^2}(1+x^2)^{-m}$ with $m\in (\frac 14,\frac 12]$. On the other hand, by using the pseudo-conformal transformation, our rogue waves result transfers to the periodic cubic Schr\"odinger equation \eqref{NLSt}, equation that has a coefficient $\frac 1t$ in front of the nonlinearity. In the context of the classical periodic cubic Schr\"odinger equation with random data recent progress in the rogue waves phenomena was done by Garrido, Grande, Kurianski and Staffilani in \cite{GarridoGrandeKurianskiStaffilani2023}. 

Finally, as a consequence of Theorem \ref{thTalbot}, and by using Theorem \ref{thevpoly}, we constructed in \cite{BanicaVega2020} families of binormal flow evolutions of polygonal lines that have at rational-type times $2\pi\frac pq$ curvature concentrating more and more at $\mathbb Z/q$ and getting smaller and smaller in between such locations.

\subsubsection{Sketch of the proof of Theorem \ref{thTalbotlin}}
Since $\hat{u_0}(\xi)$ is periodic we write:
$$e^{it\Delta}u_0(x)=\frac 1{2\pi}\int_{-\infty}^\infty e^{ix\xi}e^{-it\xi^2}\hat{u_0}(\xi)\,d\xi=\frac 1{2\pi}\sum_{k\in\mathbb Z}\int_{2\pi k}^{2\pi(k+1)} e^{ix\xi-it\xi^2}\hat{u_0}(\xi)\,d\xi$$
$$=\frac 1{2\pi}\int_0^{2\pi}\hat{u_0}(\xi)\sum_{k\in\mathbb Z}e^{ix(2\pi k+\xi)-it(2\pi k+\xi)^2}\,d\xi$$
$$=\frac 1{2\pi}\int_0^{2\pi}\hat{u_0}(\xi)e^{-it\xi^2+ix\xi}\sum_{k\in\mathbb Z}e^{-it\,(2\pi k)^2+i2\pi k (x- 2t\xi )}\,d\xi.$$
For $t_{p,q}=\frac 1{2\pi}\frac pq$ we get using the linear periodic Talbot effect:

$$e^{it_{p,q}\Delta}u_0(x)=\frac 1{q}\int_0^{2\pi}\hat{u_0}(\xi)e^{-it_{p,q}\xi^2+ix\xi}\sum_{l\in\mathbb Z}\sum_{m=0}^{q-1}G(-p,m,q)\delta(x-2t_{p,q}\xi-l-\frac mq)\,d\xi.$$
Since $\hat u_0$ is located modulo $2\pi$ in $B(0,\eta\frac {\pi}{ p})$ then 
$$e^{it_{p,q}\Delta}u_0(x)=\frac {1}{\sqrt{q}} \,\hat{u_0}(\xi_x)\, e^{-it_{p,q}\,\xi_x^2+ix\,\xi_x+i\theta_{x,p,q}},$$
for some $\theta_{x,p,q}\in\mathbb R$ and $\xi_{x} :=\frac {\pi q}p\,d(x,\frac1q\mathbb Z) \in[0,\frac \pi p).$

For proving the concentration effect of Theorem \ref{thTalbotlin} we shall construct a family of sequences $\{\alpha_k^\lambda\}$ such that $\sum _{k\in\mathbb Z} \alpha_k^\lambda \delta_k$ concentrates in the Fourier variable near the integers. To this purpose we consider $\psi$ a real bounded function with support in $[-\frac 12,\frac 12]$ and $\psi(0)=1$. We define
$$f^\lambda(\xi)= \lambda\psi(\lambda\xi) ,\forall \xi\in[-\pi,\pi].$$
Thus we can decompose
$$f^\lambda(\xi)=\sum _{k\in\mathbb Z} \alpha_k^\lambda e^{ik\xi},$$
and consider 
$$u_0^\lambda=\sum _{k\in\mathbb Z} \alpha_k^\lambda \delta_k.$$
In particular, on $[-\pi,\pi]$, we have $\widehat{u_0^\lambda}=f^\lambda$. Given $t_{p,q}=\frac 1{2\pi}\frac pq$, for $\lambda>p$, the restriction of $\widehat{u_0^\lambda}$ to $[-\pi,\pi]$ has support included in $B(0,\eta\frac \pi p)$ for a $\eta\in]0,1[$. We then get by the first part of the statement
$$e^{it_{p,q}\Delta}u_0^\lambda(0)=\frac {1}{\sqrt{q}} \,\widehat{u_0^\lambda}(0)\, e^{-it_{p,q}\,\xi_x^2+ix\,\xi_x+i\theta_{x,p,q}},$$
so 
$$|e^{it_{p,q}\Delta}u_0^\lambda(0)|=\frac {1}{\sqrt{q}} \,|f^\lambda(0)|=\frac {1}{\sqrt{q}} \lambda\psi(0)=\frac {1}{\sqrt{q}} \lambda.$$
On the other hand, at $t_{p,q}=\frac 1{2\pi}\frac pq$ we have
$$|e^{it_{p,q}\Delta}\alpha_0^\lambda\delta_0(0)|=\sqrt{\frac {4q}p} \,|\alpha_0^\lambda|=\sqrt{\frac {4q}p} \,\frac1{2\pi}\left|\int_{-\pi}^\pi f^\lambda(\xi)d\xi\right|=C(\psi)\sqrt{\frac qp} \lambda^{-1}.$$
Therefore,
$$\left|\frac{e^{it_{p,q}\Delta}u_0^\lambda(0)}{e^{it_{p,q}\Delta}\alpha_0^\lambda\delta_0(0)}\right|=\frac{
\sqrt{p}}{C(\psi)q}\lambda\overset{\lambda\rightarrow\infty}{\longrightarrow}\infty,$$  
and the proof of Theorem \ref{thTalbotlin} is complete.

\section{Intermittency and multifractality}
As recalled, in numerical simulations of the binormal flow evolution of a $M-$regular polygon, the trajectories in time of corners $\chi_M(t,0)$ were showed to behave as Riemann's complex function 
\begin{equation}\label{Riemannf}
R(t)=\sum_{k\in\mathbb Z}\frac{e^{itk^2}-1}{k^2},
\end{equation}
when $M$ tends to infinity (De la Hoz and Vega \cite{DelaHozVega2014}, De la Hoz, Kumar and Vega  \cite{delaHozKumarVega2020}, see Figure 4). In \cite{BanicaVega2022} we showed this kind of behavior by considering sequences of polygonal lines. More precisely, let $n\in\mathbb N^*$, $\nu\in]0,1]$, $\theta>0$. We consider polygonal lines $\chi_n(0)$ with finite but possibly many corners located at $j\in\mathbb Z$ with $|j|\leq n^\nu$    
and curvature angles $\theta_n$ such that 
$$
\theta_n=\pi-\frac{\theta}n+o\Big(\frac 1n\Big),
$$
$\chi_n(0,0)=0_{\mathbb R^3}$, and $\chi_n(0)$ lying in the $xy$-plane and symmetric with respect to the $yz-$plane, see Figure 6 for examples.
\begin{center}
\includegraphics[width=5in]{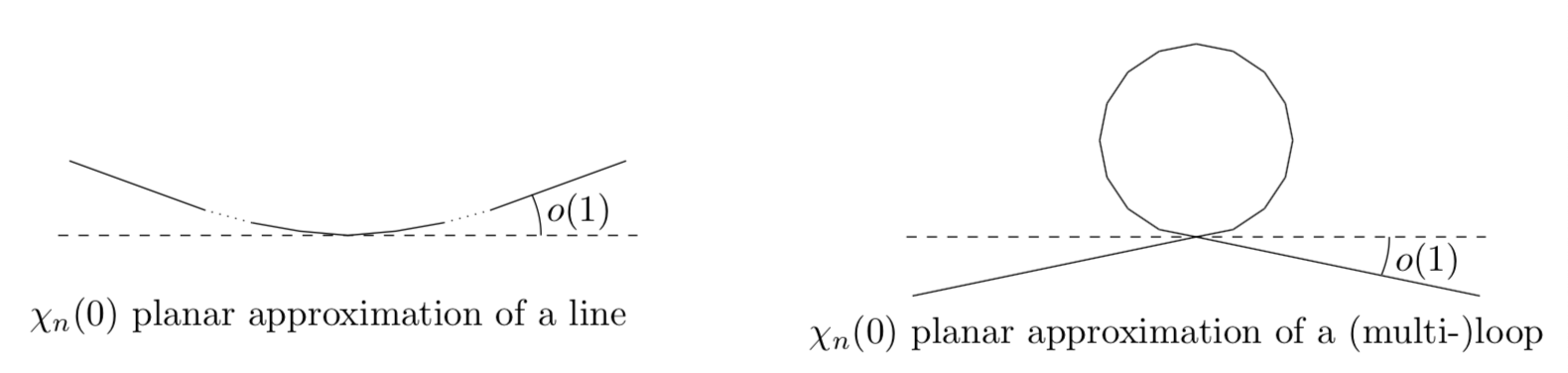}\\
{\small{Figure 6. Examples of curves $\chi_n$. }}
\end{center}
These polygonal lines with finite number of corners enter the framework of Theorem \ref{thevpoly}, but apriori their time of existence depends on $n$. We first refined in \cite{BanicaVega2022} the general analysis of Theorem \ref{thevpoly} to the specific case of $\chi_n(0)$. We show that there exists $T>0$ and $n_0\in\mathbb N^*$, both depending only on $\theta$, such that for all $n\geq n_0$ there exist smooth solutions $\chi_n(t)$ of the binormal flow on $(-T,T)\setminus\{0\}$, that are weak solutions on $(-T,T)$, and at time $t=0$ converge pointwise  to $\chi_n(0)$. Then we proved the following result. 
\begin{theorem}\label{thmultifrac}(A multifractal trajectory, \cite{BanicaVega2022})
We have the following description of the trajectory of the corner $\chi_n(t,0)$, uniformly on $(0,T)$: 
\begin{equation}\label{cv}
n(\chi_{n}(t,0)-\chi_{n}(0,0))-\theta(0,\Re({R}(t)),\Im({R}(t)))\overset{n\rightarrow\infty}\longrightarrow 0.
\end{equation}
\end{theorem}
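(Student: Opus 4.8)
The plan is to start from the integral representation of the trajectory and peel off a leading linear term whose oscillatory structure, after a Jacobi/Gauss-sum resummation, reproduces Riemann's function \eqref{Riemannf}. Since $\chi_n(0,0)=0$, I would write
$$\chi_n(t,0)-\chi_n(0,0)=\int_0^t\partial_s\chi_n(s,0)\,ds=\int_0^t\Im\big(\overline{u_{\{\alpha_k\}}}(s,0)\,N(s,0)\big)\,ds,$$
using the binormal flow together with the frame relations \eqref{ev} (so that $\partial_s\chi=\chi_x\wedge\chi_{xx}=T\wedge T_x=\Im(\bar uN)$), where $u_{\{\alpha_k\}}$ is the NLS solution of Theorem \ref{thNLScrit} and $N=e_1+ie_2$ the complexified parallel normal. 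By \eqref{angle} together with $\theta_n=\pi-\frac{\theta}{n}+o(\frac1n)$, the generating coefficients satisfy $a_k^{(n)}=\frac{\theta}{2\sqrt\pi\,n}+o(\frac1n)$ uniformly for $|k|\leq n^\nu$ and vanish otherwise; in particular all $\alpha_k^{(n)}$ are comparable and of size $1/n$, and $M=\sum_j|\alpha_j^{(n)}|^2\to0$. It is essential here that the refined version of Theorem \ref{thevpoly} provides an existence time $T$ and bounds on the remainders $R_k$ and on the frame that are uniform in $n$; I would take these as given.

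Next I would reduce to a leading term. Writing $A_k(s)=e^{-i(|\alpha_k|^2-2M)\log s}(\alpha_k+R_k(s))$, I would split $u_{\{\alpha_k\}}(s,0)=\sum_k\alpha_k^{(n)}\frac{e^{ik^2/(4s)}}{\sqrt s}+\mathcal E_1$, where $\mathcal E_1$ absorbs the log-phase corrections $e^{-i(|\alpha_k|^2-2M)\log s}-1$ and the remainders $R_k(s)$, and similarly $N(s,0)=N_0+\mathcal E_2$, with $N_0$ the fixed frame of the limiting straight line (the angles tend to $\pi$). The aim of this step is the two bounds $n\int_0^t\Im(\overline{\mathcal E_1}N)\,ds\to0$ and $n\int_0^t\Im(\bar u\,\mathcal E_2)\,ds\to0$, using the smallness of $M$, the estimate $\|\{R_k(s)\}\|_{l^{2,s}}\lesssim s^\gamma$, and the near-constancy of the frame forced by $\theta_n\to\pi$. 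The surviving main term is
$$n\int_0^t\Im\Big(\overline{u^{\mathrm{lin}}}(s,0)\,N_0\Big)\,ds,\qquad u^{\mathrm{lin}}(s,0)=\frac{\theta}{2\sqrt\pi\,n}\sum_{|k|\leq n^\nu}\frac{e^{ik^2/(4s)}}{\sqrt s}.$$
I would stress that one cannot estimate $\mathcal E_1,\mathcal E_2$ \emph{before} exploiting cancellation: the naive $\ell^1$ bound $|u^{\mathrm{lin}}|\lesssim n^{\nu-1}/\sqrt s$ blows up after multiplication by $n$, so every estimate must be made once the oscillatory sum has been resummed.

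The heart of the argument is that resummation. For fixed $s>0$ the finite theta sum obeys the Jacobi/Gauss-sum identity (the same mechanism as in the Talbot computations of \S\ref{section-Talbot}), which up to the universal constants the paper suppresses turns $\frac1{\sqrt s}\sum_{k}e^{ik^2/(4s)}$ into $\sum_{k}e^{-ic\,sk^2}$, thereby removing the $1/\sqrt s$ singularity. Since $n^\nu\to\infty$ the truncated sum fills out $\mathbb Z$, so after interchanging the now legitimate sum and integral I would obtain
$$\int_0^t\sum_{k\in\mathbb Z}e^{-ic\,sk^2}\,ds=t+\sum_{k\neq0}\frac{1-e^{-ic\,tk^2}}{ic\,k^2},$$
whose oscillatory part is exactly Riemann's function $R$ of \eqref{Riemannf} (again up to the time rescaling and constants that are neglected). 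Finally I would use the planarity and $yz$-symmetry of $\chi_n(0)$ to fix the orientation of $N_0$, which converts the scalar $R(t)$ into the vector $\theta(0,\Re R(t),\Im R(t))$ and yields \eqref{cv}.

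I expect the main obstacle to be the uniform control of the oscillatory sums carried out \emph{after} the resummation: one must bound the truncation error from $|k|\leq n^\nu$, the tails of the associated Fresnel integrals $\int_{k^2/(4s)}^\infty r^{-3/2}e^{ir}\,dr$ down to $s=0$, and the frame deviation $N-N_0$, all at precision $o(1/n)$ and uniformly in $s\in(0,t)$ and in $n$. A conceptual point that makes this delicate is that $R$ is nowhere differentiable, so one cannot argue at the level of $\partial_t\chi_n(t,0)$; the convergence must be established directly for the trajectory, at the integrated level, which is precisely why the resummation that tames the $1/\sqrt s$ singularity is indispensable.
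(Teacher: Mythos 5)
Your proposal follows essentially the same route as the paper: the integral representation $\chi_n(t,0)-\chi_n(0,0)=\int_0^t\Im(\bar u N)\,ds$ from \eqref{ev}, discarding the $R_k$-remainders and log-phase corrections at precision $o(1/n)$, extracting the (asymptotically constant) normal frame, and resumming the truncated theta sum $\frac1{\sqrt s}\sum_k e^{\pm ik^2/(4s)}$ via Poisson summation so that the time integral produces Riemann's function. The only cosmetic difference is that the paper factors out a \emph{modulated} normal vector $\tilde N_n(0,0)$ (whose limit $(0,\frac{1-i}{\sqrt2},\frac{-1-i}{\sqrt2})$ it computes explicitly) rather than a raw limiting frame $N_0$, which is the precise device needed to make your ``frame deviation'' estimate work; otherwise the two arguments coincide.
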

This  theorem gives a non-obvious non-linear geometric interpretation of Riemann's function. It is valid also for $\chi_n(0)$ with same torsion angle $\omega_0$ at all corners, and then the limit is given by the following Riemann-type function:
$$\frak R_{\omega_0}(t)=\sum_{k\in\mathbb Z}\frac{e^{it(k-\omega_0)^2}-1}{(k-\omega_0)^2}.$$
We shall give the sketch of the proof of Theorem \ref{thmultifrac} in \S\ref{section-multifrac}. For rational-type torsion $\omega_0$ we also computed the spectrum of singularities of $\frak R_{\omega_0}$, showed that it is the same as the one of Riemann's function, and proved that $\frak R_{\omega_0}$ satisfies the Frisch-Parisi formalism\footnote{The Frisch-Parisi multifractal formalism was originally proposed for the velocity in an Eulerian setting, 
but it can be equally proposed in the Lagrangian setting, 
to which Riemann's function is in our context more related to since it represents a time trajectory.
 See the work of Chevillard et al. \cite{ChevillardCastaignArneodoLevequePintonRoux2012} for a discussion on the differences between these two frameworks.} and intermittency, notions that we recall in the following (see also the book of Frisch \cite{Frisch1995}). 

We recall that Riemann's function $\sum_{k\in\mathbb Z^*}\frac{\sin(tk^2)}{k^2}$, and implicitly its complex versions $\sum_{k\in\mathbb Z^*}\frac{e^{itk^2}}{k^2}$ 
and $R$ defined in \eqref{Riemannf}, was studied by Jaffard in 1996 (\cite{Jaffard1996}) from the point of view of multifractal analysis, first by computing its spectrum of singularities. The spectrum of singularities of a function $f:[0,1]\rightarrow\mathbb R$ is given by the Hausdorff dimension of non-empty iso-H\"older sets,  
$$d_{f}(\alpha)=\dim_\mathcal H\{t\in[0,1],\sup\{\beta, f\in\mathcal C^\beta(t)\}=\alpha\},$$
with the convention $d_f(\alpha)=-\infty$ if the local $\alpha$-H\"older regularity is not reached. Jaffard (see also Brouke and Vindas (\cite{BrouckeVindas2023}) for a recent different proof of computing pointwise the H\"older regularity) showed that
$$d_R(\alpha)=\left\{\begin{array}{c}4\alpha-2,\alpha\in[\frac 12,\frac 34],\\0,\alpha=\frac 32,\\-\infty,\mbox{ otherwise}. \end{array}\right.$$
 Moreover, he proved that it satisfes the multifractal formalism of Frisch-Parisi, a model for predicting the structure function exponents in turbulent flows, that was motivated by the experiments in Modane of Anselmet and all in 1984 (\cite{AnselmetGagneHopfingerAntonia1984}) showing deviations from Kolmogorov 41 theory:
$$
d_{{R}}(\alpha)=\inf_p (\alpha p-\eta_{{R}}(p)+1),\quad\forall\alpha\in\Big[\frac 12,\frac 34\Big],$$
where $\eta_R$ is determined in terms of Besov spaces\footnote{We recall first the definition of high-pass filters. Let $\Phi \in C^\infty (\mathbb R)$ be a cutoff function vanishing in a neighborhood of the origin 
and such that $\Phi(x) = 1$ for $|x| \geq 2$.  
For a periodic function $f(t) = \sum_{k \in \mathbb Z} a_k e^{2\pi i k t}$, and for $N \in \mathbb N$, we define the  high-pass filter of Fourier modes larger than $N$ as:
$$
P_{\geq N} f(t) = \sum_{k \in \mathbb N} \Phi\Big(  \frac{k}{N} \Big) \,  a_k \, e^{2\pi i k t }.
$$ Similarly we consider the band-pass filter $P_{\simeq N} f$ to be defined with the cutoff $\Phi$ satisfying the additional assumption of compact support. 
Finally we recall that $f\in B^{\frac sp}_{p,\infty}$ if and only if $\{2^{k\frac sp}\|P_{\simeq 2^k}f\|_p\}_{k\in\mathbb Z}\in\ell^\infty$.}:
$$\eta_{{R}}(p):=\sup\{s,\,{R}\in B^{\frac sp}_{p,\infty}\}.$$
We note that recently Barral and Seuret  (\cite{BarralSeuret2023}-\cite{BarralSeuret2023bis}) proved the validity of the multifractal formalism generically in Besov spaces. Again having in mind turbulent dynamics, Boritchev, Eceizabarrena and Da Rocha (\cite{BoritchevDaRochaEceizabarrena2021}) proved that $R$ is intermittent in small scales by showing that the flatness  satisfies:
\begin{equation}\label{flatness}
F_R(N):=\frac{\|P_{\geq N}R\|^4_4}{\|P_{\geq N}R\|^4_2}\overset{N\rightarrow\infty}{\rightarrow}\infty,
\end{equation} 
where $P_{\geq N}$ is the high-pass filter of Fourier modes larger than $N$ defined in the previous footnote.\medskip

In collaboration with Eceizabarrena and Nahmod we extended in \cite{BanicaEceizabarrenaNahmodVega2024} Theorem \ref{thmultifrac} to all locations $x_0\in\mathbb R$, showing that the convergence \eqref{cv} holds with
$$n(\chi_{n}(t,x_0)-\chi_{n}(0,x_0))-\theta(0,\Re({R_{x_0}}(t)),\Im({R_{x_0}}(t)))\overset{n\rightarrow\infty}\longrightarrow 0,
$$
where
$$ R_{x_0}(t)=\sum_{k\in\mathbb Z}\frac{e^{itk^2}-1}{k^2}e^{ikx_0}.$$ 
In Figure 7 several such trajectories are represented, among which, for $x_0=0$, the one of the classical Riemann's complex function \eqref{Riemannf}.

\begin{center}
$$\includegraphics[height=1.5in]{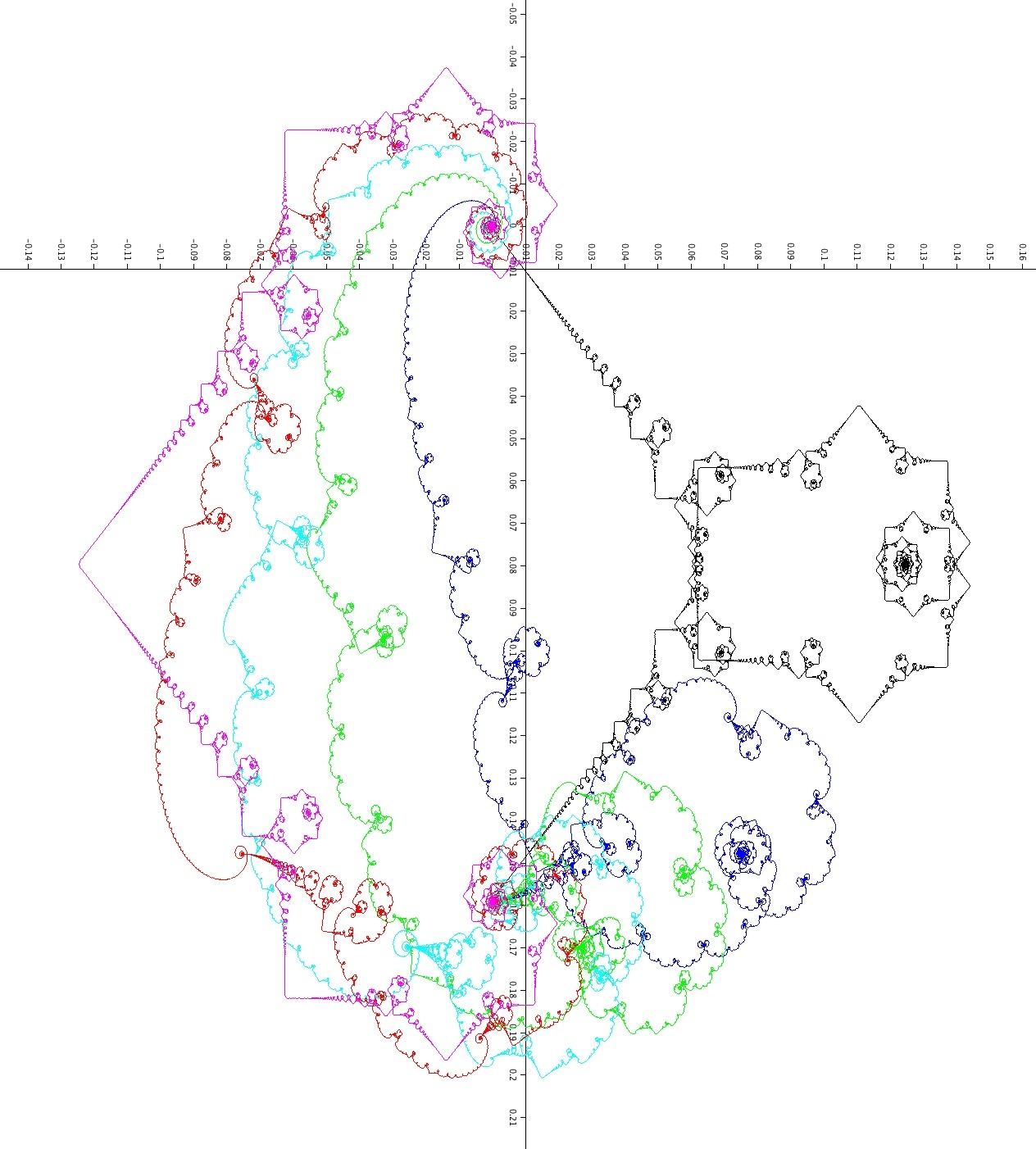}$$
{\small{Figure 7. Trajectories of the limit curve evolution $R_{x_0}$ for $t\in[0,2\pi]$, at $x_0=0$ (black), $0.1$ (blue), $0.2$ (green), $0.3$ (cyan), $0.4$ (red), $0.5$ (magenta).}}
\end{center}

Many multifractal analysis studies have been done for various types of modifications of Riemann's function. For instance Chamizo and Ubis (\cite{ChamizoUbis2014}) and Seuret and Ubis (\cite{SeuretUbis2017}) studied $\Sigma_{k}\frac{e^{iP(k)t}}{k^\alpha}$ for $P$ polynomial. Also, Kapitanski and Rodnianski studied in \cite{KapitanskiRodnianski1999} the behavior in space of $u_{t_0}(x)=\Sigma_{k}e^{ik^2t_0+ikx}$ for $t_0$ fixed.

In \cite{BanicaEceizabarrenaNahmodVega2024} we have studied the function $R_{x_0}$, that is a natural extension of Riemann's function, linked with the fundamental solution of the periodic Schr\"odinger equation, and that came out in the study of trajectories of binormal flow evolutions, having in mind that the binormal flow is a model for non-smooth fluids presenting vortex filaments. 

\begin{theorem}\label{thBENV}{(General multifractal study, \cite{BanicaEceizabarrenaNahmodVega2024})}
For $x_0\in\mathbb R$ the function $R_{x_0}$
is multifractal, with infinitely many local H\"older exponents. \\\vspace{2mm}
If $x_0\in\mathbb Q$ then $R_{x_0}$ has the same spectrum of singularities as Riemann's function:
$$d_{R_{x_0}}(\alpha)=\left\{\begin{array}{l}4\alpha-2, \alpha\in [\frac 12,\frac 34],\\ 0,\quad \alpha=\frac 32,\\-\infty,\mbox{ otherwise},\end{array}\right.$$
it satisfies the multifractal formalism of Frisch-Parisi, and it is intermittent in small scales.
\end{theorem}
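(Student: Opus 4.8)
The plan is to reduce the multifractal study of $R_{x_0}$ to a careful analysis of its pointwise Hölder regularity, following the strategy that Jaffard used for the classical Riemann function, but now tracking the extra phase $e^{ijx_0}$ and the shift it induces in the quadratic phase. The starting observation is that $R_{x_0}$ is, up to the linear-in-$t$ correction coming from the $-1$ in the numerator, a lacunary-type trigonometric series whose frequencies are the squares $j^2$ modulated by $e^{ijx_0}$. When $x_0\in\mathbb{Q}$, say $x_0=2\pi a/b$, the phase $e^{ijx_0}$ is periodic in $j$ with period $b$, so that one can split the sum into residue classes $j\equiv r\pmod b$ and complete the square; each class contributes a series governed by a quadratic phase in a shifted variable, and the arithmetic of Gauss sums $G(-p,m,q)$ recorded in Section~\ref{section-Talbot} controls the size of $R_{x_0}$ near rationals $t=\frac{1}{2\pi}\frac pq$. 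This is exactly the mechanism that produces the self-similar behaviour near every rational and hence the full range of Hölder exponents.

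First I would establish the pointwise regularity at the two distinguished families of points. At irrational $t$ satisfying a Diophantine condition, a stationary-phase / exponential-sum estimate (van der Corput together with the periodicity of $e^{ijx_0}$ for rational $x_0$) gives the generic exponent $\alpha=\tfrac12$, while the lacunary structure prevents any better regularity for Lebesgue-a.e.\ $t$. At rationals $t=\frac{1}{2\pi}\frac pq$ I would insert the Gauss-sum evaluation of $e^{it\Delta}\sum_k\delta_k$ to extract a local self-similar expansion of $R_{x_0}$ of the form $R_{x_0}(t+h)-R_{x_0}(t)\sim c\,q^{-1}\,\Phi(h\,q^2)+\cdots$, from which the local exponent at $t$ is read off as a function of $q$ (and of the residue class of $p$ modulo $q$, where the Gauss-sum vanishing dictates the exceptional value $\alpha=\tfrac32$). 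The diophantine approximation machinery then upgrades these pointwise exponents to the statement that the iso-Hölder set $\{t:\,\alpha(t)=\alpha\}$ coincides, up to the linear correction, with a set governed by how well $t$ is approximated by rationals with controlled denominators.

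Next I would compute the Hausdorff dimensions of these level sets. The key input is that, because $x_0$ is rational, the denominators $q$ appearing in the self-similar expansion are the \emph{same} (up to bounded multiplicative factors absorbed into the residue-class decomposition) as those for the classical Riemann function; hence the sets $\{t:\alpha(t)=\alpha\}$ are, modulo sets of dimension zero, the classical sets of $t$ approximable at rate determined by $\alpha$. Applying the Jarník--Besicovitch theorem gives $\dim_{\mathcal H}=4\alpha-2$ for $\alpha\in[\tfrac12,\tfrac34]$, the isolated value $0$ at $\alpha=\tfrac32$ coming from the Gauss-sum cancellations, and $-\infty$ otherwise, which is precisely the asserted spectrum. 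The Frisch--Parisi formalism then follows by matching this spectrum against the Besov regularity exponent $\eta_{R_{x_0}}(p)=\sup\{s:\,R_{x_0}\in B^{s/p}_{p,\infty}\}$; the upper bound $d(\alpha)\le\inf_p(\alpha p-\eta(p)+1)$ is the standard transference from Besov embeddings, and the matching lower bound is supplied by the explicit dimension computation. Intermittency is obtained separately by estimating the flatness $F_{R_{x_0}}(N)=\|P_{\ge N}R_{x_0}\|_4^4/\|P_{\ge N}R_{x_0}\|_2^4$: the $L^2$ norm of the high-pass part is computed exactly by Parseval from the coefficients $j^{-2}e^{ijx_0}$, while the $L^4$ norm is bounded below using the concentration of $R_{x_0}$ near rationals $t\approx\frac{1}{2\pi}\frac pq$ that the Gauss-sum analysis already revealed, yielding $F_{R_{x_0}}(N)\to\infty$.

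The first assertion, that $R_{x_0}$ is multifractal with infinitely many exponents for \emph{every} real $x_0$ including irrationals, should be handled before the rational-$x_0$ refinement and is where I expect the main obstacle. For irrational $x_0$ the phase $e^{ijx_0}$ is no longer periodic in $j$, so the clean residue-class decomposition and the exact Gauss-sum evaluation are unavailable; instead I would argue that the set of local Hölder exponents is still infinite by a more robust route, exhibiting at least a countable family of distinct exponents through the behaviour of $R_{x_0}$ near the rationals $t=\frac{1}{2\pi}\frac pq$, where the summation $\sum_j j^{-2}e^{itj^2}e^{ijx_0}$ can be analysed by partial summation against the partial Gauss-type sums $\sum_{j\le J}e^{itj^2+ijx_0}$ — whose size is controlled by the classical Weyl/Hardy--Littlewood bounds uniformly in the additional linear phase $x_0$. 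Making these uniform-in-$x_0$ exponential-sum estimates precise enough to separate infinitely many distinct Hölder exponents, without the arithmetic simplification that rationality of $x_0$ provides, is the delicate point; everything else follows the Jaffard template adapted to the shifted quadratic phase.
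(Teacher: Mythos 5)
Your reduction of the problem to the local behaviour of $R_{x_0}$ near rationals via the residue-class decomposition and Gauss sums matches the paper's starting point, as does the use of the irrationality exponent to get the lower bound $\alpha_{R_{x_0}}(t)\geq \frac12+\frac1{2\mu(t)}$. But there is a genuine gap at the heart of the dimension computation. You assert that, for rational $x_0$, the iso-H\"older sets coincide ``modulo sets of dimension zero'' with the classical sets of $\mu$-approximable numbers, and you then invoke Jarn\'{\i}k--Besicovitch directly. This is not justified, and it is precisely where the difficulty lies. Writing $x_0=\frac PQ$, the self-similar expansion at $t=\frac pq$ only produces the $\sqrt{h}/\sqrt{q}$ lower bound (which drives the \emph{upper} bound on the H\"older exponent) when the Gauss sum $G(p,m_{x_0,q},q)$ is nonzero and $d(x_0,\frac{\mathbb Z}{q})=0$, which forces the congruence restriction $q\in 4Q\mathbb N$ on the approximating denominators. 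The iso-H\"older sets are therefore approximated by \emph{new} Diophantine sets $\{t:\ |t-\frac pq|\leq q^{-\mu}\ \text{for infinitely many coprime }(p,q)\ \text{with}\ q\in 4Q\mathbb N\}$, which are proper subsets of the classical $\mathbf{A}_\mu$; the difference set has no reason to be of dimension zero, and Jarn\'{\i}k--Besicovitch only gives the upper bound $\dim_{\mathcal H}\leq \frac2\mu$ via the inclusion into $\mathbf{A}_\mu$. The matching lower bound $\mathcal H^{2/\mu}(\mathbf{A}_{\mu,Q})=+\infty$ is the real content, and the paper obtains it in two steps you do not supply: the Duffin--Schaeffer theorem (in its full form due to Koukoulopoulos--Maynard) applied to $\psi(q)=\mathbb I_{4Q\mathbb N}(q)/q^2$ gives that the restricted approximation at the critical rate $\mu=2$ still has full Lebesgue measure, and the Beresnevich--Velani Mass Transference Principle then upgrades this Lebesgue statement to infinite $\mathcal H^{2/\mu}$ measure of $\mathbf{A}_{\mu,Q}$ for $\mu>2$. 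Without these inputs your dimension count for the level sets, and hence the spectrum $d_{R_{x_0}}(\alpha)=4\alpha-2$, is not established.

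Two smaller points. First, your claim that the ``generic exponent'' at Diophantine irrationals is $\alpha=\frac12$ is off: the a.e.\ exponent is $\frac34$ (corresponding to $\mu(t)=2$ in $\alpha=\frac12+\frac1{2\mu}$), while $\frac12$ is attained at the rationals with nonvanishing Gauss sum and is the bottom of the spectrum with $d(\frac12)=0$. Second, your outline for Frisch--Parisi and intermittency (computing $\eta_{R_{x_0}}(p)$ from $L^p$ norms of dyadic blocks and lower-bounding the flatness via concentration near rationals) is consistent with the paper's strategy, which rests on $L^\infty$ estimates of the partial Vinogradov sums near rational times together with counting arguments; this part of your plan is plausible but would need those exponential-sum estimates made quantitative.
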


The spectrum proof starts as in \cite{Jaffard1996} and then follows the approach in \cite{ChamizoUbis2014} but ends up with new Diophantine sets that approximate the iso-H\"older sets. We measure these new sets using Duffin-Schaeffer theorem from 1941 finalized by Koukoulopoulos and Maynard in 2020 (\cite{DuffinSchaeffer1941}-\cite{KoukoulopoulosMaynard2020}) and the Mass Transference Principle proved by Beresnevich and Velani in 2006 (\cite{BeresnevichVelani2006}). We shall give a sketch of the computation of the spectrum in \S \ref{section-spectrum}. 

To show that $R_{x_0}$ satisfies Frish-Parisi multifractal formalism we compute 
$\eta_{{R_{x_0}}}(p)$
by obtaining the $L^p$ norms of dyadic blocs of $R_{x_0}'$, based on $L^\infty$ estimates of partial sums of the fundamental solution of periodic 1D Schr\"odinger equation, i.e. Vinogradov trigonometrical series, near rational time and space locations, together with counting techniques.

\subsubsection{Sketch of the proof of Theorem \ref{thmultifrac}}\label{section-multifrac}
The solutions of the binormal flow $\chi_n$ were constructed by using Hasimoto's method from the following solutions \eqref{superpsol} in Theorem \ref{thNLScrit}:
$$u_{\{\alpha_{n,k}\}}(t,x)=\sum_{k\in\mathbb Z}e^{- i(|\alpha_{n,k}|^2-2\sum_{j\in\mathbb Z}|\alpha_{n,j}|^2)\log t}(\alpha_{n,k}+R_{n,k}(t))e^{it\Delta}\delta_k(x),$$
 with 
$$\sup_{0<t<T}t^{-\gamma}\|\{R_{n,k}(t)\}\|_{l^{2,s}}+t\,\|\{\partial_t R_{n,k}(t)\}\|_{l^{2,s}}<C,$$
and $s>\frac 12, 0<\gamma<1$. In view of the particular shape of $\chi_n(0)$ we have $\alpha_{n,j}=c_n\approx \frac \theta n$. Using the equations \eqref{ev} we can compute the evolution of one arclength location:
$$\chi_n(t,0)-\chi_n(0,0)=\int_0^{t}\Im(\overline{u_{\{\alpha_{n,k}\}}}N_n(\tau,0))\,d\tau$$
$$=\Im\int_{0}^{t}\sum_k(\overline{\alpha_{n,k}+R_{n,k}(\tau)})\frac{e^{-i\frac{k^2}{4\tau}}}{\sqrt{\tau}}e^{- i(|\alpha_{n,k}|^2-2\sum_{j\in\mathbb Z}|\alpha_{n,j}|^2)\log t}\,N_n(\tau,0)\,d\tau.$$
The $R_{n,k}(\tau)$ constribution is $o(\frac 1n)$ due to the decay in time of its $l^1$ norm. In \cite{BanicaVega2020} we proved that a modulation of the normal vector $N_n(\tau,0)$ has a limit 
$\tilde N_n(0,0)\in\mathbb S^2+i\mathbb S^2$ at $\tau=0$, 
that allows us to estimate:
$$\chi_n(t,0)=\Im (e^{-i\sum_{1\leq |j|\leq n^\nu}|\alpha_{n,j}|^2\log|k|} \tilde N_n(0,0)\int_{0}^{t}\sum_{|k|\leq n^\nu}\overline{\alpha_{n,k}}\frac{e^{-i\frac{k^2}{4\tau}}}{\sqrt{\tau}}\,d\tau)+o(\frac 1n).$$
Recalling that $\alpha_{n,k}=c_n\approx \frac \theta n$ and $\nu\in]0,1]$ we obtain:
$$\chi_n(t,0)=\frac \theta n \Im (\tilde N_n(0,0)\int_{0}^{t}\sum_{|k|\leq n^\nu}\frac{e^{-i\frac{k^2}{4\tau}}}{\sqrt{\tau}}\,d\tau)+o(\frac 1n).$$
On one hand, we prove convergence of the modulated normal vectors at $(t,x)=(0,0)$:
$$\lim_{n\rightarrow\infty}\tilde N_n(0,0)=(0,\frac{1-i}{\sqrt{2}},\frac {-1-i}{\sqrt{2}}).$$
On the other hand, the summation in $k$ can be taken over the whole set $\mathbb Z$ and thanks to Poisson's summation formula $\sum_{k\in\mathbb Z}f(k)=\sum_{j\in\mathbb Z}\hat f(2\pi j)$ we have: 
$$\sum_{k\in\mathbb Z}e^{i 4\pi^2 tk^2}=\sum_{k\in\mathbb Z}\int e^{-i2\pi xk+i4\pi^2 tx^2}dx=\frac 1{\sqrt{4\pi^2 t}}\sum_{j\in\mathbb Z}\int e^{-iy\frac j{\sqrt{t}}+iy^2}dy$$
$$=\frac 1{2\pi \sqrt{ t}}\sum_{j\in\mathbb Z}\widehat{e^{i\cdot^2}}(\frac j{\sqrt{t}})=\frac {e^{i\frac \pi 4}}{2\sqrt{\pi}\sqrt{t}}\sum_{j\in\mathbb Z}e^{- i\frac{j^2}{4t}},$$
Thus, neglecting again the $2\pi$-factors for simplifying the presentation, we obtain uniformly on $(0,T)$:
$$n\,\chi_{n}(t,0)-\theta(0,\Re(R(t)),\Im(R(t)))\overset{n\rightarrow\infty}\longrightarrow 0.$$

\subsubsection{Sketch of the proof of Theorem \ref{thBENV} for $\alpha\in(\frac 12,\frac 34]$.}\label{section-spectrum}
We start by analyzing the variation of $R_{x_0}$ at rationals. 
Splitting the sum in $n$ modulo $q$ and using Poisson summation formula we get:
$$R_{x_0}\Big(\frac pq+h\Big)-R_{x_0}\Big(\frac pq\Big)=-ih+\sum_{n\in\mathbb Z} e^{in^2\frac pq }\frac{e^{in^2h}-1}{n^2}e^{inx_0}$$
$$=-ih+\frac{\sqrt{h}}{q}\sum_{m\in\mathbb Z}G(p,m,q) F\Big(\frac{x_0-\frac mq}{\sqrt{h}}\Big),$$
with $G(p,s,q)=\Sigma_{r=0}^{q-1}e^{i\frac{p}{q}r^2+i\frac{s}{q}r}$ i.e. Gauss sums, of size $\sqrt{q}$ except if $q$ even and $q/2,s$ are of diferent parity, 
and $F(x)=\mathcal F(\frac{e^{i\xi^2}-1}{\xi^2})=O(\frac 1{x^2}),$\vspace{2mm} $F(0)\neq 0$. Thus the leading term is given by $m_{x_0,q}$ s.t. $x_0-\frac {m_{x_0,q}}q=d(x_0,\frac{\mathbb Z}{q})$:
\begin{equation}\label{varrat}
R_{x_0}\Big(\frac pq+h\Big)-R_{x_0}\Big(\frac pq\Big)=\frac{\sqrt{h}}{q}G(p,m_{x_0,q},q) F\Big(\frac{d(x_0,\frac{\mathbb Z}{q})}{\sqrt{h}}\Big) -ih+O(\min\{\sqrt{q}h,q^\frac 32 h^\frac 32\}).
\end{equation}
Therefore, we obtain $R_{x_0}\in\mathcal C^\frac 12(\frac pq)$ if $G(p,m_{x_0,q},q)\neq 0$ and $d(x_0,\frac{\mathbb Z}{q})=0$. These conditions are satisfied for rationals $x_0=\frac PQ$ together with $q\in 4Q\mathbb Z$:
\begin{equation}\label{badH}
R_{\frac PQ}\in\mathcal C^\frac 12(\frac pq).
\end{equation}

Now we shall look for upper and lower bounds for H\"older regularity at irrationals. To do so we have to recall the notion of 
exponent of irrationality of $t\notin\mathbb Q$:
$$\mu(t)=\sup\{\nu, t\in {\bf{A_\nu\}}},$$ 
where
$${\bf{A_{\nu}}}=\{t\notin\mathbb Q,  |t-\frac pq|\leq \frac{1}{q^\nu} \mbox{ for infinitely many coprime pairs } (p,q)\in \mathbb N\times \mathbb N\}.$$
Also, we recall that the approximation by continuous fractions of $t$ satisfies
$$|t-\frac{p_n}{q_n}|=\frac1{q_n^{\mu_n}}\leq\frac 1{q_{n+1}q_n}\mbox{ and }\mu(t)=\limsup_{n\rightarrow\infty}\mu_n.$$
 Thus, for all $h$ small there exists $n$ such that
 $$|t-\frac{p_n}{q_n}|\leq h\leq |t-\frac{p_{n-1}}{q_{n-1}}|.$$ 
 
 For estimating the variations at $t$ we rely on variations at the rationals $\frac{p_n}{q_n}$, and use \eqref{varrat} to get:
$$|R_{x_0}(t+h)-R_{x_0}(t)|\leq |R_{x_0}(\frac{p_n}{q_n}+(t-\frac{p_n}{q_n}+h))-R_{x_0}(\frac{p_n}{q_n})|+|R_{x_0}(\frac{p_n}{q_n})-R_{x_0}(\frac{p_n}{q_n}+(t-\frac{p_n}{q_n}))|$$
$$\lesssim \frac{\sqrt{h}}{\sqrt{q_n}}+h+\min\{\sqrt{q_n}h,q_n^\frac 32h^\frac 32\}\lesssim h^{\frac 12+\frac{1}{2\mu_n}}+h^{\frac 12+\frac{1}{2\mu_{n-1}}}\lesssim h^{\frac 12+\frac{1}{2\mu}-\delta},\quad\forall\delta>0.$$
Therefore we have a lower bound for the H\"older exponent of $R_{x_0}$ at $t$:
\begin{equation}\label{lowerH}
\alpha_{R_{x_0}}(t)\geq\frac 12+\frac1{2\mu(t)}.
\end{equation}

To get an upper bound for H\"older regularity at irrational $t$ we consider sets of irrationals well approximated by rationals $\frac{p_n}{q_n}, q_n\in 4Q\mathbb Z$, where the H\"older regularity is $\mathcal C^\frac 12$ by \eqref{badH}:
$${\bf{A_{\mu,Q}}}=\{t\notin\mathbb Q,  |t-\frac pq|\leq \frac{1}{q^\mu} \mbox{ for infinitely many coprime pairs } (p,q)\in \mathbb N\times 4Q\mathbb N\}.$$
We note that in particular if $t\in {\bf{A_{\mu,Q}}}$ then $\mu\leq \mu(t)$. For $t\in{\bf{A_{\mu,Q}}}$ we consider $(p_n,q_n)\in \mathbb N\times 4Q\mathbb N$ from the definition of  ${\bf{A_{\mu,Q}}}$. We define $h_n,\nu_n$ such that
$$h_n=t-\frac{p_n}{q_n}, \quad \frac{1}{q_n^{\nu_n}}=|h_n|.$$
In particular we have $\nu_n\geq \mu$. 
We use \eqref{varrat}-\eqref{badH} to get the lower estimate:
$$|R_{x_0}(t+h_n)-R_{x_0}(t)|=|R_{x_0}(\frac{p_n}{q_n})-R_{x_0}(\frac{p_n}{q_n}+h_n)|\gtrsim \frac{\sqrt{h_n}}{\sqrt{q_n}}=h_n^{\frac 12+\frac1{2\nu_n}}\geq h_n^{\frac 12+\frac1{2\mu}}.$$
Hence we have obtained an upper-bound for the H\"older regularity at $t\in {\bf{A_{\mu,Q}}}$ that together with \eqref{lowerH} gives the constraint:
\begin{equation}\label{lowerupperH}
\frac 12+\frac1{2\mu}\geq\alpha_{R_{x_0}}(t)\geq\frac 12+\frac1{2\mu(t)},\quad \forall t\in {\bf{A_{\mu,Q}}}.
\end{equation}

Now we shall start approximating the iso-H\"older sets. We remove from ${\bf{A_{\mu,Q}}}$ the points that might have $\mu(t)>\mu$ by introducing the sets
$${\bf{B_{\mu,Q}}}={\bf{A_{\mu,Q}}}\setminus\Big(\cup_{\epsilon>0}{\bf{A_{\mu+\epsilon}}}\Big).$$
Then from ${\bf{B_{\mu,Q}}}\subset {\bf{A_{\mu}}}\setminus\Big(\cup_{\epsilon>0}{\bf{A_{\mu+\epsilon}}}\Big)$ and the definition of $\mu(t)$ we get 
\begin{equation}\label{irr}
\mu(t)=\mu,\quad \forall t\in {\bf{B_{\mu,Q}}}.
\end{equation}
Therefore, we obtain the following approximation of the iso-H\"older sets:
$${\bf{B_{\mu,Q}}}\subset \{t,\alpha_{R_{x_0}}(t)=\frac 12+\frac1{2\mu}\}\subset {\bf{A_{\mu-\epsilon}}}, \forall\epsilon>0.$$
Indeed, the first inclusion follows then by \eqref{lowerupperH}-\eqref{irr}. From the general lower bound \eqref{lowerupperH} it follows that a point $t$ such that $\alpha_{R_{x_0}}(t)=\frac 12+\frac1{2\mu}$ satisfies $\mu\leq \mu(t)$. Thus $\mu-\epsilon<\mu(t)=\sup\{\nu, t\in {\bf{A_\nu\}}}$ and the second inclusion follows from the definition of ${\bf{A_\nu}}$.

To prove the spectrum of singularities in Theorem \ref{thBENV} we need to show that for $\mu\geq 2$:
$$\dim_{\mathcal H}\{t,\alpha_{R_{x_0}}(t)=\frac 12+\frac1{2\mu}\}=\frac 2\mu.$$
Since
$${\bf{B_{\mu,Q}}}={\bf{A_{\mu,Q}}}\setminus\Big(\cup_{n}{\bf{A_{\mu+\frac 1n}}}\Big)\subset \{t,\alpha_{R_{x_0}}(t)=\frac 12+\frac1{2\mu}\}\subset {\bf{A_{\mu-\epsilon}}}.$$
and since Jarn\'{i}k-Besicovitch theorem proved in 1931-1934 (\cite{Jarnik1931}-\cite{Besicovitch1934}) states that $\dim_{\mathcal H}{\bf{A_\mu}}=\frac 2{\mu}$, it is enough to show that:
$$\dim_{\mathcal H}{\bf{B_{\mu,Q}}}\geq \frac 2\mu.$$
Moreover, as
$$\mathcal H^\frac 2\mu ({\bf{B_{\mu,Q}}})=\mathcal H^\frac 2\mu ({\bf{A_{\mu,Q}}})-\lim_{n\rightarrow\infty}\mathcal H^\frac 2\mu({\bf{A_{\mu+\frac 1n}}})=\mathcal H^\frac 2\mu ({\bf{A_{\mu,Q}}}),$$
it is enough to prove $\mathcal H^\frac 2\mu ({\bf{A_{\mu,Q}}})>0$. Thus, to finish the proof it is enough to show
$$\mathcal H^\frac 2\mu ({\bf{A_{\mu,Q}}})=+\infty.$$

To do so we start with a Lebesgue measure information. 
We use Duffin-Schaeffer theorem (\cite{DuffinSchaeffer1941}-\cite{KoukoulopoulosMaynard2020}) which states that if
$${\bf{A_\psi}}=\{ t, |t-\frac pq|\leq \psi(q)\mbox{ for infinitely many coprime pairs } (p,q)\in \mathbb N\times \mathbb N\},$$
and $\sum_{q\geq 1}\psi(q)\varphi(q)=\infty,$
where $\varphi$ is Euler's totient function\
 ($\varphi(q):=\# \{1\leq m\leq q, (m,q)=1\}$), then $|{\bf{A_\psi}}|=1$. 
 We show that $\psi(q):=\frac{\mathbb I_{4Q\mathbb N}(q)}{q^2}$ satisfies the hypothesis, so we get
\begin{equation}\label{Leb}
|{\bf{A_{2,Q}}}|=|{\bf{A_\psi}}|=1.
\end{equation}

Then we use transform this Lebesgue information into a Hausdorff information by the Mass Transference Principle in \cite{BeresnevichVelani2006}, which states the following.  
Let $B_n(x_n,r_n)$ a sequence of balls in $[0,1]$ with $r_n\rightarrow 0$ and $\alpha<d$. If 
$$|\limsup_n B_n(x_n, r_n^{\alpha})|=1,$$
then 
$$\dim_{\mathcal H}\limsup_n B_n(x_n, r_n) \geq \alpha,\quad \mathcal H^\alpha(\limsup_n B_n(x_n, r_n))=+\infty.$$
Since we have obtained in \eqref{Leb}
$$1=|{\bf{A_{2,Q}}}|=\Big|\limsup_q\cup_{p\leq q,(p,q)=1}B\Big(\frac pq, \frac{\mathbb I_{4Q\mathbb N}(q)}{q^2}\Big)\Big|,$$
we apply the above result with $\alpha=\frac 2\mu$ and radii $\frac{\mathbb I_{4Q\mathbb N}(q)}{q^2}$ to get:
$$+\infty=\mathcal H^\frac 2\mu \Big(\limsup_q\cup_{p\leq q,(p,q)=1}B\Big(\frac pq, \frac{\mathbb I_{4Q\mathbb N}(q)}{q^\mu}\Big)\Big)=\mathcal H^\frac 2\mu({\bf{A_{\mu,Q}}}).$$

\section{Well-posedness, phase blow-up and unique continuation at critical regularity}\label{section-blup}
As we recalled in \S \ref{section-several} the only known wellposedness results for 1D cubic Schr\"odinger equation are in the subscritical spaces with respect to the scaling, in $H^s$ for $s>-\frac12$ on the Sobolev scale, and $\mathcal FL^p$ for $p<+\infty$ on the Fourier-Lebesgue scale, i.e. $L^p$-regularity of the Fourier transform (\cite{Harrop-GriffithKillipVisan2024},\cite{Grunrock2005}).

In collaboration with Luc\`a and Tzvetkov  \cite{BanicaLucaTzvetkovVega2024} we displayed a subset in the supercritical spaces $H^s$ for $s<-\frac12$ and in the critical space $\mathcal FL^\infty$ in which we obtained a well-posedness result in the following sense.

\begin{theorem}\label{thblup}(A well-posed critical subset for 1D cubic NLS and phase blow-up, \cite{BanicaLucaTzvetkovVega2024})
Let $r>0$ and $u_1(x)$ satisfying the following periodicity property at the level of the Fourier transform on $\mathbb R$:
\begin{equation}\label{perdata}
\widehat{e^{-i\Delta}u_1}(\xi)=e^{i\xi^2}\widehat u_1(\xi) \in H^r(0,2\pi).
\end{equation}
Then there exists a unique solution of \eqref{CubicNLS} on $(0,1]$ such that $u(1)=u_1$ and 
$$\widehat{e^{-it\Delta}u(t)}(\xi)=e^{it\xi^2}\widehat u(t,\xi) \in H^r(0,2\pi), \forall t\in(0,1].$$

Moreover, by denoting $\{A_k(t)\}_{k\in\mathbb Z}$ the Fourier coefficients of $e^{it\xi^2}\widehat u(t,\xi) $, we have the existence of a sequence $\{\alpha_k\}_{k\in\mathbb Z}\in l^{2,r}$ such that
$$|A_k(t)-e^{i(|\alpha_k|^2-2\|e^{i\xi^2}\widehat {u_1}(\xi)\|_{L^2(0,2\pi)}^2)\log t}\alpha_k|\leq C(\|e^{i\xi^2}\widehat {u_1}(\xi)\|_{H^r(0,2\pi)})\,t, \quad \forall k\in\mathbb Z, t\in(0,1).$$
In particular, we have a blow-up in the sense that $u(t)$ falls out from the periodicity functional framework at $t=0$, as the Fourier coefficients $A_k(t)$ don't have a limit, due to the logarithmic phase loss. 

The dependence with respect to the initial data is continuous in the following sense. 
Let $\{u_{1,n}\}_{n\in\mathbb N}$ be a sequence of initial data satisfying \eqref{perdata} such that 
$
e^{i \xi^2}\widehat{u_{1,n}}(\xi) 
$
converges to 
$
e^{i \xi^2}\widehat{u}(1,\xi)
$
in $H^r(0,2\pi)$.  Then for every $t\in (0,1]$ the sequence 
$
e^{i t\xi^2}\widehat{u_n}(t,\xi) 
$
converges to 
$
e^{i t\xi^2}\widehat u(t,\xi)
$
in $H^r(0,2\pi)$, where $u_n$(t) is the solution of \eqref{CubicNLS} with data $u_{1,n}$ at $t=1$. 
We have that 
$
u\in C\big((0,1]; {\mathcal S}'(\mathbb R)\big)
$
but it blows-up at zero in the sense that 
$
\lim_{t\rightarrow 0^+} u(t,\cdot)
$
does not exist in ${\mathcal S}'(\mathbb R)$. 
\end{theorem}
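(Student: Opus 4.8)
The plan is to recognize that the ``periodicity functional framework'' \eqref{perdata} is exactly the ansatz class of Theorem \ref{thNLScrit}, and to reduce the entire statement to the backward-in-time analysis of the discrete system \eqref{systAj}. Using the conventions $\widehat{e^{it\Delta}\delta_k}(\xi)=e^{-it\xi^2-ik\xi}$, for $u(t)=\sum_k A_k(t)e^{it\Delta}\delta_k$ one has
$$e^{it\xi^2}\widehat u(t,\xi)=\sum_{k\in\mathbb Z}A_k(t)e^{-ik\xi},$$
a $2\pi$-periodic function of $\xi$ whose Fourier coefficients are precisely $\{A_k(t)\}$, and whose membership in $H^r(0,2\pi)$ is equivalent by Parseval to $\{A_k(t)\}\in l^{2,r}$. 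Thus \eqref{perdata} says exactly that $u_1=\sum_k A_k(1)e^{i\Delta}\delta_k$ with $\{A_k(1)\}\in l^{2,r}$. Moreover, expanding $|u|^2u$ in this class and using $(x-j_1)^2-(x-j_2)^2+(x-j_3)^2=(x-k)^2-2(k-j_1)(k-j_3)$ with $k=j_1-j_2+j_3$, the cubic nonlinearity re-expands in the same family $\{e^{it\Delta}\delta_k\}$; hence the class is preserved by the flow and \eqref{CubicNLS} reduces exactly to \eqref{systAj}. The theorem then becomes: solve \eqref{systAj} on $(0,1]$ with datum $\{A_k(1)\}\in l^{2,r}$, extract the asymptotic profile at $t=0$, and read off the blow-up.

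Away from the origin the right-hand side of \eqref{systAj} is smooth and locally Lipschitz on $l^{2,r}$ (the $1/t$ factor being bounded on each $[t_0,1]$), so ODE theory gives a unique backward solution on every $[t_0,1]$ with conserved mass $M=\sum_k|A_k(t)|^2=\|e^{i\xi^2}\widehat u_1\|_{L^2(0,2\pi)}^2$. The content is the uniform control as $t\to0^+$. I would gauge out the resonant diagonal term by writing $A_k(t)=e^{i(|\alpha_k|^2-2M)\log t}(\alpha_k+R_k(t))$ and run the contraction of Theorem \ref{thNLScrit} (the operator $\Phi$), now anchored by the datum at $t=1$, in the space $X^\gamma$. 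The nonresonant phase, of frequency proportional to $(k-j_1)(k-j_3)$, yields after one integration by parts in time a gain of $t/\big(|k-j_1|\,|k-j_3|\big)$ that renders the singular $d\tau/\tau$ integrable near $0$ and forces $R_k(t)\to0$. This produces $\{\alpha_k\}\in l^{2,r}$ as $\alpha_k=\lim_{t\to0}e^{-i(|\alpha_k|^2-2M)\log t}A_k(t)$, and integrating the nonresonant part once more upgrades the remainder to the sharp bound $|A_k(t)-e^{i(|\alpha_k|^2-2M)\log t}\alpha_k|\le Ct$.

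The \emph{main obstacle} is precisely that the datum lies only in $l^{2,r}$ with $r>0$, whereas $l^{2,r}$ is a convolution algebra only for $r>1/2$ (which is exactly where Theorem \ref{thNLScrit} required $s>1/2$). For $r\in(0,1/2]$ the cubic sum over $j_1-j_2+j_3=k$ cannot be closed by the algebra property, and the missing regularity must be recovered from the oscillation: after integration by parts in $\tau$ the summand carries the extra weight $1/\big(|k-j_1|\,|k-j_3|\big)$, and one must distribute the output weight $\langle k\rangle^r$ among the three frequencies and exploit this gain to sum. Establishing this trilinear estimate sharply, uniformly down to $t=0$ and across all $r>0$, is the technical heart of the argument.

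The remaining assertions follow. Blow-up is immediate: as soon as $|\alpha_k|^2\neq 2M$ for some $k$ (the generic case), the factor $e^{i(|\alpha_k|^2-2M)\log t}$ rotates indefinitely as $t\to0^+$, so $A_k(t)$ has no limit, $\sum_k A_k(t)e^{-ik\xi}$ has no $H^r(0,2\pi)$ limit, and $u$ leaves the framework. For the $\mathcal S'$ statement, $u\in C\big((0,1];\mathcal S'\big)$ since each $e^{it\Delta}\delta_k\to\delta_k$ in $\mathcal S'$ and the series converges in $l^{2,r}$; but pairing $u(t)$ with a Schwartz function whose Fourier transform isolates a neighbourhood of a single integer frequency extracts one oscillating coefficient $A_k(t)$, so the pairing oscillates and $\lim_{t\to0^+}u(t)$ cannot exist in $\mathcal S'$. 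Finally, continuous dependence follows from the Lipschitz bounds of the contraction: the map $\{A_k(1)\}\mapsto(\{\alpha_k\},\{R_k\})$ is Lipschitz on $l^{2,r}$ uniformly on $(0,1]$, and this glues with the ODE continuous dependence on each $[t_0,1]$ to give the stated $H^r(0,2\pi)$ continuity of $e^{it\xi^2}\widehat u(t,\xi)$ in the datum.
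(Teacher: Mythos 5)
Your reduction is the right one and matches the paper's starting point: condition \eqref{perdata} is equivalent to the ansatz $u(t)=\sum_k A_k(t)e^{it\Delta}\delta_k$ with $\{A_k(t)\}\in l^{2,r}$, and everything comes down to solving the system \eqref{systAj} backward from $t=1$ and extracting the logarithmically rotating profile. (The paper phrases this after a pseudo-conformal transformation, as the periodic equation \eqref{NLSt} on $[1,\infty)$ with $B_k(t)=\overline{A_k}(1/t)$, but that is only a change of variables.) The blow-up, $\mathcal S'$, and continuous-dependence paragraphs are fine in outline; note only that the qualifier ``generic'' is unneeded for the phase loss, since for any nonzero sequence one has $|\alpha_k|^2\leq\sum_j|\alpha_j|^2<2M$ whenever $\alpha_k\neq0$, so $|\alpha_k|^2-2M\neq0$ automatically.

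There is, however, a genuine gap exactly where the theorem has its content, namely $0<r\leq\frac12$. First, your claim that ``away from the origin the right-hand side of \eqref{systAj} is smooth and locally Lipschitz on $l^{2,r}$, so ODE theory gives a unique backward solution on every $[t_0,1]$'' is false in that range: the twisted convolution $\sum_{j_1-j_2+j_3=k}A_{j_1}\overline{A_{j_2}}A_{j_3}$ does not map $l^{2,r}\times l^{2,r}\times l^{2,r}$ into $l^{2,r}$ unless $r>\frac12$, so even local solvability on $[t_0,1]$ cannot be obtained by Picard iteration in $l^{2,r}$; the paper gets it from Bourgain's $X^{r,b}$ theory for the periodic equation \eqref{NLSt}, where the $\frac1t$ factor is harmless. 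Second, you correctly identify the low-regularity trilinear bound as ``the technical heart,'' but you then leave it unproved, and a single integration by parts in $\tau$ giving the weight $1/(|k-j_1||k-j_3|)$ is not by itself enough to close in $l^{2,r}_k$ uniformly as $t\to0$ (it also produces boundary terms and six-linear terms when the derivative falls on the $A_j$'s). The paper's actual mechanism consists of two specific lemmas you do not reproduce: a uniform-in-$k$, $O(1/\nu)$ bound on the Fourier transform in time of $\partial_tB_k$ obtained by foliating the nonresonant set into the level sets $\Lambda_{k,m}=\{(j_1,j_2):(k-j_1)(j_1-j_2)=m\}$ and using the divisor bound $\#\Lambda_{k,m}\lesssim_\epsilon m^\epsilon$ together with $X^{r,b}$ norms; and a uniform-in-time $l^{2,r}$ bound for $r\in(0,\frac12)$ resting on the weighted trilinear estimate for $\varphi_{k,j_1,j_2,j_3}=(\langle k\rangle^{2r}-\langle j_1\rangle^{2r}+\langle j_2\rangle^{2r}-\langle j_3\rangle^{2r})/(k^2-j_1^2+j_2^2-j_3^2)$. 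Without supplying these (or a substitute), the proposal proves the theorem essentially only for $r>\frac12$, where it reduces to Theorem \ref{thNLScrit}.
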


Let us first note that for free Schr\"odinger solutions, if $e^{i\xi^2}\widehat u(1, \xi)$ is $2\pi-$periodic then $e^{it\xi^2}\widehat u(t, \xi)$ is also $2\pi-$periodic as
$$e^{it\xi^2}\widehat u(t, \xi)=e^{i\xi^2}\widehat u(1,\xi).
$$
For the 1D cubic Schr\"odinger equation \eqref{CubicNLS} the evolution law of of $\omega(t,\xi):=e^{it\xi^2}\widehat u(t, \xi)$
is:
$$
\omega_t(\eta, t)=\frac{i}{8\pi^3}\int e^{it2(\xi_1-\xi_2)(\xi_1-\eta)} \omega(t,\xi_1)\bar\omega(t,\xi_2)\omega(t,\eta-\xi_1+\xi_2) \,d\xi_1d\xi_2,
$$
thus compatible with periodicity of $\omega$; the solutions we construct  in Theorem \ref{thblup} are in this framework.

Phase blow up phenomena were encountered for the Schr\"odinger equation since the works of Merle in the 90s (\cite{Merle1992}, see also \cite{MerleRaphaelSzeftel2013}). Here we shall prove that this loss of phase does not affect the associated solutions of the binormal flow via Hasimoto's transform, and that they can be uniquely continued after the singularity time $t=0$.

Let us now note that the periodicity condition in the theorem translates as follows:
$$\hat u(t,\xi)=\sum_{k\in\mathbb Z} A_k(t)e^{-it\xi^2}e^{ik\xi}=\sum_{k\in\mathbb Z} A_k(t)\widehat{e^{it\Delta}\delta_k},$$
so $u(t,x)$ has the ansatz \eqref{superp}:
$$u(t,x)=\sum_kA_k(t)e^{it\Delta}\delta_k=\sum_kA_k(t)\frac{e^{i\frac{(x-k)^2}{4t}}}{\sqrt{t}}.$$
In particular $u(t)$ belongs to the supercritical spaces $H^s(\mathbb R)$ for $s<-\frac 12$, but not to the subcritical ones, and belongs also to the critical space $\mathcal F L^\infty(\mathbb R)$ provided that $r>\frac 12$, but not to the subcritical ones. Also, the data in Theorem \ref{thblup} is simply
$$u(1,x)=e^{ix^2}f(x),$$
with $f $ a $4\pi-$periodic function in $H^r(0,4\pi)$ with $r>0$.

Examples of solutions as in Theorem \ref{thblup} were given in Theorem \ref{thNLScrit}. We recall that by using the pseudo-conformal transformation 
$$
v(\tau,y):=\frac{e^{i\frac{y^2}{4\tau}}}{\sqrt {\tau}}\,\overline u(\frac 1\tau,\frac y{\tau}),
$$
equation \eqref{CubicNLS} is transformed into 
\begin{equation}\label{NLSt}
iv_t+v_{xx}+ \frac 1t|v|^2v=0.
\end{equation} 
Moreover, the ansatz \eqref{superp} translates into simply being in the periodic setting of \eqref{NLSt} with data in $H^r(0,2\pi)$ at time $t=1$. Thus the local in time well-posedness in Theorem \ref{thblup} is obtained by Bourgain's approach classical for the periodic 1D cubic Schr\"odinger equation, as the factor $\frac 1t$ in \eqref{NLSt} is harmless for this argument. The delicate point is to understand the asymptotics of the solution $v(t)$ when $t$ goes to infinity. 
In particular the result of Theorem \ref{thNLScrit} is the existence of wave operators for the above periodic 1-D cubic NLS equation with time-variable coefficient  \eqref{NLSt}. In turn Theorem \ref{thblup} corresponds to the much more delicate result of asymptotic completeness for \eqref{NLSt}, which translates to a blow-up for \eqref{CubicNLS}. We shall briefly sketch the proof at the end of this section. \\

With the help of Hasimoto's construction we obtained as a consequence of Theorem \ref{thblup} and of the analysis in \cite{BanicaVega2020} the following result. 
\begin{theorem}\label{thcritbf}(Criterium for generating binormal flow singularities, \cite{BanicaLucaTzvetkovVega2024})
Let $\chi_1(x)$ be a curve with filament function $u_1(x)$ such that $e^{i\xi^2}\hat u_1(\xi)\in H^{\frac 32^+}(0,2\pi)$. If its Frenet frame is well-defined, this condition means the curvature and torsion satisfy:
$$c_1(x)=g(x),\quad \tau_1(x)=\frac x{2}+h(x),\quad g,h\in H^{\frac32^+}_{per}(0,2\pi).$$
Then, there exists $\chi(t)$, with $\chi(1)=\chi_1$, a strong binormal flow solution on $\mathbb R^*$ and weak solution on $\mathbb R$ which generates several corner-singularities at $t=0$. Uniqueness holds in the class of curves with filament functions in the functional frame of Theorem \ref{thblup}.
\end{theorem}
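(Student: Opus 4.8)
The plan is to chain the asymptotic-completeness statement of Theorem \ref{thblup} at the NLS level with the Hasimoto construction and the polygonal-trace analysis underlying Theorem \ref{thevpoly}. First I read the hypothesis through the filament function: the curve $\chi_1$ with curvature $c_1=g$ and torsion $\tau_1=\tfrac x2+h$, $g,h\in H^{3/2^+}_{per}(0,2\pi)$, has filament function $u_1(x)=g(x)\,e^{i(\frac{x^2}4+\int_0^x h(s)\,ds)}$, and (up to the paper's irrelevant constants) the stated regularity is exactly the condition $e^{i\xi^2}\widehat{u_1}(\xi)\in H^{r}(0,2\pi)$ of Theorem \ref{thblup} with $r=3/2^+$. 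Applying Theorem \ref{thblup} with data $u(1)=u_1$ yields a unique solution $u(t)$ of \eqref{CubicNLS} on $(0,1]$ in the periodicity framework, necessarily of the superposition form $u(t,x)=\sum_{k\in\mathbb Z}A_k(t)e^{it\Delta}\delta_k(x)$, together with a limiting sequence $\{\alpha_k\}\in l^{2,r}$ obeying the logarithmic-phase asymptotics for $A_k(t)$ recorded in that theorem. Forward in time the solution extends by the local well-posedness of the periodic equation \eqref{NLSt} (the factor $1/t$ being harmless), so $u$ is defined and belongs to the nice periodic class for every $t>0$; the delicate behaviour is confined to $t\to0^+$. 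Crucially, $r=3/2^+$ forces $\{\alpha_k\}\in l^{2,3/2^+}$, the precise class for which the geometric analysis of \cite{BanicaVega2020} applies.

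Next I run the Hasimoto construction of \S\ref{sectHas} on this $u$: solving the frame system \eqref{ODEx}--\eqref{ODEt} with matrices built from $u$ produces a frame $(T,e_1,e_2)$ and, after integration, a curve $\chi(t)$ solving the binormal flow \eqref{VFE} strongly for each $t>0$, with $\chi(1)=\chi_1$. The conclusions of Theorem \ref{thevpoly} then transfer: the bound $|\partial_t\chi(t)|\lesssim t^{-1/2}$ from \eqref{ev} gives a trace $\chi(0)$ attained at rate $\sqrt t$; the oscillatory-integral estimates show $T(0,\cdot)$ is constant on each interval $(k,k+1)$, so $\chi(0)$ is a polygonal line with corners at the integers; and the self-similar analysis along paths $(t_n,k+x\sqrt{t_n})$ identifies the angle at the corner $k$ through \eqref{angle} in terms of $|\alpha_k|$. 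Since $\{\alpha_k\}$ is nonzero at several indices, several corner singularities are created at $t=0$, as asserted.

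For $t<0$ I invoke the time-reversal invariance of the binormal flow exactly as in the unique-continuation step of Theorem \ref{thevpoly}, setting $\chi(t,x)=\tilde\chi(-t,-x)$, where $\tilde\chi$ is the forward evolution (again via Theorem \ref{thevpoly}) of the reflected polygonal datum $\chi(0,-x)$. This yields a solution that is strong on $\mathbb R^*$ and weak across $t=0$ on all of $\mathbb R$. Uniqueness within the class of curves whose filament functions lie in the functional frame of Theorem \ref{thblup} descends directly from the uniqueness part of that theorem at the NLS level, transported through the Hasimoto correspondence, which is one-to-one up to a rigid motion of $\mathbb R^3$.

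The main obstacle is the backward-in-time direction. Theorem \ref{thevpoly} builds solutions \emph{forward} out of prescribed polygonal data using the wave-operator construction of Theorem \ref{thNLScrit}; here one starts instead from smooth data at $t=1$ and must prove that the singular polygonal structure genuinely emerges as $t\to0^+$. This is precisely the asymptotic-completeness content of Theorem \ref{thblup} — extracting the limiting profile $\{\alpha_k\}\in l^{2,r}$ while controlling the divergent logarithmic phase — and it is substantially more delicate than the wave-operator estimate. Once that sequence and its $l^{2,3/2^+}$ regularity are in hand, the remaining geometric work is a faithful transcription of \cite{BanicaVega2020}, presenting no new difficulty.
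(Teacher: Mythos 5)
Your proposal is correct and follows essentially the same route as the paper: apply Theorem \ref{thblup} to the datum $u_1$ at $t=1$ to extract the limiting sequence $\{\alpha_k\}$ with the persistence of $l^{2,\frac32^+}$ regularity, then feed this sequence into the machinery of Theorem \ref{thevpoly} (equivalently, the Hasimoto construction and trace analysis of \cite{BanicaVega2020}) to obtain the binormal flow solution, its polygonal trace at $t=0$, the continuation to negative times by reflection, and uniqueness inherited from the NLS level. You also correctly identify that the genuinely new ingredient is the asymptotic-completeness content of Theorem \ref{thblup}, the rest being a transcription of the earlier geometric analysis.
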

We note that the self-similar solutions $\chi_a$ of the binormal flow  discussed in \S \ref{section-ssbf} enter the framework of this theorem with $g=a$ and $h=0$.

The proof of Theorem \ref{thcritbf} relies in considering first the solutions in Theorem \ref{thblup} with initial data $u_1$ at $t=1$. We obtain a control in time of the $l^{2,\frac 32^+}$-norms of the sequences $\{A_k(t)\}_{k\in\mathbb Z}$ since $e^{i\xi^2}\widehat{u_1}(\xi)\in H^{\frac 32^+}$, and also persistance of this regularity in the limit sequence $\{\alpha_k\}_{k\in\mathbb Z}$. Then we use Theorem \ref{thevpoly} for this sequence $\{\alpha_k\}_{k\in\mathbb Z}\in l^{2,\frac 32^+}$ to obtain the solutions of the binormal flow $\chi(t)$ in Theorem \ref{thcritbf}.

\subsubsection{Sketch of the proof of Theorem \ref{thblup}}
Applying the pseudo-conformal transformation to \eqref{CubicNLS} and to the ansatz \eqref{superp} we look for periodic solutions on $[1,\infty)$ of \eqref{NLSt} with data in $H^r(0,2\pi)$ at $t=1$. 
Thus the modulated Fourier coefficients:
$$B_k(t):=\mathcal F(e^{-it\Delta}v(t))(k),$$
which identifies to $\overline{A_k}(\frac 1t)$, must solve:
$$i\partial_t B_k(t)=\frac{1}{ t}\sum_{k-j_1+j_2-j_3=0}e^{-it(k^2-j_1^2+j_2^2-j_3^2)}B_{j_1}(t)\overline{B_{j_2}(t)}B_{j_3}(t).$$

Let $b>\frac 12$. Following Bourgain's classical approach for 
$$e^{-it\Delta}v(t)(x)=\sum_k B_k(t)e^{ikx},$$
we obtain $H^r$ solutions $v(t)$ of \eqref{NLSt} on $[1,\infty)$ satisfying for any $\nu\in\mathbb N^*, t\in(\nu,\nu+1)$:
$$\|\{B_k(t)\}\|_{l^{2,r}}=\|v(t)\|_{H^r}\leq C\|v\|_{X^{r,b}_\nu}\leq C \|v(\nu)\|_{H^r}=C\|\{B_k(\nu)\}\|_{l^{2,r}},$$
where the $X^{r,b}_\nu$-norm is defined via a smooth localization at $\nu$ of $B$:
$$\|v\|_{X_\nu^{r,b}}:=\left(\int \sum_k \langle k\rangle^{2r} \langle \lambda\rangle^{2b} |\widehat{B}_{k,\nu}(\lambda)|^2d\lambda\right)^\frac 12.$$
Also, the mass is preserved: 
$$M:=\|\{B_k(t)\}\|_{l^{2}}^2=\|v(t)\|_{L^2(0,2\pi)}^2=\|v(1)\|_{L^2(0,2\pi)}^2=\|e^{i\cdot^2}\hat u_1(\cdot)\|_{L^2(0,2\pi)}^2.$$

Now that the solution $v$ is constructed on $[1,\infty)$ we look for further large time properties of $\{B_k(t)\}$. As a first result we obtain a uniform in $k$ pointwise decay for the Fourier in time transform of $\partial_t B_k(t)$. 
\begin{lemma}(Control in time of $\partial_t B_k(t)$)\label{bounddert}
\, For any $\nu\in\mathbb N^*$, and $\eta_\nu$ a smooth cutoff supported in $[\nu,\nu+1]$, we have for any  $\epsilon>0$ the following estimate on the Fourier transform in time:
$$\sup_{k\in \mathbb Z}\sup_{\lambda\in\mathbb R} |\mathcal F(\eta_\nu(\cdot)\partial_t B_k(\cdot))(\lambda)|\leq \frac {C(\epsilon,\|\{B_j(1)\}\|_{ l^{2}}\|\xi^{1^+}\hat\eta_\nu(\xi)\|_{L^\infty})}\nu   \|\{B_j(\nu)\}\|_{l^{2,\epsilon}}.$$
\end{lemma}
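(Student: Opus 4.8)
The plan is to substitute the evolution equation for $\{B_k(t)\}$ into the time integral defining $\mathcal F(\eta_\nu\partial_t B_k)(\lambda)=\int e^{-it\lambda}\eta_\nu(t)\partial_t B_k(t)\,dt$ and to extract a gain from the oscillation of the non-resonant phases $e^{-it\Omega}$, $\Omega=k^2-j_1^2+j_2^2-j_3^2$, rather than bounding $|\partial_t B_k|$ pointwise in $t$. This is forced on us: a crude bound $|\mathcal F(\eta_\nu\partial_t B_k)(\lambda)|\le\|\eta_\nu\partial_t B_k\|_{L^1_t}\lesssim\frac1\nu\sup_t|\mathcal F_x(|v|^2v)_k|$ would require $l^{2,s}$ to be an algebra, which holds only for $s>\frac12$ and would wreck the $l^{2,\epsilon}$-level claim. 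First I would separate the resonant and non-resonant interactions. On $Res_k=\{(k,j,j),(j,j,k)\}$ the contribution to $\partial_t B_k$ is $\frac{-i}{t}B_k(t)\big(2M-|B_k(t)|^2\big)$ with $M=\|\{B_j(1)\}\|_{l^2}^2$ the conserved mass; its time-Fourier transform is controlled by its $L^1_t([\nu,\nu+1])$ norm, which is $\lesssim\frac1\nu\sup_{[\nu,\nu+1]}|B_k|\,(2M+\sup|B_k|^2)$. Using mass conservation and the a priori bound $\|\{B_k(t)\}\|_{l^{2}}\le C\|\{B_k(\nu)\}\|_{l^{2}}$ from the local theory, this is $\le\frac{C(M)}\nu\|\{B_j(\nu)\}\|_{l^{2}}\le\frac{C(M)}\nu\|\{B_j(\nu)\}\|_{l^{2,\epsilon}}$, as wanted.

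For the non-resonant part I would use the elementary identity $\Omega=2(k-j_1)(k-j_3)$ valid on the constraint $k-j_1+j_2-j_3=0$, together with the change of variables $m=k-j_1$, $n=k-j_3$, so that $j_2=k-m-n$, $\Omega=2mn$, and $(m,n)\neq(0,0)$ with $m,n\neq0$ describing $NR_k$. By the Fourier shift property each summand equals $\mathcal F\big(\tfrac{\eta_\nu}{t}B_{j_1}\overline{B_{j_2}}B_{j_3}\big)(\lambda+2mn)$. Absorbing $\tfrac{\nu}{t}\eta_\nu$ into a smooth cutoff $\tilde\eta_\nu$ supported on an $O(1)$ interval near $\nu$, whose transform still satisfies $|\hat{\tilde\eta}_\nu(\xi)|\lesssim\|\xi^{1^+}\hat\eta_\nu\|_{L^\infty}\langle\xi\rangle^{-1-}$, and replacing each $B_j$ by its time-localization with transform $C_j=\widehat B_{j,\nu}$ (the very quantity appearing in the $X_\nu^{r,b}$-norm), the term becomes a shifted quadruple convolution $\tfrac1\nu\big[\hat{\tilde\eta}_\nu*C_{j_1}*\widetilde C_{j_2}*C_{j_3}\big](\lambda+2mn)$, where $\widetilde C_{j_2}(\mu)=\overline{C_{j_2}(-\mu)}$.

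The estimate then reduces to summing these over $m,n\neq0$, uniformly in $k$ and $\lambda$, which I would split according to $|\lambda+2mn|\le1$ and $|\lambda+2mn|>1$. In the first regime the number of admissible $(m,n)$ is a divisor-type count $\lesssim_\epsilon\langle\lambda\rangle^\epsilon$, matched exactly by the $l^{2,\epsilon}$ weight; each term is bounded directly, assigning two of the three factors to the conserved mass $M$ and the third to the $l^{2,\epsilon}$ norm via Cauchy–Schwarz, using that the $X_\nu^{r,b}$-control with $b>\frac12$ gives $\|C_j\|_{L^1_\lambda}\lesssim\|\psi_\nu B_j\|_{H^b_t}$. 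In the second regime I would integrate by parts once in $t$ against $e^{-it(\lambda+2mn)}$, gaining $\langle\lambda+2mn\rangle^{-1}$ which, combined with the rapid decay of $\hat{\tilde\eta}_\nu$ and the $H^b_\lambda$-decay of the $C_j$ encoded in $X_\nu^{r,b}$, should make the double sum converge; keeping the mass on two factors again renders the bound linear in $\|\{B_j(\nu)\}\|_{l^{2,\epsilon}}$.

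The main obstacle is precisely this two-variable summation against the divisor-type phase $2mn$. Unlike the one-dimensional non-resonant sums used in the proof of Theorem \ref{thNLScrit}, a single gain $\langle\lambda+2mn\rangle^{-1}$ is borderline non-summable over the two free indices $m,n$, so convergence must be squeezed out simultaneously from the factorization $\Omega=2(k-j_1)(k-j_3)$, the $\langle\cdot\rangle^{-1-}$ decay of $\hat\eta_\nu$ (whence the hypothesis $\|\xi^{1^+}\hat\eta_\nu\|_{L^\infty}<\infty$ enters the constant), the extra $\epsilon$ of regularity that bounds the divisor counts near the resonant window, and mass conservation that keeps two factors absorbed into $C(M)$ so the final estimate stays linear in the $l^{2,\epsilon}$ norm.
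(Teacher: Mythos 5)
Your skeleton matches the paper's: split off the resonant part (handled by mass conservation and the explicit $1/t$), pass to the time-Fourier side with the cutoff $\eta_\nu$, exploit the factorization $k^2-j_1^2+j_2^2-j_3^2=2(k-j_1)(j_1-j_2)$, and close with the $X^{r,b}_\nu$ structure and a divisor count absorbed by the $l^{2,\epsilon}$ weight. But the decisive step --- actually summing the nonresonant interactions uniformly in $k$ and $\lambda$ --- is exactly where your argument stops, and your own diagnosis (``a single gain $\langle\lambda+2mn\rangle^{-1}$ is borderline non-summable over the two free indices $m,n$'') is a correct criticism of your setup: keeping $m=k-j_1$ and $n=k-j_3$ as two independent summation variables and trying to gain only from $\langle\lambda+2mn\rangle^{-1}$ does not converge, and the proposed rescue by integrating by parts in $t$ reintroduces $\partial_t B_j$ through the product rule, so it does not close the estimate.

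The paper resolves this by summing over the \emph{value} $m=(k-j_1)(j_1-j_2)\in\mathbb Z^*$ of the product, i.e.\ over the level sets $\Lambda_{k,m}$, rather than over the two factors separately. Three things then happen. First, the multiplicity of each level set is a divisor count, $\#\Lambda_{k,m}\le C(\epsilon)|m|^\epsilon$, and --- this is the inequality your ``matched exactly by the $l^{2,\epsilon}$ weight'' glosses over --- $|m|^\epsilon=|k-j_1|^\epsilon|j_1-j_2|^\epsilon\le C(\epsilon)\max\{\langle j_1\rangle,\langle j_2\rangle,\langle k-j_1+j_2\rangle\}^{2\epsilon}$, which converts the count into an index weight that the single $l^{2,\epsilon}$ factor absorbs after Cauchy--Schwarz in $(j_1,j_2)$. (Your version produces a factor $\langle\lambda\rangle^\epsilon$, which is not uniformly bounded in $\lambda$ unless you perform this same conversion back onto the $j$'s.) Second, after writing the term as the convolution $\widehat{\tilde\eta}_\nu(\lambda-\lambda_1+\lambda_2-\lambda_3+m)\,\hat B_{j_1,\nu}(\lambda_1)\widehat{\overline B}_{j_2,\nu}(\lambda_2)\hat B_{j_3,\nu}(\lambda_3)$, the sum over the single index $m$ converges by Cauchy--Schwarz using only the $\langle\cdot\rangle^{-1^-}$ decay of $\widehat{\tilde\eta}_\nu$; this is precisely why the hypothesis is $\|\xi^{1^+}\hat\eta_\nu(\xi)\|_{L^\infty}<\infty$ and why no integration by parts in $t$ is needed. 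Third, the $\lambda_i$ integrals are closed by Cauchy--Schwarz using the integrability of $\langle\lambda_i\rangle^{-2b}$ with $b>\frac12$, i.e.\ the $X^{r,b}_\nu$ bound from the local theory. Your treatment of the resonant term and your overall bookkeeping (two factors to the conserved mass, one to $l^{2,\epsilon}$) are fine; it is the one-variable reorganization of the nonresonant sum that is missing and that makes the lemma go through.
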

\noindent
\begin{proof} We have
$$\mathcal F(\eta_\nu(\cdot)\partial_t B_k(\cdot))(\lambda)=\int e^{it\lambda}\eta_\nu(t)\frac 1t\sum_{NR_k}e^{it(k^2-j_1^2+j_2^2-j_3^2)}(B_{j_1}\overline{B}_{j_2}B_{j_3})(t)dt$$
$$+\int e^{it\lambda}\eta_\nu(t)\frac 1t\left(2M-|B_k(t)|^2\right)B_k(t)dt.$$
Recall that $NR_k$ denotes the noresonant set:
$$NR_k:=\{(j_1,j_2,j_3),\, k-j_1+j_2-j_3=0,\, k^2-j_1^2+j_2^2-j_3^2\neq 0\}.$$
The second term can be estimated straightforwardly to get the $\frac 1\nu$ decay. In the first term we split the summation between the sets:
$$\Lambda_{k,m}=\{(j_1,j_2)\in\mathbb Z^2, (k-j_1)(j_1-j_2)=m\},\quad m\in\mathbb Z^*,$$
and we pass in Fourier in time to get:
$$\int e^{it\lambda}\eta_\nu(t)\frac 1t\sum_{NR_k}e^{it(k^2-j_1^2+j_2^2-j_3^2)}(B_{j_1}\overline{B}_{j_2}B_{j_3})(t)dt$$
$$=\int e^{it\lambda}\eta_\nu(t)\frac 1t\sum_m\sum_{j_1,j_2\in\Lambda_{k,m}}e^{itm}(B_{j_1}\overline{B}_{j_2}B_{k-j_1+j_2})(t)dt$$
$$=\frac 1\nu \int \sum_m\widehat{\tilde \eta}_\nu(\lambda-\lambda_1+\lambda_2-\lambda_3+m) \sum_{j_1,j_2\in\Lambda_{k,m}}\hat B_{j_1,\nu}(\lambda_1)\widehat{\overline{ B}}_{j_2,\nu}(\lambda_2)\hat B_{k-j_1+j_2,\nu}(\lambda_3)d\lambda_1d\lambda_2d\lambda_3.$$
To end the proof of the lemma via the $X^{s,b}_\nu$-norms we perform Cauchy-Schwarz successively:
\begin{itemize}
\item in $j_1,j_2$ using that for all $j_1,j_2\in\Lambda_{k,m}$:
$$\# \Lambda_{k,m}\leq C(\epsilon)m^\epsilon=C(\epsilon)(k-j_1)^{\epsilon}(j_1-j_2)^\epsilon\leq C(\epsilon)\max \{\langle j_1\rangle,\langle j_2\rangle,\langle k-j_1+j_2\rangle\}^{2\epsilon},$$
\item  in $m$ using the decay of $\widehat{\tilde \eta}_\nu$,
\item in $\lambda_1,\lambda_2,\lambda_3$ using the integrability of $\frac 1{\lambda^{2b}}$.
\end{itemize}
\end{proof}

In a more complicated way we obtain the control asymptotically in time  of the low regularity $H^r$ norms, as follows. 
\begin{lemma}(Control in time of the weighted-norms)\label{lemmawev}
For any $t\geq 1, r \in(0,\frac 12)$ we have:
$$\|\{B_k(t)\}\|_{l^{2,r}}\leq C( r,\|\{B_k(1)\}\|_{ l^{2,r}}).$$
\end{lemma}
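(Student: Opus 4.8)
Weighted energy identity and the role of resonances.
The plan is to run a weighted energy estimate directly on $N_r(t)^2:=\|\{B_k(t)\}\|_{l^{2,r}}^2=\sum_k\langle k\rangle^{2r}|B_k(t)|^2$, where the $B_k$ solve
$i\partial_t B_k=\frac1t\sum_{k-j_1+j_2-j_3=0}e^{-it\phi}B_{j_1}\overline{B_{j_2}}B_{j_3}$ with $\phi=k^2-j_1^2+j_2^2-j_3^2$. The naive local bound $\|\{B_k(t)\}\|_{l^{2,r}}\le C\|\{B_k(\nu)\}\|_{l^{2,r}}$ on each unit interval would only yield exponential-in-$t$ growth after iteration, so the whole point is to exploit the oscillation of the nonresonant phases. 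First I would derive, exactly by the symmetrization used for the mass, the identity
\[\partial_tN_r(t)^2=\frac1{2t}\,\Im\sum_{NR}c_r\,e^{-it\phi}\,\overline{B_k}B_{j_1}\overline{B_{j_2}}B_{j_3},\qquad c_r:=\langle k\rangle^{2r}-\langle j_1\rangle^{2r}+\langle j_2\rangle^{2r}-\langle j_3\rangle^{2r},\]
where $\sum_{NR}$ runs over nonresonant quadruples. The resonant contributions drop out because $c_r$ vanishes on $Res_k=\{(k,j,j),(j,j,k)\}$ for every symmetric weight; this is also why the logarithmic phase loss of the $A_k(t)$ is invisible to $N_r$. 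On the nonresonant set the one-dimensional factorization $\phi=2(k-j_1)(k-j_3)\neq0$ gives $|\phi|\ge2$.

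Normal form and the integrable remainder.
The key algebraic step is a normal form (integration by parts in time) using $\phi\neq0$. Writing $P(t)=\overline{B_k}B_{j_1}\overline{B_{j_2}}B_{j_3}$ and using $\frac{e^{-it\phi}}{t}P=\partial_t\!\big(\frac{e^{-it\phi}}{-i\phi}\frac{P}{t}\big)+\frac{e^{-it\phi}}{-i\phi}\,\partial_t\!\big(\frac{P}{t}\big)$, I define a modified energy $E(t):=N_r(t)^2+\mathcal R(t)$, with the normal-form correction $\mathcal R(t)$ satisfying $|\mathcal R(t)|\le\frac1{2t}\sum_{NR}\frac{|c_r|}{|\phi|}|P(t)|$, chosen so that every surviving term in $\partial_tE$ carries a factor $t^{-2}$. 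Indeed $\partial_t(P/t)=P'/t-P/t^2$, and substituting the equation for $\partial_tB_\ell$ turns $P'/t$ into a sextic expression with a prefactor $t^{-2}$; thus $\partial_tE$ is a sum of quartic and sextic forms against $t^{-2}$ and the multiplier $c_r/\phi$. The gain $|\phi|^{-1}$ from the integration by parts is exactly what replaces the borderline $t^{-1}$ (which integrates to a logarithm) by the integrable $t^{-2}$.

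The multilinear estimate (the main obstacle).
The crux is bounding the discrete multilinear sums $\sum_{NR}\frac{|c_r|}{|\phi|}|P|$ and their sextic analogues by $C(r,M)\,(1+N_r^2)$, where $M=\|\{B_k\}\|_{l^2}^2$ is the conserved mass, treated as a fixed constant. Here I would use that $c_r$ is a second finite difference: with $a=k-j_1$, $b=k-j_3$ one has $c_r=g(k)-g(k-a)+g(k-a-b)-g(k-b)$ for $g(\xi)=\langle\xi\rangle^{2r}$, so two mean value theorems give $|c_r|\lesssim|ab|\,\langle\eta\rangle^{2r-2}=\tfrac{|\phi|}{2}\langle\eta\rangle^{2r-2}$ for some $\eta$ in the range of the four frequencies, whence $\frac{|c_r|}{|\phi|}\lesssim\langle\eta\rangle^{2r-2}$ with $2r-2<-1$ precisely because $r<\frac12$. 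This decay is essential: at low regularity one cannot simply take absolute values (that would require $\{B_k\}\in l^1$, i.e. $r>\frac12$), so the oscillation must be kept. Organizing the sum by the level sets $\Lambda_{k,m}$ and using the divisor bound $\#\{(a,b):ab=m\}\lesssim_\epsilon m^\epsilon$ as in Lemma \ref{bounddert}, together with a case split according to whether the shifts $a,b$ are small or large relative to the frequencies (using the $\langle\eta\rangle^{2r-2}$ decay in the former regime and redistributing the weights $\langle\cdot\rangle^{r}$ onto the large frequencies in the latter), closes the estimate by Cauchy--Schwarz; the factors $\partial_tB_\ell$ appearing in the sextic term are controlled by the equation, or equivalently by Lemma \ref{bounddert}. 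I expect this case analysis in the discrete multilinear sum to be the genuinely delicate point.

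Conclusion by Gronwall with an integrable kernel.
Combining the previous steps gives $|E(t)-N_r(t)^2|\le\frac{C(r,M)}{t}(1+N_r(t)^2)$ and $|\partial_tE(t)|\le\frac{C(r,M)}{t^2}(1+N_r(t)^2)$. Choosing a base time $t_0=t_0(r,M)$ with $\frac{C(r,M)}{t_0}\le\frac12$ yields $\frac12N_r^2\le E\le 2N_r^2+C$ on $[t_0,\infty)$, and then $E(t)\le E(t_0)+C(r,M)\int_{t_0}^t s^{-2}(1+E(s))\,ds$, so Gronwall's inequality with the integrable kernel $s^{-2}$ produces a bound uniform in $t$, irrespective of the size of the conserved mass. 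Finally $N_r(t_0)$ is controlled by $N_r(1)$ through finitely many applications of the local bound $\|v(t)\|_{H^r}\le C\|v(\nu)\|_{H^r}$ on the unit intervals covering $[1,t_0]$, which closes the argument and gives $\|\{B_k(t)\}\|_{l^{2,r}}\le C(r,\|\{B_k(1)\}\|_{l^{2,r}})$ for all $t\ge1$.
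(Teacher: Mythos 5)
Your proposal follows essentially the same route as the paper's proof: the weighted energy identity whose multiplier $c_r=\langle k\rangle^{2r}-\langle j_1\rangle^{2r}+\langle j_2\rangle^{2r}-\langle j_3\rangle^{2r}$ kills the resonant set, integration by parts in time from the nonresonant phase producing the multiplier $c_r/\phi$ (your modified energy $E$ is just the paper's boundary term absorbed into the functional), substitution of the equation for $\partial_t B_k$ turning the remaining integral into a sextic sum with an integrable $\tau^{-2}$ factor, and a multilinear estimate on the quartic and sextic sums exploiting $r<\tfrac12$, the divisor bound on the level sets $\Lambda_{k,m}$, and the Fourier-in-time control of $\partial_t B_k$. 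The only differences are organizational (Gronwall with a modified energy versus direct estimation of the integrated identity), and both you and the paper leave the genuinely delicate multilinear estimate at the level of a sketch.
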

\begin{proof}We use the evolution of $l^{2,r}$-norms dictated by the equation of the $B_k$s:
$$\sum_k \langle k\rangle^{2r}|B_k(t)|^2-\sum_k \langle k\rangle^{2r}|B_k(1)|^2$$
$$=\int_1^t \sum_{k;NR_k}(\langle k\rangle ^{2r}-\langle j_1\rangle ^{2r}+\langle j_2\rangle ^{2r}-\langle j_3\rangle ^{2r})e^{i\tau (k^2-j_1^2+j_2^2-j_3^2)}B_{j_1}\overline{B_{j_2}}B_{j_3}\overline{B_{k}}(\tau)\frac{d\tau}{\tau},$$
by performing integrations by parts:
$$\sum_k \langle k\rangle^{2r}|B_k(t)|^2\leq\sum_k \langle k\rangle^{2r}|B_k(1)|^2+ C\left[\sum_{k;NR_k}|\varphi_{k,j_1,j_2,j_3}B_{j_1}\overline{B_{j_2}}B_{j_3}\overline{B_{k}}(\tau)\frac 1\tau|\right]_1^t$$
$$+C\left|\int_1^t \sum_{k;NR_k}\varphi_{k,j_1,j_2,j_3}e^{i\tau  (k^2-j_1^2+j_2^2-j_3^2)}B_{j_1}\overline{B_{j_2}}B_{j_3}\overline{\partial_{\tau} B_{k}}(\tau)\frac{d\tau}{\tau}\right|,$$
where 
$$\varphi_{k,j_1,j_2,j_3}:=\frac{\langle k\rangle ^{2r}-\langle j_1\rangle ^{2r}+\langle j_2\rangle ^{2r}-\langle j_3\rangle ^{2r}}{k^2-j_1^2+j_2^2-j_3^2}.$$
We prove that 
$$\sum_{k;(j_1,j_2,j_3)\in NR_k} |\varphi_{k,j_1,j_2,j_3}|N_{j_1}N_{j_2}N_{j_3}P_k\leq C(r,\|N\|_{l^2}) \|N\|_{l^{2,r}}^{0^+}\|P\|_{l^2},$$
for $r<\frac 12$ and any two sequences $\{N_j\}$ and $\{P_j\}$ of positive numbers. The estimates in the cases $r\geq \frac 12$ were obtained in Lemma 2.5 in \cite{ForlanoSeong2022}. This settles the boundary term. For the integral term we develop $\partial_\tau B_k$ using its equation, which yields a six-product term, then use a $\nu-$partition of unity in $\tau$, pass to Fourier in $\tau$, and split the $j$'s summation into different regions where we use different arguments to get the control of the Lemma.
\end{proof}
Now we shall identify the asymptotics of $B_k(t)$. We first obtain for all $k\in\mathbb Z$ a first modulated limit:
$$\exists\,\beta_k:=\lim_{t\rightarrow\infty}\tilde B_k(t),\quad |\tilde B_k(t)-\beta_k|\leq  \frac{C(\|\{B_j(1)\}\|_{ l^{2,r}})}{t},$$
where
$$\tilde B_k(t):=e^{i2M\log t-i\int_1^t|B_k(\tau)|^2\frac{d\tau}{\tau} }B_k(t).$$
To do so we integrate in time the evolution law of $\tilde B_k$, we use again integrations by parts, partition of unity in time, Fourier in time, splitting the nonresonant set $NR_k$ into the sets $\Lambda_{k,m}$, and the two previous lemmas. 
Then we obtain
 $$\exists\,\alpha_k:=\lim_{t\rightarrow\infty}e^{i(2M-|\beta_k|^2)\log t }B_k(t),$$
 $$|B_k(t)-\alpha_ke^{i(2M-|\alpha_k|^2)\log t }|\leq  \frac{C(\|\{B_j(1)\}\|_{ l^{2,s}})}{t}.$$


\section{Appendix: Passing from 1D cubic Schr\"odinger equation to binormal flow}\label{section-Hasimoto}
Hasimoto's initial transformation in 1972, recalled in \S 1.2, is based on the Frenet system of a curve:
$$\left(\begin{array}{c}
T\\n\\b
\end{array}\right)_x=
\left(\begin{array}{ccc}
0 & c & 0 \\ -c & 0 & \tau \\ 0 &  -\tau & 0 
\end{array}\right)
\left(\begin{array}{c}
T\\n\\b
\end{array}\right),$$
where $(T,n,b)$ is the orthonormal frame of the tangent, normal and binormal vectors, $c$ represents the curvature and $\tau$ the torsion (see the book of Spivak \cite{Spivak1970}). His construction is valid for curves with non-vanishing curvature. This constraint was removed in 1997 by Koiso in \cite{Koiso1997} by using instead the notion of parallel transport of frames. 
Before explaining in \S 7.0.2 how the construction works we start with the (time-independent) notion of parallel transport of frames.
  
\subsubsection{Parallel transport of frames and the filament functions of a curve.}\label{sectfil}
We call filament function of a curve $\chi$ a function $u_\chi$ obtained by the following parallel transport procedure. Denote the tangent vector $T=\partial_x\chi$. We consider the parallel frames $(T,e_1,e_2)(x)$ obtained by solving the ODEs: 
$$\partial_xe_1(x)=-\langle \partial_xT,e_1\rangle T,\quad  \partial_xe_2(x)=-\langle \partial_xT,e_2\rangle T,$$ 
with data at some $x_0\in\mathbb R$ given by an orthonormal frame of $\mathbb R^3$. This is always possible if the curve $\chi$ is regular enough, for instance if $\partial_x^2\chi\in L^2_{loc}$. Indeed, this gives global existence in $H^1_{loc}$ for the above ODEs. In addition, the orthonormal frame nature of $(T,e_1,e_2)(x)$ is preserved, since the matrix of the system of evolution in space of $(T,e_1,e_2)(x)$ is antisymmetric.  Then we define: 
$$u_\chi=\langle \partial_x T,e_1\rangle +i\langle \partial_x T,e_2\rangle.$$ 
The real and imaginary part of $u_\chi$ are a normal developement of the curve $\chi$ (see \cite{Bishop1975}). We called them also filament function by refeering to the notion introduced by Hasimoto \cite{Hasimoto1972} for curves $\chi$ with non-vanishing  curvature:
$$c(x)e^{i\int_0^x\tau(s)ds}.
$$
For curves $\chi$ with non-vanishing curvature this function coincides with $u_\chi$ and 
\begin{equation}\label{geo_g}
(e_1+ie_2)(x)=(n+ib)(x)e^{i\int_{x_0}^x \tau(s)ds},
\end{equation}
if in the construction of $u_\chi$ the initial orthonormal frame $(T,e_1,e_2)(x_0)$ is chosen to be the Frenet frame $(T,n,b)(x_0)$. 
Observe that even if the curvature vanishes, the expression in the right-hand side of \eqref{geo_g} continue to make sense via the parallel frame construction.

We note that the only degree of freedom in the filament function $u_\chi$ construction is rotating the initial data $(T,e_1,e_2)(x_0)$, i.e. rotating $(e_1,e_2)$ in the plane orthogonal to $T(x_0)$, which yields by this construction another filament function that is of type $u_\chi(x)e^{i\theta}$ (and changing $x_0$ boils down to the same argument).

We also note that $u_\chi$ is constructed exclusively from $T=\partial_x\chi$ so it does not depend on translations in space of $\chi$, i.e. it does not depend on $\chi(x_0)$. Moreover, $u_\chi$ is uniquely  determined by $T(x_0)=\partial_x\chi(x_0)$ modulo multiplication by $e^{i\theta}$. Therefore if two curves $\chi,\tilde\chi$ satisfy $\chi(x_0)=\tilde \chi(x_0), \partial_x\chi(x_0)=\partial_x\chi(x_0)$ and $u_\chi=u_{\tilde\chi}$, then $\chi=\tilde\chi$. 

Eventually the ODE satisfied by the frame is:
\begin{equation}\label{parallel}
\left( 
\begin{array}{l}
T\\e_1\\e_2
\end{array}
\right)_x= \left( 
\begin{array}{lll}
0  & - {\rm Im\,} u_\chi & {\rm Re\,} u_\chi
  \\
   {\rm Im\,}  u_\chi & 0  & 0
  \\
 - {\rm Re\,} u_\chi  &    0  & 0
\end{array}
\right)\left( 
\begin{array}{l}
T\\e_1\\e_2
\end{array}
\right).
\end{equation}

\medskip

\subsubsection{The Hasimoto approach from NLS to the binormal flow}\label{sectHas}
We recall here the classical Hasimoto approach to construct binormal flow solutions by using sufficiently regular solutions of the cubic Schr\"odinger equation \eqref{Hasimoto}: $$
iu_t+u_{xx}+ (|u|^2-f)u=0, 
$$
for $f$ a space-independent function, as for instance the null function. Let $\mathcal B$ be an orthonormal basis of $\mathbb R^3$, $x_0\in\mathbb R$ and $P \in \mathbb R^3$. 
Let us assume that we have a smooth solution $u$ of \eqref{Hasimoto} on an open time interval~$I$, and let $t_0\in I $.  
Starting from $u$, the first step will be to construct orthonormal frames $(T,e_1,e_2)(t,x)$ such that the first vector $T$ is a solution of the Schr\"odinger map.  
In order to construct these frames we solve the ODE in time:
\begin{equation}\label{ODEtrace}
\left( 
\begin{array}{l}
T\\e_1\\e_2
\end{array}
\right)_t(t,x_0)= \left( 
\begin{array}{lll}
0  & - {\rm Im\,} u_x & {\rm Re\,} u_x
  \\
   {\rm Im\,}  u_x  & 0  & - \frac{|u|^2-f}{2} 
  \\
 - {\rm Re\,} u_x  &     \frac{|u|^2-f}{2}  & 0
\end{array}
\right)\left( 
\begin{array}{l}
T\\e_1\\e_2
\end{array}
\right)(t,x_0),\qquad \forall t \in I,
\end{equation}
with initial condition $\mathcal B$ at time $t_0$. The orthonormal frame character is preserved in the evolution due to the fact that the matrix of the system is antisymmetric. Then, for all $t \in I$, we solve the family of ODEs in space:
\begin{equation}\label{ODEx}
\left( 
\begin{array}{l}
T\\e_1\\e_2
\end{array}
\right)_x(t,x)= \left( 
\begin{array}{lll}
0  & - {\rm Im\,} u & {\rm Re\,} u
  \\
   {\rm Im\,}  u & 0  & 0
  \\
 - {\rm Re\,} u  &    0  & 0
\end{array}
\right)\left( 
\begin{array}{l}
T\\e_1\\e_2
\end{array}
\right)(t,x),
 \qquad  \forall x \in \mathbb R,
\end{equation}
with initial condition the frame $(T,e_1,e_2)(t,x_0)$ at the point $x_0$. Using the fact that $u$ is a solution of equation \eqref{Hasimoto} one can prove\footnote{Since $(T,e_1,e_2)(t,s)$ are orthonormal frames we can write for all $t$
$$\left(\begin{array}{c}
T\\e_1\\e_2
\end{array}\right)_t(t,x)=
\left(\begin{array}{ccc}
0 & a & b \\ -a & 0 & c \\ -b &  -c & 0 
\end{array}\right)
\left(\begin{array}{c}
T\\e_1\\e_2
\end{array}\right)(t,x).$$
We also notice that $(a,b,c)(t,x_0)=(-\Im u_x,\Re u_x, -\frac{|u|^2-f}{2})(t,x_0)$. By computing 
and identifying $T_{ts}=T_{st},  e_{1ts}=e_{1st}$ we get a system for $(a,b,c)$ in terms of $u$, that together with the fact that $u$ satisfies \eqref{Hasimoto} allows for identifying $(a,b,c)(t,x)=(-\Im u_x,\Re u_x, -\frac{|u|^2-f}{2})(t,x)$.} that, at least for regular $u$, 
 the ODE \eqref{ODEtrace} is actually valid at any $x \in \mathbb R$:
 \begin{equation}\label{ODEt}
\left( 
\begin{array}{l}
T\\e_1\\e_2
\end{array}
\right)_t(t,x)= \left( 
\begin{array}{lll}
0  & - {\rm Im\,} u_x & {\rm Re\,} u_x
  \\
   {\rm Im\,}  u_x  & 0  & - \frac{|u|^2-f}{2} 
  \\
 - {\rm Re\,} u_x  &     \frac{|u|^2-f}{2}  & 0
\end{array}
\right)\left( 
\begin{array}{l}
T\\e_1\\e_2
\end{array}
\right)(t,x),\qquad  \forall t \in I, x\in\mathbb R.
\end{equation}

It is then easy to see that $T$ solves the Schr\"odinger map equation. Indeed from \eqref{ODEt} and \eqref{ODEx} we obtain:
$$
T \wedge  T_{xx} = T \wedge\left(  {\rm Re\,} u\, e_1 + {\rm Im\,}  u\, e_2 \right) _x= T \wedge  \left(  {\rm Re\,} u_x\, e_1 + {\rm Im\,}  u_x\, e_2 +  {\rm Re\,} u\, e_{1x} + {\rm Im\,}  u\, e_{2x} \right)$$
$$ =     {\rm Re\,} u_x\, e_2 - {\rm Im\,}  u_x\, e_1=T_t.
$$
Moreover, one can easily check
 that $\chi$ defined 
as
\begin{equation}\label{ChiDefinitionVeryFirst}
\chi(t,x): = P + \int_{t_0}^{t} (T \wedge T_{x})(s,x_0) ds + \int_{x_0}^x  T(t, y) dy, 
\end{equation}
is a solution of the binormal flow equation, 
and that $T$ is its tangent vector.  
Indeed, simply take the time derivative of \eqref{ChiDefinitionVeryFirst} and use the divergence form $T_t=(T\wedge T_x)_x$. Finally let us note that $u(t)$ is a filament function of $\chi(t)$.

By using the complex normal vector $N:=e_{1}+ie_{2}$ the equations write in a shorter way:
\begin{equation}\label{ev}
T_x=\Re{\overline{u}}N,\quad N_x=-uT,\quad T_t=\Im\overline{u_x}N,\quad N_t=-iu_xT+i\frac{|u|^2-f}{2}N,\quad \chi_t=\Im \overline{u}N.
\end{equation}
In particular, if $u$ is solution of \eqref{Hasimoto} that yields $(T,N)$ by the above construction, then  $u(t,x)e^{i\Phi(t)}$ is a solution of \eqref{Hasimoto} with $f-\Phi$ instead of $f$ that yields $(T,Ne^{i\Phi(t)})$ by the above construction, thus the same first vector, thus the same binormal flow solution.

To conclude, let us give some examples of known, rigorously or through experiments, vortex filament smooth dynamics that correspond to a binormal flow solution, and let us give also the solutions of 1D cubic Schr\"odinger equation \eqref{CubicNLS} from which these binormal flow solutions are constructed via Hasimoto's approach: \\
$\bullet$ Straight still lines are parametrized by $\chi(t,x)=(0,0,x)$ and correspond to $u(t,x)=0$,\smallskip\\ 
$\bullet$  Circles translating in the binormal direction (smoke rings) $\chi(t,s)=(\cos s,\sin s,t)$ are obtained from $u(t,x)=e^{-it}$,\smallskip\\ 
$\bullet$  Helices with constant torsion translating on themselves are obtained from $u(t,x)=e^{-it-it\alpha^2+i\alpha x},\alpha\in\mathbb R$,\smallskip\\ 
$\bullet$  Travelling waves solutions of the binormal flow were obtained by Hasimoto from $u(t,x)=e^{-it}e^{-it\alpha^2} e^{i\alpha x}\, \frac{1}{2\sqrt{2}\cosh(x-2\alpha t)}, \alpha\in\mathbb R$.

\bibliographystyle{plain}

\bibliography{biblio}

\end{document}